\theoremstyle{plain}
\newtheorem{theorem}{Theorem}[section]
\newtheorem{lemma}[theorem]{Lemma}
\newtheorem{cor}[theorem]{Corollary}
\newtheorem{prop}[theorem]{Proposition}
\newtheorem{question}[theorem]{Question}
\newtheorem{addendum}[theorem]{Addendum}
\newtheorem{conjecture}[theorem]{Conjecture}
\newtheorem*{lemma*}{Lemma}
\newtheorem*{cor*}{Corollary}
\newtheorem*{theorem*}{Theorem}
\theoremstyle{definition}
\newtheorem{definition}[theorem]{Definition}
\newtheorem{assumption}[theorem]{Assumption}
\newtheorem{notation}[theorem]{Notation}
\newtheorem{remark}[theorem]{Remark}
\theoremstyle{remark}
\let\oldproofname=\proofname
\renewcommand{\proofname}{\rm\bf{\oldproofname}}
\newcommand{\R}{\mathbb R}
\newcommand{\Z}{\mathbb Z}
\newcommand{\C}{\mathbb C}
\newcommand{\N}{\mathbb N}
\newcommand{\phii}{\varphi}
\newcommand{\op}[1]{\operatorname{{#1}}}
\newcommand{\mc}[1]{\mathcal{{#1}}}
\newcommand{\Hom}{\op{Hom}}
\newcommand{\gprod}[3]{\left( #1 | #2 \right)_{#3} }
\newcommand{\acts}{\curvearrowright}
\newcommand{\PD}[1]{PD$(#1)$}
\newcommand{\co}{\colon}
\newcommand{\leftQ}[2]{\left.\raisebox{-.2em}{$#2$}\middle\backslash\raisebox{.2em}{$#1$}\right.}
\newcommand{\supp}{\op{supp}}
\begin{document}
    \title{Cohomology and the Bowditch boundary}

    \author{Jason F. Manning}
\address{Department of Mathematics, 310 Malott Hall, Cornell University, Ithaca, NY 14853}
    \email{jfmanning@math.cornell.edu}
    \author{Oliver H. Wang}
    \address{Department of Mathematics, University of Chicago, Chicago, IL 60637}
    \email{oliverwang@uchicago.edu}
    \begin{abstract}
      We give a group cohomological description of the \v{C}ech cohomology of the Bowditch boundary of a relatively hyperbolic group pair, generalizing a result of Bestvina--Mess about hyperbolic groups.  In case of a relatively hyperbolic Poincar\'e duality group pair, we show the Bowditch boundary is a homology manifold.  For a three-dimensional Poincar\'e duality pair, we recover the theorem of Tshishiku--Walsh stating that the boundary is homeomorphic to a two-sphere.
    \end{abstract}
\thanks{This project was supported by the National Science Foundation, grant DMS-1462263, and by a grant from the Simons Foundation (524176, JFM)}
    \maketitle

\section{Introduction}
  In Gromov's influential essay on hyperbolic groups \cite{Gromov87}, he introduced a compactification, now called the \emph{Gromov boundary} and initiated a study of the dynamics of a hyperbolic group on its boundary.  Later, Bowditch showed that boundaries of hyperbolic groups are completely characterized by this dynamical structure, which mirrors the dynamics of a convex cocompact Kleinian group \cite{Bowditch98}.  Bestvina and Mess in \cite{BM91} showed that this boundary also contains \emph{algebraic} information about the group, by showing, for any hyperbolic group $G$, any $k\geq 1$ and any ring $A$, there is an isomorphism
\[H^k(G;AG)\cong\check{H}^{k-1}(\partial G;A)\]
between the group cohomology of $G$ and the \v{C}ech cohomology of its boundary.  (Throughout this paper \v{C}ech cohomology is reduced, and $A$ is some fixed  ring.)
Note there is a natural map from $\partial G$ to the space of ends of a hyperbolic group, collapsing components of $\partial G$ to points, so this is consistent with the isomorphism $H^1(G;\Z G) \cong \check{H}^0(\mathrm{Ends}(G);\Z)$, as described, for example in \cite[Chapter 13]{Geoghegan}.  

  Gromov also introduced the idea of a \emph{relatively hyperbolic} group pair in \cite{Gromov87}, which attracted little attention until the work of Farb in the mid-90s \cite{Farb98}.\footnote{In that work a relatively hyperbolic pair is called a \emph{strongly} relatively hyperbolic pair.}   In Bowditch's 1999 preprint, published as \cite{Bowditch12}, a boundary was described for such a pair.  The dynamics of a relatively hyperbolic group pair acting on this boundary mirror the dynamics of a geometrically finite Kleinian group acting on its limit set.  Yaman proved an analogue of Bowditch's result about hyperbolic groups, that a group action on a metrizable compactum satisfying certain dynamical criteria must be relatively hyperbolic, and the compactum must be equivariantly homeomorphic to the Bowditch boundary \cite{Yaman04}.  In this paper we show an analogue of Bestvina and Mess's result in the relatively hyperbolic setting.  
  \begin{restatable}{theorem}{maintheorem}\label{thm:maintheorem}
    If $(G,\mathcal{P})$ is relatively hyperbolic and type $F_\infty$, then for every $k$, there is an isomorphism of $AG$-modules
\begin{equation}\label{mainisomorphism} H^k(G,\mathcal{P};AG)\to \check{H}^{k-1}(\partial(G,\mathcal{P});A). 
\end{equation}
  \end{restatable}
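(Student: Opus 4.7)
I would follow the strategy of Bestvina--Mess, replacing the Rips complex by a Groves--Manning cusped space. Concretely, let $X$ be a contractible, locally compact, $\delta$-hyperbolic $G$-CW complex built from a cocompact contractible $G$-complex $Y$ by equivariantly attaching combinatorial horoballs $\mathcal{H}(gP)$ along peripheral cosets $gP$; the hypothesis that $(G,\mathcal{P})$ is type $F_\infty$ allows us to thicken in every dimension so that $X$ has the correct homotopical properties. By construction, the Gromov boundary of $X$ is equivariantly homeomorphic to $\partial(G,\mathcal{P})$, so we obtain a compactification $\bar{X}=X\cup\partial(G,\mathcal{P})$.

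The proof then splits into two halves, both modeled on Bestvina--Mess. The first half establishes
\[
\check{H}^{k-1}(\partial(G,\mathcal{P});A)\ \cong\ H^k_c(X;A),
\]
by proving that $\bar{X}$ is a ``good'' compactification: $\partial(G,\mathcal{P})$ is a $Z$-set in $\bar{X}$, $\bar{X}$ is metrizable and contractible, and has finite cohomological dimension in each degree relevant to the argument. Given this, the long exact sequence of the pair $(\bar{X},\partial(G,\mathcal{P}))$, combined with the fact that $\bar{X}$ behaves like an AR, immediately yields the desired isomorphism.

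The second half identifies $H^k_c(X;A)$ with the relative pair cohomology $H^k(G,\mathcal{P};AG)$. Since $G$ acts properly but not cocompactly on $X$, the classical identification $H^*_c(X;A)\cong H^*(G;AG)$ does not apply directly. Instead, I would exploit the decomposition $X=Y\cup\bigsqcup_{gP}\mathcal{H}(gP)$: the cocompact piece $Y$ gives the absolute cohomology $H^*(G;AG)$, while each horoball $\mathcal{H}(gP)$ is contractible with boundary ``at infinity'' a single parabolic point, so that $H^{*}_c$ of a horoball is computed by the cohomology of the peripheral $P$ shifted by one. Threading these into a Mayer--Vietoris / mapping cone argument realizes the short exact sequence defining $H^*(G,\mathcal{P};-)$ at the chain level and yields the identification.

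The main obstacle will be verifying that $\bar{X}$ is a good compactification near the parabolic points. At a parabolic point $p\in\partial(G,\mathcal{P})$, an entire horoball $\mathcal{H}(gP)$ collapses to $p$, so a neighborhood of $p$ in $\bar{X}$ contains the cone on a horosphere; one must show that this collapse is tame enough to give the $Z$-set condition and local contractibility needed for the long exact sequence argument. This is exactly where the type $F_\infty$ hypothesis on the peripheral subgroups enters in an essential way, letting us approximate the horospheres by finite-dimensional cocompact pieces and verify the relevant local conditions. Once this local picture at parabolic points is controlled, the Bestvina--Mess template and the chain-level identifications of the second half combine to yield the stated isomorphism.
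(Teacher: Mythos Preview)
Your overall strategy is correct and matches the paper's: build a hyperbolic cusped space whose Gromov boundary is $\partial(G,\mathcal{P})$, identify $H^k_c$ of that space with $H^k(G,\mathcal{P};AG)$, and then use a $Z$-set-type argument to identify $H^k_c$ with $\check{H}^{k-1}$ of the boundary. The second half of your plan (the Mayer--Vietoris/mapping-cone identification of $H^k_c(X;A)$ with $H^k(G,\mathcal{P};AG)$) is essentially what the paper does in Proposition~\ref{prop:topiso}.

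There is, however, a genuine gap in the first half. You ask for a \emph{cocompact contractible} $G$-complex $Y$, and hence a contractible $X$, and you want $\partial(G,\mathcal{P})$ to be an honest $Z$-set in $\bar X$. But a cocompact free contractible $G$-complex exists exactly when $G$ is type $F$; under type $F_\infty$ alone, any contractible model may be infinite-dimensional, and then the Bestvina--Mess criterion for a $Z$-set (their Proposition~2.1) is unavailable, since it requires $\dim\bar X<\infty$. Your phrase ``thicken in every dimension'' does not resolve this: thickening to contractibility forces infinite dimension, while stopping at a finite skeleton loses contractibility. The difficulty is not localized at parabolic points as you suggest---it is global.

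The paper's resolution is precisely to abandon a single contractible space. For each $N$ it builds a \emph{finite-dimensional, $N$-connected} cusped space $\mathcal{X}(N)$ from the $(N{+}1)$-skeleta of classifying spaces (Definition~\ref{def:curlyX}), shows that $\partial(G,\mathcal{P})$ is only a $Z_{N-1}$-set in $\overline{\mathcal{X}(N)}$ (Corollary~\ref{cor:weak Zset}), and proves directly that $\check{H}^k(\overline{\mathcal{X}(N)};A)=0$ for $k\le N$ (Lemma~\ref{lem:vanish}) by an $HLC^N$ argument. This weaker package is enough to run the long-exact-sequence step in degrees $\le N$, and since $N$ is arbitrary one gets the isomorphism for every $k$. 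If you restrict your argument to the type $F$ case it goes through essentially as written (and the paper does exactly this in Theorem~\ref{thm:zset}); for the full $F_\infty$ statement you need this ``one $N$ at a time'' refinement.
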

 We recall Bieri and Eckmann's definition of relative group cohomology \cite{BE78} in Section \ref{ss:relative cohomology}.
  A group is \emph{type $F$ (resp. type $F_\infty$)} if it admits an Eilenberg-MacLane classifying space with finitely many cells (resp. finitely many cells in each dimension).  A pair $(G,\mc{P})$ is type $F$ or $F_\infty$ if both $G$ and every $P\in \mc{P}$ are.

  Theorem \ref{thm:maintheorem} implies that in some sense all the high-dimensional group cohomology of $G$ comes from its peripheral groups $\mc{P}$ (see Corollary \ref{cor:highdcohomisperipheral}).

  Theorem \ref{thm:maintheorem} was previously obtained by Kapovich in case $G$ is geometrically finite Kleinian and $\mc{P}$ is the collection of maximal parabolic subgroups, up to conjugacy in $G$ \cite{K09}.  In that case the Bowditch boundary $\partial(G,\mc{P})$ is equivariantly homeomorphic to the limit set $\Lambda(G)$.  

  \begin{question}
    Do the isomorphisms \eqref{mainisomorphism} hold without the assumption that the pair is $F_\infty$?
  \end{question}

  Poincar\'e duality group pairs are of particular interest.  In this context, we show the following analogue of \cite[Theorem 2.8]{B96}.
\begin{restatable}{theorem}{pdpair}\label{thm:pdpair}
  Suppose $(G,\mc{P})$ is relatively hyperbolic and type $F$.  The following are equivalent:
  \begin{enumerate}
  \item $(G,\mc{P})$ is a \PD{n} pair.
  \item $\partial(G,\mathcal{P})$ is a homology $(n-1)$--manifold and an integral \v{C}ech cohomology $(n-1)$--sphere.
  \end{enumerate}
\end{restatable}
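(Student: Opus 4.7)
The plan is to use Theorem \ref{thm:maintheorem} to translate between the group-cohomological PD$(n)$ condition and the global Čech cohomology of $\partial(G,\mc{P})$, supplemented by a local refinement of that theorem controlling the homology-manifold condition pointwise.

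For (1) $\Rightarrow$ (2), the global Čech cohomology sphere property follows immediately from \eqref{mainisomorphism} with $A=\Z$: by PD$(n)$, $H^k(G,\mc{P};\Z G)$ vanishes except in degree $n$ where it is $\Z$, so $\check{H}^{k-1}(\partial(G,\mc{P});\Z)$ vanishes except in degree $n-1$ where it is $\Z$. The homology-manifold property requires showing that for every $x\in\partial(G,\mc{P})$ the local cohomology $\check{H}^{*}(\partial(G,\mc{P}),\partial(G,\mc{P})\setminus\{x\};\Z)$ is concentrated in degree $n-1$ with value $\Z$. I would do this by establishing a local version of \eqref{mainisomorphism}, relating $\check{H}^{*}(U;\Z)$ for open $U\subset\partial(G,\mc{P})$ to a suitable cohomology of the cusped space relative to the complement of the preimage of $U$, and then applying it to a shrinking family of neighborhoods of $x$. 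At a conical limit point the computation should proceed as in \cite{BM91,B96}. At a parabolic fixed point $p$ with stabilizer $P$, small neighborhoods of $p$ reflect the geometry of the horoball stabilized by $P$, and the local cohomology should ultimately be computable from $H^{*}(P;\Z P)$; the PD$(n-1)$ property of $P$, a consequence of $(G,\mc{P})$ being PD$(n)$, then gives the correct local answer.

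For (2) $\Rightarrow$ (1), the main theorem together with the Čech sphere hypothesis immediately yields that $H^{k}(G,\mc{P};\Z G)$ is $\Z$ in degree $n$ and $0$ elsewhere, giving the global half of the PD$(n)$ condition. To conclude the pair is actually PD$(n)$ one must additionally verify that each $P\in\mc{P}$ is a PD$(n-1)$ group, and this is exactly where the homology-manifold hypothesis enters. Applying the local version of \eqref{mainisomorphism} at a parabolic fixed point of $P$, the prescribed local cohomology of $\partial(G,\mc{P})$ translates into $H^{k}(P;\Z P)=0$ for $k\neq n-1$ and $\Z$ for $k=n-1$; combined with the type $F$ assumption on $P$, this gives PD$(n-1)$.

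The main obstacle is establishing and applying the local refinement of Theorem \ref{thm:maintheorem} at parabolic fixed points. Conical points can be handled much as in the hyperbolic case, but parabolic neighborhoods require careful control of how a cusp in the cusped space interacts with its single point at infinity, and the local computation must correctly detect the possibly high-dimensional cohomology of the peripheral group.
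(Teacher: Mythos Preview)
Your direction (2) $\Rightarrow$ (1) contains a misconception. In the paper (and in \cite{BE78}), a \PD{n} pair is \emph{by definition} an FP pair with $H^k(G,\mc{P};\Z G)$ equal to $\tilde{\Z}$ in degree $n$ and $0$ elsewhere. There is no additional requirement that each $P\in\mc{P}$ be \PD{n-1}; that is a \emph{consequence} (Proposition \ref{prop: subgroups of pd n pair}), not part of the definition. Hence the \v{C}ech cohomology sphere hypothesis alone, combined with Theorem \ref{thm:maintheorem} and the type $F$ assumption (which gives FP), already yields \PD{n}; see the corollary just before the reductions in Section \ref{sec:pdpair}, which invokes \cite[Theorem 6.2]{BE78}. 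The homology-manifold hypothesis plays no role in this direction, so your proposed use of it to detect \PD{n-1} for the peripherals is unnecessary.

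For (1) $\Rightarrow$ (2), your outline and the paper agree on the \v{C}ech sphere part, but diverge substantially on the homology-manifold part. You propose a case analysis, treating conical and parabolic points separately via a ``local version'' of Theorem \ref{thm:maintheorem}. The paper does not separate the two types of boundary points. Instead, after reducing to the orientable case and arranging $\dim\mc{X}=n$, it proves that the compactly supported cellular cochain complex of $\mc{X}$ is \emph{regular} in the sense of Definition \ref{def:regular}: any coboundary supported near a point of $\partial(G,\mc{P})$ is the coboundary of a cochain supported in a slightly larger neighborhood. This is the relative analogue of \cite[Proposition 2.7]{B96}, and the proof (Proposition \ref{prop: regularity}) constructs bounded-displacement chain maps $f\co C_c^{n-*}(\mc{X})\to C_*(\mc{X})$, $g\co C_*(\mc{X})\to C_c^{n-*}(\mc{X})$, and a bounded-displacement homotopy $h$ between $gf$ and the identity. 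The parabolic geometry enters only in the lemmas (\ref{lem: inclusion of open horoball}--\ref{lem: cocycles distributed throughout horoballs}) needed to produce these bounded-displacement maps deep in the horoballs, where there is no cocompact $G$--action. Once regularity is established, the local homology computation at every boundary point goes through uniformly exactly as in \cite[Theorem 2.8]{B96}, via the pro-isomorphism of $\{H_c^k(U_i)\}$ with $\{H_c^k(\mc{X})\}$. Your ``local refinement'' is in effect what regularity delivers, but you have not identified the mechanism (bounded-displacement chain homotopy equivalences), and the conical/parabolic dichotomy you set up is not the organizing principle the argument actually needs.
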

In the particular case that $n=3$, then we recover a result of Tshishiku and Walsh \cite{TshWa}.  Namely,
$(G,\mc{P})$ is a \PD{3} pair if and only if $\partial(G,\mc{P})$ is homeomorphic to a $2$--sphere  (see Corollary \ref{cor:2sphere}).  
\subsection{Outline}
In Section \ref{sec:prelim}, we recall some basic facts about relative hyperbolicity and various cohomology theories which occur in the paper.  Some of this is expanded on further in an appendix.  

In Section \ref{sec:N-connected}, we construct, for an $F_\infty$ relatively hyperbolic group pair, a sequence of more and more highly connected metric spaces on which the pair acts in a cusp uniform manner.  We show that the Bowditch boundary compactifies these spaces as kind of weak $Z$--set (see Corollary \ref{cor:weak Zset}).  In case the pair $(G,\mc{P})$ is type $F$, the Bowditch boundary is an honest $Z$--set (Theorem \ref{thm:zset}).  In any case, it has enough of the properties of a $Z$--set that we can establish Theorem \ref{thm:maintheorem}, which we do in subsection \ref{ss:mainthm}.  

In Section \ref{sec:pdpair}, we restrict attention to relatively hyperbolic \PD{n} pairs and adapt an argument of Bestvina from \cite{B96} to show Theorem \ref{thm:pdpair}.  The key idea here, also used in the final section, is that the cellular/simplicial chain complex of the cusped space we build is ``regular'', in the sense that coboundaries with support near a point at infinity are coboundaries of cochains which are also supported near that point at infinity.

Finally in Section \ref{sec:dimension}, we show (Theorem \ref{topdimthm}) that the topological dimension of the Bowditch boundary can be computed from relative group cohomology, at least with the hypothesis that $\op{cd}(G)<\op{cd}(G,\mc{P})$.  We conjecture the hypothesis is not necessary (Conjecture \ref{conj:dimension}).
\subsection{Acknowledgments}  
The authors thank Ken Brown, Ross Geoghegan, Mike Mihalik, Alessandro Sisto and Jim West for useful conversations.  Thanks also to the referee for useful comments.

\section{Preliminaries}\label{sec:prelim}
This section of background material can be skimmed on first reading and referred back to as necessary.
\subsection{Gromov hyperbolic spaces}
We refer to \cite[III.H]{BH99} for more detail about hyperbolic metric spaces.  We review just enough to fix notation.  Metrics will mostly be written $d(\cdot,\cdot)$, with the specific metric space evident from context.  Sometimes the metric on a space $S$ will be written $d_S(\cdot,\cdot)$.

Let $\delta\geq 0$.  A \emph{$\delta$--hyperbolic space} is a geodesic metric space so that every geodesic triangle is \emph{$\delta$--thin}, i.e. the canonical map to the comparison tripod has fibers of diameter at most $\delta$.  The \emph{Gromov product} $\gprod{x}{y}{z} = \frac{1}{2} (d(z,x)+d(z,y)-d(x,y))$ is the distance from the center of this tripod to the comparison point for $z$.  In a $\delta$--hyperbolic space, $\gprod{x}{y}{z}$ is within $\delta$ of the distance between $z$ and any geodesic joining $x$ to $y$.

A geodesic space is \emph{Gromov hyperbolic} if it is $\delta$--hyperbolic for some $\delta$.
If $X$ is Gromov hyperbolic, and $z\in X$ a basepoint, we say that $\{x_i\}_{i\in \N}$ \emph{tends to infinity} if $\lim_{i,j\to\infty}\gprod{x_i}{x_j}{z}=\infty$.  The \emph{Gromov boundary} $\partial X$ of $X$ is the set of equivalence classes of sequences which go to infinity, where $\{x_i\}_{i\in \N}\sim \{y_i\}_{i\in\N}$ if $\lim_{i,j\to\infty}\gprod{x_i}{y_j}{z} = \infty$.  Whether a sequence goes to infinity is independent of the basepoint $z$.

The Gromov product extends to the boundary; if $\mathbf{x}$ and $\mathbf{y}$ are points in $\partial X$,
\[ \gprod{\mathbf{x}}{\mathbf{y}}{z} = \sup\liminf_{i,j\to\infty}\gprod{x_i}{y_j}{z} \]
where the supremum is taken over all $\{x_i\}_{i\in \N}$ and $\{y_i\}_{i\in \N}$ so that  $\mathbf{x} =\left[ \{x_i\}_{i\in \N}\right]$ and $\mathbf{y} =\left[ \{y_i\}_{i\in \N}\right]$.  Points inside $X$ can be represented by constant sequences.  Then a sequence $\{x_i\}_{i\in \N}$ \emph{converges to} $\mathbf{y}\in \partial X$ if 
\[ \lim_{i\to\infty} \gprod{x_i}{\mathbf{y}}{z} = \infty .\]
This determines a topology on $\overline{X}=X\cup\partial X$ which is independent of the choice of basepoint $z$.  Quasi-isometries (coarsely bi-Lipschitz functions with coarsely dense image) of Gromov hyperbolic spaces extend to homeomorphisms of their boundaries.

\begin{definition}\label{def:Rips complex}
  If $Y$ is any metric space, and $D>0$, the \emph{Rips complex on $Y$ with parameter $D$}  is the simplicial complex $R_D(Y)$ whose vertex set is equal to $Y$, so that distinct points $\{x_0,\ldots,x_n\}$ span an $n$--simplex if $\max\{d_X(x_i,x_j)\}\leq D$.
\end{definition}

\begin{definition}\label{def:compactification of rips}
Suppose $Y$ is a hyperbolic metric space and $Y_0\subset Y$ be a locally finite $C$--dense subset. (Locally finite means every ball meets finitely many elements of $Y_0$; $C$--dense means that every point in $Y$ is within $C$ of some point of $Y_0$).  To topologize $\overline{R_D(Y_0)}=R_D(Y_0)\cup\partial Y$, it suffices to describe a neighborhood basis for a point $z\in\partial Y$.  Fix a basepoint $y_0\in Y_0$.  For each $n\geq 0$, let $V(z,n)$ be the subcomplex of $R_D(Y_0)$ spanned by $\{y\in Y_0\mid \gprod{y}{z}{y_0}\geq n\}$ (using the Gromov product in $Y$).  These sets give a basis of closed neighborhoods of $z$.
\end{definition}

We need the following refinement of \cite[III.$\Gamma$.3.23]{BH99}.
  \begin{lemma}\label{lem:contract nearby}
    Let $Y$ be a $\delta$--hyperbolic space, let $Y_0\subset Y$ be a $C$--dense subset, and let $R = R_D(Y_0)$ where $D\geq 4\delta + 6C$.  Let $L\subset R$ be any finite $k$--dimensional complex, and $x_0$ a vertex of $L$.  Then there is a contraction of $L$ to $x_0$ in the subcomplex of $R^{(k+1)}$ spanned by elements of $Y_0$ which lie within $C+\delta$ of some $Y$--geodesic joining $x_0$ to some vertex of $L$.
  \end{lemma}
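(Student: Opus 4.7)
The plan is to adapt the strategy in \cite[III.$\Gamma$.3.23]{BH99} for contractibility of Rips complexes of hyperbolic spaces, but to track, simplex by simplex, which geodesic each filling cell lies near. Concretely, I would produce the contraction as a simplicial map from a standardly subdivided prism $L \times [0,N]$ into $R^{(k+1)}$, using chosen geodesics in $Y$ to prescribe vertex images.

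For each vertex $v$ of $L$, choose a $Y$-geodesic $\gamma_v$ from $x_0$ to $v$, and fix a spacing $s$ with $s + 2\delta + 4C \le D$. For each integer $n \ge 0$, let $p_n(v) \in Y_0$ be a vertex within $C$ of $\gamma_v(\max(d(x_0,v) - ns, 0))$; by construction $p_n(v)$ lies within $C$ of $\gamma_v$, consecutive $p_n(v), p_{n+1}(v)$ are within $s + 2C \le D$ in $Y$, and $p_N(v) = x_0$ once $Ns \ge d(x_0,v)$. The main geometric claim is: if $\sigma = [v_0, \ldots, v_k]$ is a simplex of $L$ and integers $n_0, \ldots, n_k$ satisfy $|n_i - n_j| \le 1$, then $\{p_{n_0}(v_0), \ldots, p_{n_k}(v_k)\}$ spans a simplex of $R$. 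This follows from $\delta$-slimness of the triangles $(x_0, v_i, v_j)$: since $d(v_i,v_j) \le D$, the Gromov product $\gprod{v_i}{v_j}{x_0}$ is at most $D/2$ below $\min\{d(x_0,v_i), d(x_0,v_j)\}$; below this threshold, points at equal parameter on $\gamma_{v_i}$ and $\gamma_{v_j}$ are within $2\delta$, which gives $d(p_n(v_i), p_m(v_j)) \le 2\delta + 2C + s \le D$. The contraction is then assembled by sending the prism vertex $(v,n)$ to $p_n(v)$ and extending simplicially using the standard prism decomposition.

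Every filling vertex is by construction within $C \le C + \delta$ of the geodesic $\gamma_v$ for the vertex $v$ of $L$ from which it was built, giving the claimed support condition. The constraint $D \ge 4\delta + 6C$ arises naturally from the two bounds $s + 2C \le D$ (same-geodesic spacing) and $s + 2\delta + 4C \le D$ (cross-geodesic pairing) with $s$ chosen maximally. The main obstacle is the regime where $ns$ meets or exceeds the Gromov product $\gprod{v_i}{v_j}{x_0}$: here $\gamma_{v_i}$ and $\gamma_{v_j}$ are no longer $2\delta$-close, and the $p_n$'s near the feet of the geodesics may drift apart. I plan to handle this ``fanning out'' region by using the edges of $L$ themselves as a cap: the terminal portions of $\gamma_{v_i}$ and $\gamma_{v_j}$ beyond the Gromov-product threshold lie within $\delta$ of the $L$-edge $[v_i,v_j]$ (whose length is at most $D$), so the relevant $p_n$'s can be connected directly through the simplex of $R$ spanned by $\{v_0,\ldots,v_k\}$ itself. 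This converts a potentially delicate geometric step into a uniformly bounded finite filling without leaving the prescribed neighborhood.
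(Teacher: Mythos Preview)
Your central geometric estimate is incorrect. You write that ``points at equal parameter on $\gamma_{v_i}$ and $\gamma_{v_j}$ are within $2\delta$,'' but your points $p_n(v_i)$ and $p_m(v_j)$ are \emph{not} at equal parameter from $x_0$: they sit at parameters $d(x_0,v_i)-ns$ and $d(x_0,v_j)-ms$ respectively, and these differ by $|d(x_0,v_i)-d(x_0,v_j)|$ (plus at most $s$), which can be as large as $d(v_i,v_j)\le D$. Concretely, take a tree with a segment $[x_0,b]$ of length $100$, set $v_j=b$, and let $v_i$ be at distance $D$ from $b$ along a branch. Then for every $n$ with $ns\le 100$ your construction gives $d_Y\bigl(\gamma_{v_i}(d(x_0,v_i)-ns),\gamma_{v_j}(d(x_0,v_j)-ns)\bigr)=D$ exactly, so after the $C$--approximations you get $d(p_n(v_i),p_n(v_j))\ge D-2C$ at best and up to $D+2C>D$ at worst. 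This is well inside the ``$2\delta$--close'' region you thought was safe, not in the fanning-out zone, so your proposed cap by the original simplex of $L$ does not address it.

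The paper's argument follows Bridson--Haefliger's iterative scheme: at each step move only the vertex of the current complex farthest from $x_0$, replacing it by a point $v_1$ within $C$ of the midpoint (at distance $D/2$ from $v$) of a geodesic $\gamma_v$ to $x_0$. The farthest-first discipline is exactly what guarantees $d(v_1,w)\le D$ for every neighbor $w$ of $v$: it forces $d(x_0,w)\le d(x_0,v)$, which is the missing inequality in your simultaneous-advance scheme. The refinement over \cite[III.$\Gamma$.3.23]{BH99} is then an induction showing that each successive replacement $v_1,v_2,\ldots$ stays within $C+\delta$ of the \emph{original} geodesic $\gamma_v$, using one thin triangle per step. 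If you want to salvage a prism-style argument, you would need either to parametrize all tracks by distance from $x_0$ (and then carefully handle the stalling that occurs when shorter tracks reach their endpoints), or to first normalize so that all vertices of $L$ lie at the same distance from $x_0$; either way the bookkeeping becomes essentially equivalent to the farthest-first iteration.
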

  \begin{proof}
    A close reading of the proof of \cite[III.$\Gamma$.3.23]{BH99} yields the statement, as we now explain.  

    Given a $k$--complex $L$ in $R$ and a vertex $x_0$ of $L$, one homotopes the complex $L$ closer and closer to $x_0$, beginning with cells adjacent to a vertex $v$ which is farthest in $Y$ from $x_0$.  To accomplish this homotopy, one must find an element $v_1\in Y_0$ which is within $R$ of every vertex of $L$ adjacent to $v$, and so that $d_Y(v_1,x_0)$ is smaller than $d_Y(v,x_0)$ by a definite amount.  Then for each $k$--simplex $\sigma$ of $L$ containing $v$, there is a $(k+1)$--simplex containing $\sigma$ and the vertex $v_1$, and we can homotope $L$ across this $(k+1)$--simplex.  Ultimately the complex $L$ will be homotoped into a single simplex $\sigma_0$ of which $x_0$ is a vertex, and it then can be contracted easily in the $(k+1)$--skeleton of that simplex.

  The vertex $v_1$ is found by the following procedure:  Let $\gamma_v$ be a $Y$--geodesic from $x_0$ to $v$, let $y_v$ be a point on $\gamma_v$ at distance $D/2$ from $v$, and let $v_1$ be any point in $Y_0$ within $C$ of $y_v$.  (See \cite[III.$\Gamma$.3.23]{BH99} for the proof that this vertex satisfies the necessary adjacencies.)

  If $v_1$ is not now in a $D$--ball about $x_0$ it will eventually be the farthest vertex again, and we obtain another vertex $v_2$, and so on.  Given $v$, let $v_1,\ldots,v_n$ be the collection of vertices obtained in this way, and note that these are precisely the vertices $v$ passes through on the way to the simplex $\sigma_0$.

  We claim that every vertex $v_i$ lies within $C+\delta$ of $\gamma_v$, and we prove this by induction on $i$.  The vertex $v_1$ lies within $C$ of $\gamma_v$, so we can get started.  Suppose that $v_i$ lies within $C+\delta$ of $\gamma_v$, and let $\gamma_i$ be the geodesic from $v_i$ to $x_0$ we use to choose $v_{i+1}$, and let $y_i$ be the point on $\gamma_i$ at distance $D/2$ from $v_i$.
  \begin{figure}[htbp]
    \centering
    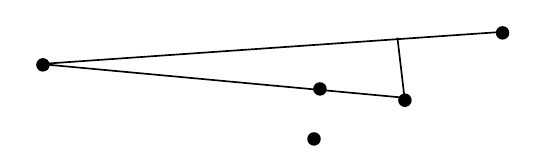
    \caption{The point $v_{i+1}$ is not too far from $\gamma_v$.}
    \label{fig:ripscontract}
  \end{figure}
    See Figure \ref{fig:ripscontract}.  Note that $v_i$ is at most distance $C+\delta$ from some point $z_i\in \gamma_v$ by induction, and that $d(v_i,y_i) = D/2 >C+\delta$.  By the thinness of the triangle with vertices $x_0, v_i$, and $z_i$, we have that $y_i$ is at most $\delta$ from $\gamma_v$.  Thus $v_{i+1}$ is distance at most $C+\delta$ from $\gamma_v$.
  \end{proof}
  
With this lemma, we prove the following statement.

\begin{prop}\label{prop:condition iii for Rips complex}
Suppose $Y$ is $\delta$-hyperbolic.  Let $Y_0\subseteq Y$ be a $C$-dense locally finite subset. Let $R$ be as in \ref{lem:contract nearby} and let $\bar{R}=R\cup\partial Y$. Then, for every $z\in\partial Y$ and every neighborhood $U\subseteq\bar{R}$ of $z$, there is a neighborhood $V\subseteq U$ of $z$ such that, for all $i$, every map $S^i\rightarrow V\setminus\partial Y$ is nullhomotopic in $U\setminus\partial Y$.
\end{prop}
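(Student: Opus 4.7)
The plan is to invoke Lemma \ref{lem:contract nearby} after reducing to a finite subcomplex, the crucial point being that the contracting subcomplex produced by that lemma automatically lies in $U$ once $V$ is chosen deep enough in the neighborhood system at $z$. Since the closed sets $V(z,n)$ form a neighborhood basis at $z$, I first pick $N$ with $V(z,N) \subseteq U$ and then set $V = V(z,M)$ for $M = N + K_0$, with $K_0 = K_0(\delta, C)$ a constant to be determined. Given $f \colon S^i \to V \setminus \partial Y$, compactness of $S^i$ forces the image into a finite subcomplex $L \subseteq V(z,M)$ of some dimension $k$. Choosing a vertex $x_0$ of $L$, Lemma \ref{lem:contract nearby} produces a contraction of $L$ to $x_0$ inside the subcomplex $K$ of $R^{(k+1)}$ spanned by the $Y_0$-vertices within $C + \delta$ of some $Y$-geodesic $\gamma = [x_0, v]$, with $v$ a vertex of $L$. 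It then suffices to verify $K \subseteq V(z,N)$.

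The heart of the argument is a Gromov-product estimate: every vertex $w$ of $K$ satisfies $\gprod{w}{z}{y_0} \geq M - K_0$ for $K_0 = K_0(\delta, C)$. Since $x_0$ and $v$ both lie in $V(z,M)$, the four-point inequality yields $\gprod{x_0}{v}{y_0} \geq M - \delta$. A direct triangle-inequality calculation then shows $\gprod{w'}{x_0}{y_0} \geq \gprod{x_0}{v}{y_0} \geq M - \delta$ for every $w'$ on the subsegment $\gamma$ (writing the Gromov product explicitly and estimating $d(w', y_0)$ via the triangle inequality at $x_0$ or $v$, whichever is closer to $w'$). Since $d(w, w') \leq C + \delta$ for some such $w'$, this transfers to $\gprod{w}{x_0}{y_0} \geq M - 2\delta - C$. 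One more four-point inequality gives
\[ \gprod{w}{z}{y_0} \geq \min\bigl\{\gprod{w}{x_0}{y_0},\ \gprod{x_0}{z}{y_0}\bigr\} - \delta \geq M - 3\delta - C. \]
Setting $K_0 = 3\delta + C$ puts every vertex of $K$ into $V(z,N)$, and since $V(z,N)$ is full, $K \subseteq V(z,N) \cap R \subseteq U \setminus \partial Y$. The contraction of $L$ then witnesses a nullhomotopy of $f$ in $U \setminus \partial Y$.

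The main obstacle I expect is simply the bookkeeping of additive constants through these thin-triangle and four-point estimates; the geometric substance is just that geodesics between points that are deep in the $V(z,\cdot)$ filtration themselves stay deep in that filtration, with Lemma \ref{lem:contract nearby} doing all the work of supplying contractions of precisely this local shape.
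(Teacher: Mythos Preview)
Your proof is correct and follows essentially the same approach as the paper's: both invoke Lemma~\ref{lem:contract nearby} on a finite subcomplex carrying the image of $f$, then use Gromov-product estimates to verify that the contracting subcomplex stays inside $U$. The paper treats the two endpoints of the geodesic symmetrically (working with arbitrary vertices $v_1,v_2$ of $L$ and the inequality $d(v,v_1)+d(v,v_2)\le d(v_1,v_2)+2(C+\delta)$), whereas you exploit that one endpoint is the fixed basepoint $x_0$, which yields a slightly tighter and additive offset $K_0=3\delta+C$; this is a cosmetic difference, not a different route.
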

	\begin{proof}
	The proof of this proposition is essentially contained in the proof of \cite[Theorem 1.2]{BM91}. There, the authors treat the case that $Y$ is a graph.
	
	Let $z\in\partial Y$ and $U\subseteq\bar{R}$ be a neighborhood of $z$. Fix a point $y_0\in Y$. There is a constant $c>0$ such that $U$ contains the closure of the subcomplex of $R$ spanned by the vertices $v$ which satisfy $(v|z)_{y_0}\ge c$. Let $V$ be the closure of the subcomplex spanned by the vertices $v$ such that $(v|z)_{y_0}\ge2c+C+4\delta$.
	
	Let $f:S^i\rightarrow V\setminus\partial Y$ be a map. we may assume that the image of $f$ is contained in a finite subcomplex of $R$. Suppose $v_1$ and $v_2$ are vertices of this subcomplex. By Lemma \ref{lem:contract nearby}, it suffices to show that, if $v$ is a vertex with $d(v,v_1)+d(v,v_2)\le d(v_1,v_2)+2(C+\delta)$ then $v\in U$. Let $v$ be such a vertex. We have $(v_1|v_2)_{y_0}\ge\min\{(v_1|z)_{y_0},(v_2|z)_{y_0}\}-\delta\ge2c+C+3\delta$. Our assumption on $v$ gives $(v_1|v)_{y_0}+(v_2|v)_{y_0}\ge(v_1|v_2)_{y_0}+d(v,y_0)-C-\delta\ge2c+2\delta$. We may assume (possibly switching $v_1$ and $v_2$) that $(v_1|v)_{y_0}\ge c+\delta$. Then $(v|z)_{y_0}\ge\min\{(v_1|z)_{y_0},(v_1|v)_{y_0}\}\ge c$ as desired.
	\end{proof}

\subsection{Cusped spaces, relative hyperbolicity, and the Bowditch boundary}
\subsubsection{The combinatorial cusped space}
Let $\Gamma$ be a graph.
\begin{definition}\label{def: comb horoball} The \emph{combinatorial horoball based on} $\Gamma$ is the graph with vertex set $\Z_{\geq 0}\times v(\Gamma)$ and the following edges
\begin{enumerate}
\item A \textit{vertical} edge between $(n,v)$ and $(n+1,v)$
\item A \textit{horizontal} edge between $(n,v)$ and $(n,w)$ whenever $d_\Gamma(v,w)\le 2^n$
\end{enumerate}
The combinatorial horoball is denoted $CH(\Gamma)$ and is endowed with a metric giving all edges length $1$.
Define the \textit{depth} of a vertex $(n,v)$ to be $n$ and extend this linearly over the edges.
\end{definition}

\begin{definition}
  In this paper, a \emph{group pair} $(G,\mc{P})$ is a finitely generated group $G$ together with a finite collection $\mc{P}$ of finitely generated proper subgroups of $G$.
\end{definition}

\begin{definition}\label{def:combinatorial cusped}
  Let $(G,\mc{P})$ be a group pair.  Suppose $S$ is a finite generating set for $G$ which contains finite generating sets for each $P\in \mc{P}$.  (Such a generating set is called a \emph{compatible} generating set.)  For each left coset $gP$ of some $P\in \mc{P}$ there is a copy $\Gamma_{gP}$ of the Cayley graph $\Gamma(P,P\cap S)$ contained in the Cayley graph $\Gamma(G,S)$.
  The \emph{combinatorial cusped space} $X_{CH}(G,\mc{P},S)$ is obtained from $\Gamma(G,S)$ by gluing, to each such coset, a copy of the combinatorial horoball based on $\Gamma(P,P\cap S)$.
\end{definition}

There are many equivalent definitions of relative hyperbolicity (see \cite{Hruska}).  The following definition is from \cite{GM08}.
\begin{definition}
  The pair $(G,\mc{P})$ is \emph{relatively hyperbolic} if some (equivalently any) combinatorial cusped space $X_{CH}(G,\mc{P},S)$ is Gromov hyperbolic.
\end{definition}
\begin{remark}
  The combinatorial cusped space defined in \cite{GM08} also has $2$--cells which make it simply connected.  We will only use the $1$--skeleton in this paper, preferring a different method for obtaining a cusped space with higher connectedness properties (Section \ref{sec:N-connected}).
\end{remark}

\subsubsection{Bowditch boundary}\label{subsub:convergence}
\begin{definition}
  Let $(G,\mc{P})$ be a relatively hyperbolic pair.  The Gromov boundary of the combinatorial cusped space is denoted $\partial(G,\mc{P})$, and called the \emph{Bowditch boundary} of $(G,\mc{P})$.
\end{definition}
The pair $(G,\mc{P})$ acts on $\partial(G,\mc{P})$ as a \emph{geometrically finite convergence group}.  This means the following:
\begin{enumerate}
\item $G\acts\partial(G,\mc{P})$ is \emph{convergence}, meaning $G$ acts properly discontinuously on the set of distinct triples of points in $\partial(G,\mc{P})$.
\item Every $z\in\partial(G,\mc{P})$ is either
  \begin{enumerate}
  \item a \emph{conical limit point}, meaning there is a sequence $\{g_i\}_{i\in\N}$ and a pair of distinct points $a,b\in \partial(G,\mc{P})$ so that $\lim_{i\to\infty}g_i(z) = b$ and $\lim_{i\to\infty}g_i(x) = a$ uniformly for all $x\neq z$, or
  \item a \emph{bounded parabolic point}, meaning that the stabilizer of $z$ acts properly cocompactly on $\partial(G,\mc{P})\setminus\{z\}$.
  \end{enumerate}
\item Each $P\in\mc{P}$ is the stabilizer of some bounded parabolic point, and every bounded parabolic point has stabilizer conjugate to exactly one $P\in\mc{P}$.
\end{enumerate}
The following theorem of Yaman shows the Bowditch boundary is well-defined.

\begin{theorem}\cite{Yaman04}\label{thm:yaman}
  Let $M$ be a nonempty perfect metrizable compactum with a $G$--action.  The following are equivalent:
  \begin{enumerate}
  \item $(G,\mc{P})$ acts as a geometrically finite convergence group on $M$.
  \item $(G,\mc{P})$ is relatively hyperbolic and $M$ is equivariantly homeomorphic to $\partial(G,\mc{P})$. 
  \end{enumerate}
\end{theorem}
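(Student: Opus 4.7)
My approach would follow Yaman's strategy, which in turn builds on Bowditch's characterization of hyperbolic groups via convergence dynamics, and would split into two implications.

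For $(2) \Rightarrow (1)$, suppose $(G,\mc{P})$ is relatively hyperbolic with $M$ equivariantly homeomorphic to $\partial(G,\mc{P})$. I would argue directly from the geometry of the combinatorial cusped space $X = X_{CH}(G,\mc{P},S)$. Since $G$ acts properly on the hyperbolic space $X$, the induced action on $\partial X = M$ is automatically a convergence action. To classify boundary points, analyze geodesic rays in $X$: a ray either descends infinitely into some horoball (yielding a parabolic fixed point whose stabilizer is a conjugate of the corresponding $P \in \mc{P}$ associated to the horoball), or it returns infinitely often to bounded neighborhoods of the Cayley graph of $G$, in which case the orbit of a basepoint under a well-chosen sequence $\{g_i\} \subset G$ exhibits the point as a conical limit point. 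Bounded parabolicity then follows from the fact that each $P$ acts cocompactly on the ``sphere at infinity'' of its horoball, which can be matched via a horospherical projection to $M \setminus \{\text{parabolic point}\}$.

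For the substantive direction $(1) \Rightarrow (2)$, I would try to reconstruct a hyperbolic cusped space from the dynamical data on $M$, in several steps: (i) form the space $\Theta(M)$ of distinct triples in $M$ and use the convergence hypothesis to get a proper discontinuous $G$--action; (ii) using bounded parabolicity together with the finiteness of peripheral conjugacy classes supplied by geometric finiteness, choose an equivariant family of disjoint ``horoball neighborhoods'' of the parabolic points whose complement carries a cocompact $G$--action; (iii) build a combinatorial model analogous to $X_{CH}(G,\mc{P},S)$ by attaching combinatorial horoballs to a Cayley graph of $G$ along each coset $gP$, with $\mc{P}$ chosen to be a set of representatives for the conjugacy classes of bounded parabolic stabilizers; and (iv) verify this model is Gromov hyperbolic, so that $(G,\mc{P})$ is relatively hyperbolic in the sense of the paper, and identify its Gromov boundary equivariantly with $M$.

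The main obstacle is the hyperbolicity check in step (iv). This requires translating the uniform convergence condition at conical points into a uniform ``tracking'' of geodesic bigons in the putative cusped space, and translating bounded parabolicity into the correct exponential distortion inside each horoball — essentially converting dynamical hypotheses into thin-triangle estimates. Once hyperbolicity is secured, the identification $M \cong \partial(G,\mc{P})$ can be completed by producing a continuous equivariant map between the two compacta and upgrading it to a homeomorphism using minimality and the uniqueness of convergence boundaries for a given geometrically finite action.
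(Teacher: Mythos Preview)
The paper does not prove this theorem; it is stated with the citation \cite{Yaman04} and used as a black box, with no accompanying proof environment. There is therefore nothing in the paper to compare your proposal against.

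As a sketch of Yaman's actual argument, your outline is reasonable in spirit for the easy direction $(2)\Rightarrow(1)$, but your plan for $(1)\Rightarrow(2)$ diverges from what Yaman does. You propose to build the combinatorial cusped space $X_{CH}(G,\mc{P},S)$ directly and then verify Gromov hyperbolicity from the dynamical hypotheses. Yaman instead works via Bowditch's machinery of \emph{annulus systems}: she uses the convergence dynamics on $M$ to produce a symmetric annulus system on $M$, from which Bowditch's earlier theory yields a hyperbolic graph whose boundary is $M$; relative hyperbolicity of $(G,\mc{P})$ is then read off from the way parabolic points sit in this construction. The advantage of the annulus route is that the hyperbolicity is supplied by Bowditch's theorem rather than having to be checked by hand, which is exactly the ``main obstacle'' you flag in step~(iv). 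Your proposed direct approach would require a substantial independent argument at that point, and it is not clear how one would carry out the thin-triangle estimates for the cusped space using only the convergence hypotheses without effectively reproducing the annulus machinery.
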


Note that $\partial(G,\mc{P})$ is perfect so long as $\mc{P}$ contains no finite group. 

\subsubsection{Finite index subgroups}
\begin{definition}\label{def:induced}
  Let $(G,\mc{P})$ be a group pair.  Suppose that $H<G$ is finite index.  We define an induced peripheral structure $\mc{P}_H$ on $H$.  For each $i$, let $D_i$ be a collection of representatives of double coset space $H\backslash G/P_i$, and define
\[ P_{H} = \{ H\cap dP_id^{-1}\mid d\in D_i, P_i\in \mc{P}\}.\]
\end{definition}

\begin{lemma}\label{lem:finiteindex}
  If $(G,\mc{P})$ is relatively hyperbolic, and $H<G$ is finite index, then $(H,\mc{P}_H)$ is relatively hyperbolic, and $\partial(H,\mc{P}_H)\cong \partial(G,\mc{P})$.
\end{lemma}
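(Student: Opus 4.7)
The plan is to verify the hypotheses of Yaman's criterion (Theorem \ref{thm:yaman}) for the restricted action of $H$ on $M := \partial(G,\mc{P})$, equipped with the peripheral structure $\mc{P}_H$. That the convergence property passes from $G$ to the subgroup $H$ is immediate, since proper discontinuity on the set of distinct triples of $M$ is inherited. So the substantive work splits into (i) showing every $z \in M$ is either an $H$-conical limit point or an $H$-bounded parabolic point, and (ii) identifying $\mc{P}_H$ as a system of representatives for the $H$-conjugacy classes of maximal $H$-parabolic subgroups.

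For the conical case of (i), suppose $z$ is a $G$-conical limit point with witnessing sequence $\{g_i\} \subset G$ satisfying $g_i z \to b$ and $g_i x \to a$ for $x \neq z$. Choose a finite set of representatives for $G/H$ and decompose $g_i = k_i h_i$ with $h_i \in H$; after passing to a subsequence we may take $k_i \equiv k$. Then $h_i = k^{-1} g_i \in H$ satisfies $h_i z \to k^{-1} b$ and $h_i x \to k^{-1} a$, exhibiting $z$ as an $H$-conical limit point. For the parabolic case, if $p$ is bounded parabolic for $G$ with stabilizer $G_p$, then $H_p := H \cap G_p$ has index at most $[G:H]$ in $G_p$; given a compact $G_p$-fundamental set $K \subset M \setminus \{p\}$ and coset representatives $t_1, \dots, t_n$ of $H_p$ in $G_p$, the set $\bigcup_i t_i K$ is a compact $H_p$-fundamental set, so $p$ remains $H$-bounded parabolic.

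For (ii), fix $P \in \mc{P}$ and let $p_P \in M$ be the unique point with $G$-stabilizer $P$. Two points $g p_P, g' p_P$ lie in the same $H$-orbit iff $g' \in H g P$, so the $H$-orbits inside $G \cdot p_P$ are in bijection with $H \backslash G / P$; the $H$-stabilizer of $d p_P$ for $d$ a representative of such a double coset is exactly $H \cap d P d^{-1}$, matching the definition of $\mc{P}_H$. Stabilizers arising from distinct double cosets or different $P$ fix distinct points of $M$ and so are not $H$-conjugate, so $\mc{P}_H$ is a complete set of representatives. Since $M$ is metrizable and perfect (the latter inherited from the $G$-structure, as $H \cap d P d^{-1}$ has finite index in $d P d^{-1}$ and hence is finite iff $P$ is finite), Yaman's theorem then yields both assertions of the lemma.

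The step I expect to be the main obstacle is (ii): one must check that each subgroup in $\mc{P}_H$ really is a maximal $H$-parabolic, rather than sitting inside some larger one, and that there is no double-counting across distinct double cosets. Both points reduce to the identification of maximal $H$-parabolics with stabilizers of $H$-bounded parabolic points in $M$, but this has to be said explicitly. A more hands-on alternative would be to build a combinatorial cusped space for $(H,\mc{P}_H)$ from a compatible generating set and exhibit a quasi-isometry with $X_{CH}(G,\mc{P},S)$, but this requires essentially the same orbit bookkeeping on horoballs, so the Yaman route is the cleaner choice.
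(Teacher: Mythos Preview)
Your proposal is correct and follows essentially the same route as the paper: verify that the restricted $H$--action on $\partial(G,\mc{P})$ is a geometrically finite convergence action with the $H$--conjugacy classes of maximal parabolics represented by $\mc{P}_H$, then invoke Yaman's Theorem~\ref{thm:yaman}. The paper's own proof is a two-sentence sketch asserting precisely this, so you have simply filled in the details (the coset-decomposition argument for conical limit points, the finite-index argument for bounded parabolics, and the double-coset bookkeeping for~(ii)) that the paper leaves to the reader.
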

\begin{proof}
  The subgroup $H$ acts as a geometrically finite convergence group on $\partial(G,\mc{P})$, and every parabolic fixed point has stabilizer conjugate in $H$ to exactly one $P\in \mc{P}_H$.  Yaman's theorem \ref{thm:yaman} implies that $(H,\mc{P}_H)$ is relatively hyperbolic with Bowditch boundary homeomorphic to $\partial(G,\mc{P})$.
\end{proof}

\subsection{\v{C}ech Cohomology and Singular Cohomology}

\begin{definition}
A space $X$ is \emph{homologically locally connected in dimension $n$} (or $HLC^n$) if, for each $x\in X$ and neighborhood $U$ of $x$, there is a neighborhood $V\subseteq U$ of $x$ such that the induced map $H_i(V;\Z)\rightarrow H_i(U;\Z)$ on reduced homology is trivial for $i\le n$.
\end{definition}

The following proposition is in Spanier \cite[Corollaries 6.8.8 and 6.9.5]{Spanier}.

\begin{prop}\label{prop: Cech Singular Iso}
Suppose $X$ is $HLC^n$, Hausdorff, and paracompact.
Let $A$ be an abelian group.
Then, $\check{H}^i(X;A)\cong H^i(X;A)$ for $i=0,...,n$.
\end{prop}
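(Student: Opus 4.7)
The plan is to prove this via an intermediate cohomology theory, namely Alexander--Spanier cohomology $\bar{H}^*(X;A)$, along the lines sketched in Spanier's book. The first step is to identify \v{C}ech cohomology with Alexander--Spanier cohomology for paracompact Hausdorff $X$. One route is to observe that both compute the sheaf cohomology of the constant sheaf $\underline{A}$: for \v{C}ech this is Leray's theorem (valid under paracompactness), while for Alexander--Spanier one exhibits the sheafification of the presheaf of Alexander--Spanier cochains as a flasque (hence acyclic) resolution of $\underline{A}$. Alternatively, a direct comparison map from the \v{C}ech complex of arbitrarily fine covers to the Alexander--Spanier complex can be built and shown to induce an isomorphism by a colimit argument.

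Next, I would define the natural transformation $\bar{H}^*(X;A) \to H^*(X;A)$. An Alexander--Spanier $i$-cochain is represented by a function on $X^{i+1}$ modulo functions that vanish on some neighborhood of the diagonal. There is an obvious restriction: given a singular simplex $\sigma\co \Delta^i \to X$, evaluate the function on the sequence of vertices $(\sigma(e_0),\ldots,\sigma(e_i))$. This gives a natural chain map, well-defined on cohomology because locally-zero cochains pair trivially with the subcomplex of singular simplices with vertices in a sufficiently small set.

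The heart of the argument is to show this map is an isomorphism in degrees $\leq n$ when $X$ is $HLC^n$. The strategy is to consider, for a given open cover $\mc{U}$, the subcomplex $S^{\mc{U}}_*(X) \subset S_*(X)$ of singular chains subordinate to $\mc{U}$ (each simplex contained in some $U\in\mc{U}$). The inclusion $S^{\mc{U}}_*(X)\hookrightarrow S_*(X)$ is a chain homotopy equivalence by iterated barycentric subdivision. Dually, the Alexander--Spanier cochains for covers finer than $\mc{U}$ pair with $S^{\mc{U}}_*(X)$ through the vertex map above. One then builds, by induction on $i\leq n$ and using acyclic models, a chain-level inverse: the $HLC^i$ hypothesis is exactly what is needed to contract a singular $i$-cycle with vertices in a small set into a chain supported in a slightly larger set, which allows one to produce the required chain homotopies.

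The main obstacle is this last inductive acyclic-models step: one must carefully choose, for each vertex tuple, a compatible family of small singular simplices filling it in, so that the assignment is natural and chain-homotopic to the identity. The $HLC^n$ assumption is the exact input that guarantees the requisite nullhomotopies exist through dimension $n$ but provides no control above; this is why the resulting isomorphism is restricted to degrees $i \leq n$. Since the full technical execution is carried out in Spanier \cite[\S6.8--6.9]{Spanier}, I would simply cite those results rather than reproduce the argument.
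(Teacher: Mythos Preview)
Your proposal is correct and matches the paper's approach exactly: the paper gives no proof at all, merely citing Spanier \cite[Corollaries 6.8.8 and 6.9.5]{Spanier}, and your sketch of the Alexander--Spanier intermediary followed by the same citation is entirely in line with this.
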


\subsection{Cohomology of Group Pairs}\label{ss:relative cohomology}

\begin{notation}
When we write $\Hom_G,\op{Ext}_G$ and $\op{Tor}^G$, we take this to mean the $\Hom_{\Z G},\op{Ext}_{\Z G}$ and $\op{Tor}^{\Z G}$, respectively.
We let $A$ denote a (discrete) ring.
The cohomology of a group with coefficients in $M$, $H^k(G;M)$, is $\op{Ext}_G^k(\Z;M)$ and the homology, $H_k(G;M)$, is $\op{Tor}_k^G(\Z;M)$ where $\Z$ has a trivial $G$-action.
\end{notation}

The cohomology of a group pair will be defined following \cite{BE78}.
Let $G$ be a group and let $\mathcal{P}$ be a family of subgroups.
Define the $G$-module $\Z G/\mathcal{P}:=\oplus_{P\in\mathcal{P}}Z[G/P]$ and let $\Delta_{G/\mathcal{P}}$ be the kernel of the augmentation $\Z G/\mathcal{P}\rightarrow\Z$.
Then, for a $G$-module $M$, the relative cohomology groups $H^k(G,\mathcal{P};M)$ are defined to be $\op{Ext}_G^{k-1}(\Delta_{G/\mathcal{P}},M)$.
Similarly, the relative homology groups $H_k(G,\mathcal{P};M)$ are defined to be $\op{Tor}_{k-1}^G(\Delta_{G/\mathcal{P}},M)$.
\begin{remark}
  We recall what this means:  Let $$\cdots\to F_2 \to F_1 \to F_0 \to \Delta_{G/\mathcal{P}}$$ be a free resolution of the $G$-module $\Delta_{G/\mathcal{P}}$.  Then $\op{Tor}_*^G(\Delta_{G/\mathcal{P}},M)$ denotes the homology of $$\cdots\to F_2\otimes_G M \to F_1\otimes_G M \to F_0\otimes_G M,$$ whereas $\op{Ext}_G^*(\Delta_{G/\mathcal{P}},M)$ denotes the (co)homology of $$\Hom_G(F_0,M)\to \Hom_G(F_1,M)\to \Hom_G(F_2,M)\to \cdots .$$

The dimension shift is clarified if one imagines the resolution coming from a contractible simplicial complex $K$ with $G$--action chosen so that the stabilizers of vertices are the conjugates of elements of $\mc{P}$, but that all other cell stabilizers are trivial.  We can then identify $\Z G/\mc{P}$ with $C_0(K)$, and the image of the boundary map $C_1(K)\to C_0(K)$ is then equal to $\Delta_{G/\mc{P}}\subset C_0(K)$.  Now setting $F_i = C_{i+1}(K)$ gives a free resolution of $\Delta_{G/\mc{P}}$.
\end{remark}

Crucially, there are long exact sequences of pairs.
\begin{prop}\label{prop:BEles}\cite[Prop 1.1]{BE78}
  For any group pair $(G,\mc{P})$ and any $G$--module $M$, there are long exact sequences in cohomology and homology:
\[
...\rightarrow H^k(G;M)\rightarrow H^k(\mathcal{P};M)\rightarrow H^{k+1}(G,\mathcal{P};M)\rightarrow H^{k+1}(G;M)\rightarrow...
\]
\[
...\rightarrow H_{k+1}(G;M)\rightarrow H_{k+1}(G,\mathcal{P};M)\rightarrow H_k(\mathcal{P};M)\rightarrow H_k(G;M)\rightarrow...
\]
where $H^k(\mathcal{P};M):=\prod_{P\in\mathcal{P}}H^k(P;M)$ and $H_k(\mathcal{P};M):=\oplus_{P\in\mathcal{P}}H_k(P;M)$.
\end{prop}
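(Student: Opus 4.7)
The plan is to derive both long exact sequences from a single short exact sequence of $G$-modules, namely the defining sequence
\[ 0 \to \Delta_{G/\mc{P}} \to \Z G/\mc{P} \to \Z \to 0, \]
where the map on the right is the augmentation. Applying $\op{Ext}^*_G(-,M)$ yields a cohomological long exact sequence, and applying $-\otimes_G M$ (or equivalently $\op{Tor}^G_*(-,M)$) yields a homological one. The content of the proposition is then just a matter of identifying the three terms appearing in each long exact sequence.

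Two of the three identifications are essentially by definition: $\op{Ext}^k_G(\Z,M)=H^k(G;M)$ and $\op{Tor}^G_k(\Z,M)=H_k(G;M)$ are the standard definitions of group (co)homology, while $\op{Ext}^{k-1}_G(\Delta_{G/\mc{P}},M) = H^k(G,\mc{P};M)$ and $\op{Tor}^G_{k-1}(\Delta_{G/\mc{P}},M) = H_k(G,\mc{P};M)$ are the definitions given just before the statement of the proposition. One has to check that the connecting homomorphism in the $\op{Ext}$ sequence lands in the correct degree so that the indexing shifts by $1$ agree with the statement; this is immediate from the long exact sequence of Ext applied to the short exact sequence above, since $H^{k+1}(G,\mc{P};M)$ is by definition $\op{Ext}^k_G(\Delta_{G/\mc{P}},M)$ and sits between $\op{Ext}^k_G(\Z G/\mc{P},M)$ and $\op{Ext}^{k+1}_G(\Z,M)$. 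The analogous check for Tor is the same up to reversing arrows.

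The one substantive step is the identification of the middle term. Since $\Z G/\mc{P}=\bigoplus_{P\in\mc{P}}\Z[G/P]$, and Ext in the first variable turns direct sums into products while Tor preserves direct sums, it suffices to show that $\op{Ext}^k_G(\Z[G/P],M)\cong H^k(P;M)$ and $\op{Tor}^G_k(\Z[G/P],M)\cong H_k(P;M)$ for each $P\in\mc{P}$. This is Shapiro's lemma, which follows from the observation that $\Z[G/P]\cong \Z G\otimes_{\Z P}\Z$ is induced from the trivial $\Z P$-module, together with the change-of-rings adjunction $\Hom_G(\Z G\otimes_{\Z P} N,M)\cong \Hom_P(N,M)$ and the flatness of $\Z G$ as a right $\Z P$-module (since it is free on coset representatives). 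There is no real obstacle here beyond keeping the index bookkeeping straight; the proposition is essentially a formal consequence of the definitions plus Shapiro's lemma, and is proved in this manner in \cite{BE78}.
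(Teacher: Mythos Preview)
Your argument is correct and is precisely the standard proof from \cite{BE78}: apply the long exact sequences for $\op{Ext}_G(-,M)$ and $\op{Tor}^G(-,M)$ to the defining short exact sequence $0\to\Delta_{G/\mc{P}}\to\Z G/\mc{P}\to\Z\to 0$, then use Shapiro's lemma to identify $\op{Ext}_G^k(\Z[G/P],M)\cong H^k(P;M)$ and $\op{Tor}^G_k(\Z[G/P],M)\cong H_k(P;M)$. The paper itself gives no proof and simply cites \cite[Prop.~1.1]{BE78}, so there is nothing further to compare.
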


Let $K$ be a $K(G,1)$ cell complex and let $\{L_P\}_{P\in\mathcal{P}}$ be disjoint $K(P,1)$ subcomplexes such that each inclusion induces the inclusion $P\hookrightarrow G$ on $\pi_1$ (after a choice of path connecting the base points).
Let $L:=\sqcup_{P\in\mathcal{P}}L_P$.
Then $(K,L)$ is called an \textit{Eilenberg-MacLane pair} for $(G,\mathcal{P})$.
Bieri and Eckmann provide a topological interpretation of the relative cohomology groups.
\begin{theorem}\label{thm: EM pairs compute cohomology}\cite[Thm 1.3]{BE78}
If $(K,L)$ is an Eilenberg-MacLane pair for $(G,\mathcal{P})$ and $M$ is a $G$-module, then there are the following diagrams of long exact sequences
\[
\begin{tikzcd}
...\arrow{r}&H^k(G,\mathcal{P};M)\arrow{d}\arrow{r}&H^k(G;M)\arrow{d}\arrow{r}&H^k(\mathcal{P};M)\arrow{d}\arrow{r}&H^{k+1}(G,\mathcal{P};M)\arrow{d}\arrow{r}&...\\
...\arrow{r}&H^k(K,L;M)\arrow{r}&H^k(K;M)\arrow{r}&H^k(L;M)\arrow{r}&H^{k+1}(K,L;M)\arrow{r}&...
\end{tikzcd}
\]
\[
\begin{tikzcd}
...\arrow{r}&H_k(L;M)\arrow{r}\arrow{d}&H_k(K;M)\arrow{r}\arrow{d}&H_k(K,L;M)\arrow{r}\arrow{d}&H_{k-1}(L;M)\arrow{r}\arrow{d}&...\\
...\arrow{r}&H_k(\mathcal{P};M)\arrow{r}&H_k(G;M)\arrow{r}&H_k(G,\mathcal{P};M)\arrow{r}&H_{k-1}(\mathcal{P};M)\arrow{r}&...
\end{tikzcd}
\]
which are commutative up to sign and where the vertical arrows are isomorphisms.
\end{theorem}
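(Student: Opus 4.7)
The plan is to use the relative cellular chain complex of the universal cover of $(K,L)$ to simultaneously compute the singular (co)homology of $(K,L)$ and the relative group (co)homology of $(G,\mc{P})$. Let $\tilde K\to K$ be the universal cover and $\tilde L\subseteq\tilde K$ the preimage of $L$. Since $K$ is a $K(G,1)$, the space $\tilde K$ is contractible and $G$ acts freely on cells, so $C_*(\tilde K)$ is a free $\Z G$-resolution of $\Z$. The hypothesis on $(K,L)$ guarantees that each path component of $\tilde L$ is a universal cover of some $L_P$, hence contractible, and that the set of components is in $G$-equivariant bijection with $\bigsqcup_{P\in\mc{P}}G/P$. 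Consequently $C_*(\tilde L)$ is a complex of free $\Z G$-modules whose homology is $\Z G/\mc{P}$ concentrated in degree zero.

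Next I would feed this into the short exact sequence of chain complexes
\[ 0\to C_*(\tilde L) \to C_*(\tilde K) \to C_*(\tilde K,\tilde L) \to 0. \]
The associated long exact sequence in homology, together with the computations above, shows that $C_*(\tilde K,\tilde L)$ is a complex of free $\Z G$-modules whose homology is concentrated in degree $1$ and equal to $\Delta_{G/\mc{P}}$ (via the connecting homomorphism). Any such complex of projectives is quasi-isomorphic to a shifted projective resolution of $\Delta_{G/\mc{P}}$, so applying $\Hom_G(-,M)$ yields a cochain complex whose degree-$k$ cohomology equals $\op{Ext}_G^{k-1}(\Delta_{G/\mc{P}},M)=H^k(G,\mc{P};M)$, and applying $-\otimes_{\Z G}M$ yields an analogous identification with $H_k(G,\mc{P};M)$. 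On the topological side, these chain complexes compute exactly $H^*(K,L;M)$ and $H_*(K,L;M)$, giving the middle vertical isomorphisms. The flanking isomorphisms $H^*(K;M)\cong H^*(G;M)$ and $H^*(L;M)\cong H^*(\mc{P};M)$ (with their homological counterparts) are the classical ones coming from the free $G$-actions on $\tilde K$ and on the components of $\tilde L$.

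Commutativity of the two displayed diagrams will follow by naturality, since both pairs of long exact sequences arise from the same short exact sequence of chain complexes after application of $\Hom_G(-,M)$ or $-\otimes_{\Z G}M$; this topological sequence covers the algebraic short exact sequence $0\to \Delta_{G/\mc{P}}\to \Z G/\mc{P}\to \Z\to 0$ that produces the Bieri--Eckmann sequence of Proposition \ref{prop:BEles}. The main subtlety to handle carefully is to trace the connecting homomorphism from the topological short exact sequence and check that it matches the algebraic one under the identification $H_1(C_*(\tilde K,\tilde L))\cong\Delta_{G/\mc{P}}$; this zig-zag computation, together with keeping track of the degree shift built into the definition $H^k(G,\mc{P};M)=\op{Ext}_G^{k-1}(\Delta_{G/\mc{P}},M)$, is the main bookkeeping obstacle, but requires no new ideas.
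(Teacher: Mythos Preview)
The paper does not give its own proof of this theorem; it is quoted from \cite[Thm 1.3]{BE78}. The only trace of the argument in the paper is Proposition \ref{prop: chain complexes of EM pairs compute cohomology}, which records precisely the intermediate fact you rely on: that $\Hom_G(C_*(\tilde K,\tilde L),M)$ and $C_*(\tilde K,\tilde L)\otimes_G M$ compute $H^*(G,\mc{P};M)$ and $H_*(G,\mc{P};M)$. Your outline is correct and is essentially the Bieri--Eckmann argument: identify $C_*(\tilde K)$ and $C_*(\tilde L)$ as free $\Z G$--complexes resolving $\Z$ and $\Z G/\mc{P}$, deduce from the long exact sequence that $C_*(\tilde K,\tilde L)$ has homology $\Delta_{G/\mc{P}}$ concentrated in degree $1$, and then invoke the fact that a bounded-below complex of projectives with homology concentrated in a single degree computes $\op{Ext}$ and $\op{Tor}$ of that homology (with the appropriate shift). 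The one place worth tightening is the phrase ``quasi-isomorphic to a shifted projective resolution'': concretely, since $H_0(C_*(\tilde K,\tilde L))=0$ the surjection $C_1\to C_0$ splits, so one can replace $C_1$ by the projective $\ker\partial_1$ and obtain an honest projective resolution $\cdots\to C_2\to\ker\partial_1\to\Delta_{G/\mc{P}}\to 0$, which makes the degree shift and the comparison of connecting maps transparent.
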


In proving Theorem \ref{thm: EM pairs compute cohomology}, Bieri and Eckmann prove the following statement.

\begin{prop}\label{prop: chain complexes of EM pairs compute cohomology}
Let $(K,L),(G,\mc{P})$ and $M$ be as in \ref{thm: EM pairs compute cohomology}.
Let $\tilde{K}$ be the universal cover of $K$ and let $\tilde{L}$ be the preimage of $L$ under $\tilde{K}\rightarrow K$.
If $C_*(\tilde{K},\tilde{L})$ is the relative chain complex (this can be singular, cellular or simplicial), then the homology of $C_*(\tilde{K},\tilde{L})\otimes_G M$ is $H_*(G,\mc{P};M)$ and the cohomology of $\Hom_G(C_*(\tilde{K},\tilde{L});M)$ is $H^*(G,\mc{P};M)$.
\end{prop}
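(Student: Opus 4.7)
The plan is to recognize $C_*(\tilde{K}, \tilde{L})$ as a complex of free $\Z G$-modules whose homology is concentrated in degree $1$ and equal to $\Delta_{G/\mc{P}}$, and then to conclude that this complex computes $\op{Ext}_G^{*-1}(\Delta_{G/\mc{P}}, M)$ and $\op{Tor}_{*-1}^G(\Delta_{G/\mc{P}}, M)$, which by the Bieri--Eckmann definition are $H^*(G, \mc{P}; M)$ and $H_*(G, \mc{P}; M)$.

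First I would verify that the short exact sequence
\[
0 \to C_*(\tilde{L}) \to C_*(\tilde{K}) \to C_*(\tilde{K}, \tilde{L}) \to 0
\]
is split in each degree, with all three terms chain complexes of free $\Z G$-modules, and that $C_*(\tilde{K})$ is a free resolution of $\Z$ while $C_*(\tilde{L})$ is a free resolution of $\Z G/\mc{P}$. The first resolution is standard ($\tilde{K}$ is contractible and $G$ acts freely on it by deck transformations). For the second, $\pi_1$-injectivity of each inclusion $L_P \hookrightarrow K$ (built into the definition of an Eilenberg--MacLane pair) implies that the preimage of $L_P$ in $\tilde{K}$ is a disjoint union of $|G/P|$ copies of the universal cover $\tilde{L}_P$, permuted transitively by $G$ with stabilizer $P$. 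Hence $C_*(\tilde{L}) \cong \bigoplus_{P \in \mc{P}} \Z G \otimes_{\Z P} C_*(\tilde{L}_P)$, and since each $C_*(\tilde{L}_P)$ is a free $\Z P$-resolution of $\Z$ and $\Z G$ is $\Z P$-free, this is a free $\Z G$-resolution of $\bigoplus_P \Z[G/P] = \Z G/\mc{P}$. The quotient $C_*(\tilde{K}, \tilde{L})$ is free on (lifts of) cells of $K$ not in $L$, which have trivial $G$-stabilizer.

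The long exact sequence in homology of the SES above, combined with $H_*(\tilde{K}) = \Z$ and $H_*(\tilde{L}) = \Z G/\mc{P}$ (both concentrated in degree $0$), yields $H_i(C_*(\tilde{K}, \tilde{L})) = 0$ for $i \neq 1$ and $H_1(C_*(\tilde{K}, \tilde{L})) = \ker(\Z G/\mc{P} \to \Z) = \Delta_{G/\mc{P}}$. Letting $Z_1 \subseteq C_1(\tilde{K}, \tilde{L})$ denote the submodule of $1$-cycles, the vanishing of $H_0$ makes the sequence $0 \to Z_1 \to C_1(\tilde{K}, \tilde{L}) \to C_0(\tilde{K}, \tilde{L}) \to 0$ split (as $C_0(\tilde{K}, \tilde{L})$ is free), so $Z_1$ is projective, and
\[
\cdots \to C_3(\tilde{K}, \tilde{L}) \to C_2(\tilde{K}, \tilde{L}) \to Z_1 \to \Delta_{G/\mc{P}} \to 0
\]
is a projective resolution of $\Delta_{G/\mc{P}}$. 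Writing $R_*$ for the associated chain complex (with $R_0 = 0$, $R_1 = Z_1$, and $R_i = C_i(\tilde{K}, \tilde{L})$ for $i \ge 2$), I would consider the inclusion $\iota \co R_* \hookrightarrow C_*(\tilde{K}, \tilde{L})$, whose cokernel is the two-term complex $C_0(\tilde{K}, \tilde{L}) \xrightarrow{\cong} C_0(\tilde{K}, \tilde{L})$ in degrees $1$ and $0$. This cokernel is chain-contractible, and the splitting $C_1 \cong Z_1 \oplus C_0$ makes the SES $0 \to R_* \to C_*(\tilde{K}, \tilde{L}) \to \op{coker}(\iota) \to 0$ split in each degree. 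Applying $\Hom_G(-, M)$ or $- \otimes_G M$ then yields short exact sequences of (co)chain complexes in which the cokernel contribution remains contractible, so $\Hom_G(C_*(\tilde{K}, \tilde{L}), M) \to \Hom_G(R_*, M)$ and $R_* \otimes_G M \to C_*(\tilde{K}, \tilde{L}) \otimes_G M$ are quasi-isomorphisms. Tracking the one-degree shift, this gives $H^k(\Hom_G(C_*(\tilde{K}, \tilde{L}), M)) \cong \op{Ext}^{k-1}_G(\Delta_{G/\mc{P}}, M) = H^k(G, \mc{P}; M)$, and similarly for homology.

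The main technical point is establishing the induced-module decomposition of $C_*(\tilde{L})$ in the first step; the subsequent arguments are routine diagram chases once all chain complexes are known to be free and the short exact sequences between them split in each degree.
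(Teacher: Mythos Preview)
Your argument is correct and is essentially the proof given by Bieri and Eckmann in \cite{BE78}, which the paper cites rather than reproving. The key observation---that $C_*(\tilde K,\tilde L)$ has homology $\Delta_{G/\mc{P}}$ concentrated in degree~$1$, and hence (after splitting off the contractible tail in degrees $0$ and $1$) yields a projective resolution of $\Delta_{G/\mc{P}}$ with the built-in degree shift---is exactly the mechanism behind the dimension shift in the Bieri--Eckmann definition, as the paper's own remark following that definition indicates.
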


Following \cite{K09}, we make the following definitions
\begin{definition}
Suppose $(G,\mathcal{P})$ satisfies the following
\begin{itemize}
\item[i)] $G$ and each $P\in\mathcal{P}$ is type $FP$
\item[ii)] $\mathcal{P}$ is a finite collection of subgroups
\end{itemize}
Then the pair $(G,\mathcal{P})$ is \textit{type $FP$}.
\end{definition}
\begin{remark}
In the case that $(G,\mathcal{P})$ is torsion-free relatively hyperbolic and $\mathcal{P}$ consists of type $F$ subgroups, Dahmani shows that $(G,\mathcal{P})$ is type FP \cite{D03}.
\end{remark}
\begin{definition}
The \textit{cohomological dimension} of $(G,\mathcal{P})$ is
\[
\op{cd}(G,\mathcal{P}):=\max\{n\in\N\mid H^n(G,\mathcal{P};M)\neq0 \mbox{ for some } \Z G\text{--module } M\}
\]
\end{definition}
Kapovich observes that the following statement can be proved in the same way as the corresponding absolute statement.
\begin{prop}\cite[Lemma 2.9]{K09}\label{prop:kap}
If $(G,\mathcal{P})$ is type $FP$, then
\[
\op{cd}(G,\mathcal{P})=\max\{n\in\N \mid H^n(G,\mathcal{P};\Z G)\neq0\}
\]
\end{prop}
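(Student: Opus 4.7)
The plan is to reduce to the standard absolute version, due essentially to Bieri and Brown, that for a group $H$ of type $FP$ one has $\op{cd}(H)=\max\{n:H^n(H;\Z H)\neq 0\}$. Since $H^k(G,\mc{P};M)=\op{Ext}_G^{k-1}(\Delta_{G/\mc{P}},M)$, the proposition will follow from the same argument as soon as one knows that the augmentation kernel $\Delta_{G/\mc{P}}$ admits a finite resolution by finitely generated projective $\Z G$-modules.

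So the first step is to show $\Delta_{G/\mc{P}}$ is of type $FP$ over $\Z G$. Consider the short exact sequence
\[0\longrightarrow\Delta_{G/\mc{P}}\longrightarrow \Z G/\mc{P}\longrightarrow\Z\longrightarrow 0.\]
The type $FP$ hypothesis on $G$ supplies a finite projective resolution of $\Z$ by finitely generated $\Z G$-modules. For each $P\in\mc{P}$, the type $FP$ hypothesis on $P$ produces such a resolution over $\Z P$; induction $\Z G\otimes_{\Z P}(-)$ is exact and sends finitely generated projectives to finitely generated projectives, so $\Z[G/P]=\Z G\otimes_{\Z P}\Z$ is type $FP$ over $\Z G$. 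Because $\mc{P}$ is finite, $\Z G/\mc{P}=\bigoplus_{P\in\mc{P}}\Z[G/P]$ is also type $FP$. The two-out-of-three property of type $FP$ applied to the sequence above then yields that $\Delta_{G/\mc{P}}$ is type $FP$ over $\Z G$.

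With this in hand, $\Delta_{G/\mc{P}}$ has finite projective dimension $d$, so $\op{cd}(G,\mc{P})\leq d+1<\infty$; moreover, since $\op{Ext}_G^*(\Delta_{G/\mc{P}},-)$ is computed by $\Hom_G(P_\bullet,-)$ for a finite complex $P_\bullet$ of finitely generated projectives, this functor commutes with arbitrary direct sums. Now set $N=\max\{n:H^n(G,\mc{P};\Z G)\neq 0\}$; clearly $N\leq\op{cd}(G,\mc{P})$. I prove the reverse inequality by downward induction on $n$ that $H^n(G,\mc{P};M)=0$ for every $n>N$ and every $\Z G$-module $M$. The base case $n>\op{cd}(G,\mc{P})$ is trivial. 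For the inductive step, present $M$ as a quotient of a free module, $0\to K\to F\to M\to 0$ with $F=\bigoplus_I\Z G$, and consider the $\op{Ext}$ long exact sequence, which reads
\[H^n(G,\mc{P};F)\longrightarrow H^n(G,\mc{P};M)\longrightarrow H^{n+1}(G,\mc{P};K).\]
The first term vanishes by the commutation with direct sums and the definition of $N$, and the third vanishes by the inductive hypothesis; hence $H^n(G,\mc{P};M)=0$, as desired.

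The main obstacle is the first step, showing $\Delta_{G/\mc{P}}$ is type $FP$; once that is secured, everything else is a direct transcription of Brown's proof for the absolute case. The type $FP$ hypothesis on the pair $(G,\mc{P})$ is designed precisely so that this verification goes through, and the proof is essentially bookkeeping with induced modules and the two-out-of-three property for $FP$ modules.
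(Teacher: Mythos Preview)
Your proof is correct and follows precisely the approach indicated in the paper: the statement is cited from Kapovich with the remark that it ``can be proved in the same way as the corresponding absolute statement,'' and you have supplied exactly that adaptation. The only step beyond the absolute case is verifying that $\Delta_{G/\mc{P}}$ is type $FP$ over $\Z G$, which you handle correctly via induction of the peripheral resolutions and the two-out-of-three property; the downward induction is then Brown's standard argument verbatim.
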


\section{An $N$--connected cusped space for $(G,\mc{P})$.}\label{sec:N-connected}
The combinatorial cusped space described in \cite{GM08} and the relative Cayley complex described in \cite{Osin} are well-suited for arguments involving $2$--dimensional filling problems, but are not so useful for higher-dimensional homotopy theoretic arguments.  Dahmani shows in \cite{D03} that if the peripheral subgroups of a pair $(G,\mc{P})$ have finite classifying spaces, then these can be extended to give a finite-dimensional classifying space for $G$ provided that $G$ is torsion free.  Moreover he is able to build a $Z$--set compactification of $G$, given such compactifications for the peripherals.  In the present work, we do not want to assume type $F$, but only type $F_\infty$, and moreover we do not want to assume that the peripherals have nice compactifications, so we take different approach. We build, for each $N$, an $N$--connected finite dimensional version of the cusped space $\mc{X}(N)$, for which the Bowditch boundary will form a kind of ``weak $Z$--set compactification''.  Key features of this space are
\begin{enumerate}
  \item The compactly supported $k$--dimensional cohomology $H_c^k(\mc{X}(N);A)$ can be identified with $H^k(G,\mc{P};A G)$ for $k\leq N$ (Proposition \ref{prop:topiso}).
  \item The compactified space $\overline{\mc{X}(N)}=\mc{X}(N)\cup \partial(G,\mc{P})$ has vanishing cohomology up to dimension $N$ (Lemma \ref{lem:vanish}).
  \end{enumerate}
The isomorphism between $H_c^k(\mc{X}(N);A)$ and $\check{H}^{k-1}(\partial(G,\mc{P});A)$ then follows, for $k\leq N$, from the long exact sequence for a pair (see Proposition \ref{prop:boundaryiso}).

Accordingly, we fix $(G,\mc{P})$ an $F_\infty$ group pair with $\mathcal{P}$ a finite family of subgroups, and an integer $N\ge 0$.  We will build a locally compact $N$--connected metric simplicial complex $\mc{X}(G,\mc{P},N)$ which can be used to compute the cohomology of the pair $(G,\mc{P})$ in some range.  Since the group pair will be fixed, we will write $\mc{X}(N)$.  Later we will put a metric on this space so it is quasi-isometric to the cusped space for $(G,\mc{P})$.

\subsection{Topology of the $N$--connected cusped space}

\begin{definition}[The $N$-Connected Cusped Space]\label{def:curlyX}
  By assumption both $G$ and the members of $\mc{P}$ have classifying spaces with finitely many cells in each dimension.  Let $B_G$ and $B_P$ for $P\in \mc{P}$ be (pointed) $(N+1)$--skeleta of such classifying spaces, chosen to have the following extra properties:
  \begin{enumerate}
  \item The universal covers $E_G\to B_G$ and $E_P\to B_P$ for $P\in \mc{P}$ are simplicial complexes.
  \item For each $P\in \mc{P}$, an inclusion $\iota_P\co B_P\to B_G$ induces the inclusion of $P$ into $G$.
  \end{enumerate}

  (The notation $B_G$ and $E_G$ is meant to emphasize that the spaces involved in this construction come from classifying spaces and their universal covers but that $B_G\not\simeq BG$ and $E_G\not\simeq EG$ in general.)
  
  Now let $\mathrm{Cyl}$ be the \emph{open} mapping cylinder of $\iota_{\mc{P}}=\bigsqcup_{\mc{P}}\iota_P\co \bigsqcup_{\mc{P}} B_P \to B_G$, i.e.
  \[\mathrm{Cyl} = \left. B_G \sqcup \left([0,\infty)\times \bigsqcup_{\mc{P}}B_P \right) \middle/ \iota_{\mc{P}}(x)\sim (0,x)\right. .\]
See Figure \ref{fig:cyl}.
\begin{figure}[htbp]
  \centering
  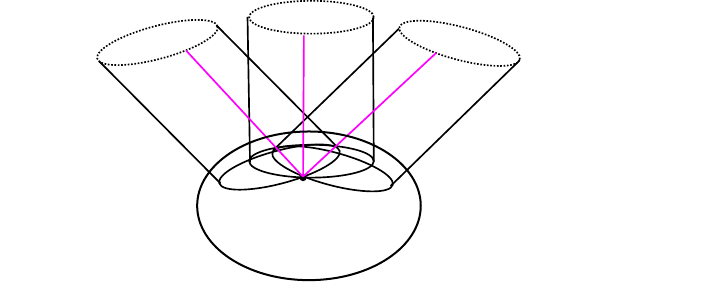
  \caption{The mapping cylinder $\mathrm{Cyl}$.  The purple wedge of rays is $W$ in the proof of Proposition \ref{prop:gromovhyperbolic}.}
  \label{fig:cyl}
\end{figure}
As a topological space, $\mc{X}(N)$ is the universal cover of $\mathrm{Cyl}$.  There is a $G$--equivariant simplicial structure whose $0$--simplices are the $0$--simplices of the universal cover $E_G$ of $B_G$ together with all the points in the preimage of $(v,n)$ where $v$ is a vertex of some $B_P$ and $n\in \Z_{\geq 0}$.  We also assume that the cell structure on each $[0,\infty)\times B_P$ is chosen in some standard way, so that the shift $(t,k)\mapsto (t+1,k)$ gives a simplicial embedding of $[0,\infty)\times \widetilde{B_P}$ into itself.

Equipped with this simplicial structure, $\mc{X}(N)$ will be called the \emph{$N$--connected cusped space}.
\end{definition}
We will put a metric on $\mc{X}(N)$ below in Definition \ref{def:curlyXmetric}.

  \begin{prop}\label{prop:topiso}
    The compactly supported cohomology of $\mc{X}(N)$ is the group cohomology of $(G,\mc{P})$.  More specifically, for $k\leq N$, there are isomorphisms
    \[ H^k(G,\mc{P};A G) \to H_c^k(\mc{X}(N);A).\]
  \end{prop}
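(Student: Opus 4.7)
The plan is to sandwich both $H^k(G,\mc{P};AG)$ and $H_c^k(\mc{X}(N);A)$ between the intermediate complex $\Hom_G(C_*(E_G, \bigsqcup_{\mc{P}} E_P), AG)$ in the range $k \le N$. First I would extend $(B_G, \bigsqcup B_P)$ to a full Eilenberg--MacLane pair $(K,L)$ for $(G,\mc{P})$ by attaching cells only in dimensions $\ge N+2$, which is possible because $B_G$ and each $B_P$ were chosen as $(N+1)$-skeleta of classifying spaces. Proposition \ref{prop: chain complexes of EM pairs compute cohomology} then gives $H^*(G,\mc{P};AG) \cong H^*(\Hom_G(C_*(\tilde K, \tilde L), AG))$; since the relative chain complex of $(\tilde K, \tilde L)$ agrees cell-by-cell with that of $(E_G, \bigsqcup E_P)$ through dimension $N+1$, this yields $H^k(G,\mc{P};AG) \cong H^k(\Hom_G(C_*(E_G, \bigsqcup E_P), AG))$ for $k \le N$.

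For the compactly supported side, I would use the $G$-equivariant projection $p\co \mc{X}(N) \to E_G$ collapsing each cusp cylinder $[0,\infty) \times E_P$ onto its base $\iota_P(E_P) \subset E_G$. For any $T \ge 0$, $p$ restricts to a map of pairs $\big(\mc{X}(N), \bigsqcup [T,\infty) \times E_P\big) \to (E_G, \bigsqcup E_P)$, and is a homotopy equivalence on both total space and subspace via the obvious straight-line deformation retraction of the cylinders. Since both sides are free $\Z G$-chain complexes in nonnegative degrees, $p_*$ is a $G$-equivariant chain homotopy equivalence, and applying $\Hom_G(-, AG)$ preserves this. Next, because the $G$-action is free and preserves depth in each cusp, each $G$-orbit of cells lies either entirely inside or entirely outside $\bigsqcup [T,\infty) \times E_P$; the pair has only finitely many $G$-orbits of cells outside per dimension; and a $G$-equivariant cochain valued in $AG$ corresponds, via taking the coefficient of $1 \in G$, to a finitely-supported cellular cochain on each orbit. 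Together these observations identify $\Hom_G(C_*(\mc{X}(N), \bigsqcup [T,\infty) \times E_P), AG)$ with the subcomplex of $C_c^*(\mc{X}(N);A)$ of cochains that vanish on the deep cuspidal region $\bigsqcup [T,\infty) \times E_P$, and these subcomplexes exhaust $C_c^*(\mc{X}(N);A)$ as $T \to \infty$.

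Finally, since filtered colimits are exact and commute with cohomology, and each term $H^k(\Hom_G(C_*(\mc{X}(N), \bigsqcup [T,\infty) \times E_P), AG))$ agrees with $H^k(G,\mc{P};AG)$ for $k \le N$ (with transition maps isomorphisms by the previous step), I conclude
\[ H_c^k(\mc{X}(N);A) \;=\; \varinjlim_T H^k\big(\Hom_G(C_*(\mc{X}(N), \textstyle\bigsqcup [T,\infty) \times E_P), AG)\big) \;\cong\; H^k(G,\mc{P};AG). \]
I expect the main difficulty to be the careful identification of compactly supported cellular cochains on $\mc{X}(N)$ with the filtered colimit of $\Hom_G$-complexes: this rests on freeness of the $G$-action and on $G$ preserving cusp depth, and must be verified at the cellular level. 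The $G$-equivariance of the cellular deformation retraction $p$, and the resulting quasi-isomorphism on relative chain complexes, is the second place where care is needed.
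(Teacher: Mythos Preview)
Your approach is correct in outline and genuinely different from the paper's. The paper avoids the filtered colimit entirely by exploiting a self-similarity trick: it sets $E$ to be the full subcomplex on vertices of depth $\le 1$ and $V$ the subcomplex on vertices of depth exactly $1$, observes that $E\setminus V$ is homeomorphic to $\mc{X}(N)$ itself (so that $H_c^*(E,V;A)\cong H_c^*(\mc{X}(N);A)$ by an excision-type appendix result), and then applies Brown's Lemma \cite[VIII.7.4]{Bro87} to the cocompact complexes $E$ and $V$ to identify $C_c^*(E;A)$ and $C_c^*(V;A)$ with $\Hom_G(C_*(E);AG)$ and $\Hom_G(C_*(V);AG)$. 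A single Five Lemma finishes. Your colimit route is more generic (it does not use that horoballs are self-similar under depth-shift) but pays for it with the filtered-limit machinery; the paper's argument is shorter and makes the $AG$--module structure immediate.

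One point in your write-up deserves care. You treat $(B_G,\bigsqcup_{\mc P} B_P)$ as a pair with \emph{disjoint} subcomplexes and then project the deep cusps to $\bigsqcup E_P\subset E_G$ at depth $0$. But Definition~\ref{def:curlyX} only guarantees inclusions $\iota_P\co B_P\to B_G$, not that their images are disjoint; if they overlap, your ``homotopy equivalence on the subspace'' $\bigsqcup[T,\infty)\times E_P\to\bigcup\iota_P(E_P)$ can fail (the domain is disconnected while the target need not be), and the Bieri--Eckmann Eilenberg--MacLane pair hypothesis requires disjointness. The fix is exactly what the paper does implicitly: work at positive depth, where the horoballs are genuinely disjoint. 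Concretely, replace your target pair $(E_G,\bigsqcup E_P)$ by $(\mc{X}(N)_{\le 1},\mc{X}(N)_1)$ (i.e.\ the paper's $(E,V)$); then your projection $p$ becomes the depth-retraction to depth $\le 1$, the disjointness is automatic, and your first paragraph's invocation of Proposition~\ref{prop: chain complexes of EM pairs compute cohomology} applies cleanly. With that adjustment your colimit argument goes through, and in fact the transition maps are already isomorphisms, so a single $T$ suffices --- which recovers the paper's proof.
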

  \begin{proof}
    The space $\mc{X}(N)$ is an $(N+2)$--dimensional locally finite complex, so it is locally compact and locally contractible.  Define $E$ to be the full subcomplex on the vertices of depth $\leq 1$, and let $V$ be the full subcomplex on the vertices of depth exactly $1$.

Consider the short exact sequence of simplicial cochains with compact support:
\[
0\rightarrow C_c^*(E,V;A)\rightarrow C_c^*(E;A)\rightarrow C_c^*(V;A)\rightarrow0
\]
Here, $C_c^*(E,V;A)$ is defined to be the kernel of the restriction $C_c^*(E;A)\rightarrow C_c^*(V;A)$.

  By construction, the space $E\setminus V$ is homeomorphic to $\mc{X}(N)$.  By Theorem \ref{thm: rel cochains with support}, the cohomology of $C_c^*(E,V;A)$ is $H_c^*(\mc{X}(N);A)$.
  
  By \cite[Lemma VIII.7.4]{Bro87} the compactly supported simplicial cochain complexes $C_c^*(E;A)$ and $C_c^*(V;A)$ are naturally isomorphic as $AG$ modules to the complexes $\Hom_G(C_*(E),AG)$ and $\Hom_G(C_*(V),AG)$, respectively.  (Brown assumes $A=\Z$ in the statement, but the proof goes through for an arbitrary ring.)
Because the isomorphism is natural, there is a map of $AG$-modules such that the following diagram commutes:
\[
\begin{tikzcd}
0\arrow{r}&C_c^*(E,V;A)\arrow{r}\arrow{d}&C_c^*(E;A)\arrow{r}\arrow{d}&C_c^*(V;A)\arrow{r}\arrow{d}&0\\
0\arrow{r}&\Hom_G(C_*(E,V);AG)\arrow{r}&\Hom_G(C_*(E);AG)\arrow{r}&\Hom_G(C_*(V);AG)\arrow{r}&0
\end{tikzcd}
\]

For $k\le N$, the $k$-th cohomology of $\Hom_G(C_*(E,V),AG)$ is $H^k(G,\mc{P};AG)$ by an application of Proposition \ref{prop: chain complexes of EM pairs compute cohomology}.  Using the Five Lemma, we deduce
\[ H_c^k(\mc{X}(N);A)\cong H^k(G,\mc{P};AG)\mbox{, when } k\leq N .\]
\end{proof}

We record the following consequence of the proof of Proposition \ref{prop:topiso}, which will be used in the proof of Lemma \ref{lem: inclusion of open horoball}.
\begin{addendum}\label{add: part of main thm proof}
With notation as in the proof of \ref{prop:topiso}, there is the following diagram where the vertical arrows are isomorphisms of $AG$-modules.
\[
\begin{tikzcd}
\cdots\arrow{r}&H_c^{N-1}(V;A)\arrow{r}\arrow{d}&H_c^N(E,V;A)\arrow{r}\arrow{d}&H_c^N(E;A)\arrow{r}\arrow{d}&H_c^N(V;A)\arrow{d}\\
\cdots\arrow{r}&H^{N-1}(\mc{P};AG)\arrow{r}&H^N(G,\mc{P};AG)\arrow{r}&H^N(G;AG)\arrow{r}&H^N(\mc{P};AG)
\end{tikzcd}
\]
\end{addendum}
\begin{remark}
  In case $X=\mc{X}(N)$ is contractible and admits a $Z$--set compactification $\overline{X}=X\cup Z$, it is standard to show that $\check{H}^{k-1}(Z;A)\cong H_c^k(X;A)$ for each $k$ (see for instance the proof below of Proposition \ref{prop:boundaryiso}).  In this case we obtain isomorphisms $\check{H}^{k-1}(Z;A)\cong H^k(G,\mc{P};A G)$ for each $k$.  In the current paper we focus on relatively hyperbolic group pairs.  Another family of examples is furnished by pairs $(G,\mc{P})$ where $G$ acts geometrically on a CAT$(0)$ cell complex $E_G$ and each $P$ acts geometrically on a convex subcomplex $E_P$.  In this case the space $\mathrm{Cyl}$ can be given a proper nonpositively curved metric, and its universal cover $X$ can be compactified with its CAT$(0)$ boundary (see \cite[II.8]{BH99}).  
\end{remark}

\subsection{Geometry of the $N$--connected cusped space}

 Next we describe a metric on $\mc{X}(N)$ making it quasi-isometric to the combinatorial cusped space.  The main tool is the warped product, which was first extended to the metric geometry setting by Chen \cite{C99}.

\begin{definition}\label{def: warped product}
Let $X$ and $Y$ be two length spaces and let $f:X\rightarrow[0,\infty)$ be a continuous function.
Let $\gamma:[0,1]\rightarrow X\times Y$ be a path where $\gamma(t)=(\alpha(t),\beta(t))$.
Suppose $\tau=\{0=t_0<...<t_n=1\}$ is a partition of the interval and define
\[
\ell_\tau(\gamma)=\sum_{i=1}^n\left(d_X(\alpha(t_i),\alpha(t_{i-1}))^2+f(\alpha(t_i))^2d_Y(\beta(t_i),\beta(t_{i-1}))^2\right)^{\frac{1}{2}}
\]
The length of $\gamma$ is defined to be the supremum of $\ell_\tau(\gamma)$ over all partitions $\tau$.
This gives a pseudometric on $X\times Y$ and, if $f$ has no zeros, a metric.
The resulting space with this pseudometric is the \textit{warped product of $X$ and $Y$ with respect to $f$} and is denoted as $X\times_fY$.
\end{definition}

\begin{definition}\label{def:curlyXmetric}
  We describe a path metric on  $\mc{X}(N)$ by putting path metrics on various subsets.  Simplices in the universal cover $E_G$ of $B_G$ are metrized as regular euclidean simplices with unit edge lengths.  Let $H$ be a component of the preimage of some $[0,\infty)\times B_P$ (henceforth called a \emph{horoball}).
  For each $P$, the universal cover $E_P$ of $B_P$ inherits a path metric from $E_G$, and we use this path metric to metrize $H$ as a warped product
  \[ H = [0,\infty)\times_{2^{-t}} E_P .\]
  Henceforth when we refer to $\mc{X}(N)$ or the $N$--connected cusped space, we will assume it has been given this metric.
\end{definition}

\begin{remark}
  The proper space studied by Bowditch in \cite{Bowditch12} can be recovered as a special case when the complexes $B_G$ and $B_P$ each have a single $0$-cell, if we take $N=0$ and replace the warping function $2^{-t}$ with $e^{-t}$.  In fact the exact exponential warping function is not important to the quasi-isometry type, and by \cite[Proposition A.5]{GMS}  this space is always equivariantly quasi-isometric to the combinatorial cusped space from Definition \ref{def:combinatorial cusped}.
\end{remark}

\begin{definition}\label{def: depth}
Let $x\in\mathrm{Cyl}$.
If $x$ can be identified with a point $(t,y)\in[0,\infty)\times B_P$ then we define the \emph{depth} of $x$ (denoted $\text{Depth}(x)$) to be $t$.
Otherwise, $\text{Depth}(x)=0$.

If $\tilde{x}\in\mc{X}(N)$, then $\text{Depth}(\tilde{x}):=\text{Depth}(x)$ where $x$ is the image of $\tilde{x}$ under $\mc{X}(N)\rightarrow\mathrm{Cyl}$.
\end{definition}

The following is immediate from the construction.
\begin{lemma}\label{lem:bdddiam}
  Each simplex of $\mc{X}(N)$ has bounded diameter, and there is a $C>0$ so that the $0$--skeleton is $C$--dense.  In particular nearest-point projection to the $0$--skeleton is a quasi-isometry.
\end{lemma}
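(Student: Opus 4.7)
The plan is to verify the three assertions in sequence, exploiting the explicit decomposition of $\mc{X}(N)$ into copies of $E_G$ and warped horoballs described in Definition \ref{def:curlyXmetric}.

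For the uniform bound on simplex diameter, I would split into two cases according to where the simplex lies. A simplex contained in (a lift of) $B_G$ is a regular Euclidean simplex with unit edge lengths, so its diameter is at most $\sqrt{2}$ (in fact bounded by the ambient dimension $N+1$ in any reasonable bound). For a simplex $\sigma$ in a horoball $H=[0,\infty)\times_{2^{-t}}E_P$: by the shift-invariance of the simplicial structure together with $G$-equivariance and the finiteness of each $B_P$, there are only finitely many shapes of simplices modulo the shift $(t,k)\mapsto(t+1,k)$. Each such simplex sits inside a slab $[n,n+1]\times\tau$ for some integer $n\geq 0$ and some simplex $\tau$ of $E_P$. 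Applying the warped product length formula of Definition \ref{def: warped product} directly to a path $(\alpha(s),\beta(s))$ in $\sigma$, and using that $\alpha(s)\geq n$ there, gives the length bound $1+2^{-n}\,\op{diam}(\tau)$. Since the simplices $\tau$ of $E_P$ are regular Euclidean and of bounded dimension, $\op{diam}(\tau)$ is uniformly bounded, so combining with $2^{-n}\leq 1$ produces a uniform diameter bound $D$ for all simplices of $\mc{X}(N)$.

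The $C$-density of the $0$-skeleton follows immediately: every point lies in some simplex, and every simplex contains a vertex, so every point is within distance $D$ of the $0$-skeleton; take $C=D$. For the quasi-isometry statement, $\mc{X}(N)$ is a length space and any nearest-point projection $\pi\co\mc{X}(N)\to\mc{X}(N)^{(0)}$ moves each point by at most $C$. Thus $\pi$ is within bounded distance of the identity on $\mc{X}(N)$, and it is coarsely surjective (in fact surjective). By the standard coarse geometry fact that a map within bounded distance of an isometric embedding between length spaces is a quasi-isometry (with quasi-inverse the inclusion of $\mc{X}(N)^{(0)}$ equipped with the restricted metric), $\pi$ is a $(1,2C)$-quasi-isometry.

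The only real obstacle is the first step, namely bounding diameters in the warped horoballs. The key saving grace is that the shift is a simplicial embedding, so there are only finitely many simplex shapes to consider, and the warping factor $2^{-n}$ makes deeper simplices smaller rather than larger; this is what rules out blow-up of the diameter as one goes into a cusp.
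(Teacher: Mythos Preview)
Your argument is correct. The paper itself gives no proof of this lemma, stating only that it ``is immediate from the construction''; you have supplied exactly the natural verification that the authors leave to the reader. One cosmetic remark: the diameter of a regular Euclidean simplex with unit edge lengths is actually $1$ (the maximum pairwise distance is realized by vertices, and the simplex is convex), not $\sqrt{2}$, but this does not affect your argument since any uniform bound suffices.
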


The main result of this subsection is the following.
\begin{prop}\label{prop:gromovhyperbolic}
  $\mc{X}(N)$ is quasi-isometric to $X_{CH}$.  In particular, $\mc{X}(N)$ is Gromov hyperbolic if and only if $(G,\mc{P})$ is relatively hyperbolic.
\end{prop}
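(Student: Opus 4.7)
The strategy is to build an explicit $G$-equivariant quasi-isometry $\Phi\co X_{CH}\to\mc{X}(N)$ piece by piece, matching the Cayley graph $\Gamma(G,S)$ with the base $E_G\subset\mc{X}(N)$ and each combinatorial horoball with the corresponding warped-product horoball, then invoke quasi-isometry invariance of Gromov hyperbolicity to deduce the second assertion.

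Since $B_G$ is a finite complex with $\pi_1(B_G)=G$, the $G$-action on $E_G$ is free and cocompact, so the orbit map $G\to E_G$ through any basepoint is a $G$-equivariant quasi-isometry between $\Gamma(G,S)$ and $E_G$. Because each $B_P\subset B_G$ realizes $P\hookrightarrow G$ on $\pi_1$, after adjusting the basepoint we may arrange that the restricted orbit map $P\to E_P$ is likewise an equivariant quasi-isometry, with $\Gamma(P,P\cap S)\subset\Gamma(G,S)$ carried (up to bounded error) into $E_P\subset E_G$. Extend this ``base'' quasi-isometry to the horoballs by sending $(n,x)\in CH(\Gamma_{gP})$ to $(n,\Phi(x))$ in the corresponding warped-product horoball of $\mc{X}(N)$. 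By construction $\Phi$ is $G$-equivariant, and its image is coarsely dense by Lemma \ref{lem:bdddiam}.

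The substance of the proof lies in comparing distances within one horoball. Write $H_{CH}=CH(\Gamma(P,P\cap S))$ and $H=[0,\infty)\times_{2^{-t}}E_P$. For depth-$0$ vertices $u=(0,v)$ and $w=(0,v')$ with $d_\Gamma(v,v')=D$, the combinatorial horoball distance is $d_{H_{CH}}(u,w)=2\log_2 D+O(1)$ by the standard formula for combinatorial horoballs. In $H$, the ``staircase'' path that ascends vertically to depth $t$, traverses horizontally at constant depth, and descends again has length $2t+2^{-t}d_{E_P}(v,v')$, which is minimized at $t=\log_2 D+O(1)$ with optimal length $2\log_2 D+O(1)$. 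A matching lower bound is obtained by replacing an arbitrary rectifiable path by a comparable staircase path: any path must reach some maximal depth $t_{\max}$ to accumulate the required horizontal displacement (via the warping inequality $ds\ge 2^{-t}|d\bar\gamma|$) and must travel at least $2t_{\max}$ units vertically, forcing total length $2\log_2 D-O(1)$. Pairs at arbitrary depths are handled the same way. Gluing these per-horoball quasi-isometries along depth-$0$ vertices, where they agree with the base quasi-isometry, produces the desired quasi-isometry $\Phi$.

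The second assertion is then immediate: by definition $(G,\mc{P})$ is relatively hyperbolic iff $X_{CH}$ is Gromov hyperbolic, and Gromov hyperbolicity is a quasi-isometry invariant of geodesic metric spaces. The main technical point is the matching lower bound in the warped product, requiring a careful staircase reduction of arbitrary rectifiable paths; the same argument appears in \cite[Proposition A.5]{GMS} in the case that each peripheral classifying space has a single vertex, and indeed one could alternately cite that result and combine it with the $G$-equivariant quasi-isometries between $E_G,E_P$ and their single-vertex counterparts to shortcut the distance comparison above.
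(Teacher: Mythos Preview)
Your strategy is the same as the paper's: match the Cayley graph with $E_G$ via an orbit map, match each combinatorial horoball with the corresponding warped-product horoball, and observe coarse surjectivity via Lemma~\ref{lem:bdddiam}. The paper packages the argument a bit differently---it works only on $0$--skeletons, defines coarse inverse maps $\iota$ and $\pi$, and checks that $\pi$ is coarsely Lipschitz by uniformly bounding the diameter of $\iota^{-1}(B_{3K}(p))$, splitting into the cocompact region near $E_G$ and the region deep in a horoball---but the underlying geometric comparison in the horoballs is the same as your staircase computation.

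The one place your write-up is genuinely thin is the sentence ``Gluing these per-horoball quasi-isometries along depth-$0$ vertices \ldots\ produces the desired quasi-isometry $\Phi$.'' Gluing quasi-isometries along subspaces is not automatic: one needs that geodesics between points in different pieces pass uniformly close to the gluing locus, i.e.\ that horoballs are uniformly quasi-convex in both $X_{CH}$ and $\mc{X}(N)$ and are coarsely separated from one another by the base. That is true here and standard, but it is exactly the step the paper's ``diameter of $\iota^{-1}$ of a small ball'' formulation is designed to sidestep, since a local bound on preimages plus the usual chain-of-balls argument yields the global coarse Lipschitz constant without ever appealing to quasi-convexity of the pieces. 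If you keep your gluing formulation you should state that separation/quasi-convexity ingredient explicitly, or---as you already note---shortcut the whole thing by reducing to \cite[Proposition~A.5]{GMS} after replacing $E_G$ and each $E_P$ by their single-vertex counterparts via Milnor--\v{S}varc.
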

\begin{proof}
  By Lemma \ref{lem:bdddiam}, it suffices to show that there is a quasi-isometry between $\mc{X}(N)^{(0)}$ and $X_{CH}^{(0)}$.  We show there are coarsely Lipschitz quasi-inverse maps between $\mc{X}(N)^{(0)}$ and the $0$--skeleton
  \[ X_{CH}^{(0)} = G \sqcup \bigsqcup_{\mc{P}}\bigsqcup_{gP\in G/P}\mathbb{Z}_{>0}\times gP\]
    of the combinatorial cusped space.  It follows easily that these maps are quasi-isometries.  We then conclude using the quasi-isometric invariance of Gromov hyperbolicity.

    We first define $\iota \co X_{CH}^{(0)}\to \mc{X}(N)^{(0)}$.  Using the notation in Definition \ref{def:curlyX}, let $W\subset\mathrm{Cyl}$ be a wedge of rays centered at the basepoint $b_G$ of $B_G$, so each ray is equal to the ray $[0,\infty)\times b_P$ inside $\mathrm{Cyl}$.  Let $\tilde{W}$ be a lift to the universal cover $\mc{X}(N)$, and let $\tilde{b}_G$ and $\widetilde{(n,b_P)}$ be the corresponding vertices of this lift.  Define $\iota(g) = g\tilde{b}_G$.  For $gp\in gP$ and $n>0$ define $\iota( (n,gp) ) = gp (\widetilde{(n,b_P)}).$

    The map $\iota$ is injective with image in $\mc{X}(N)^{(0)}$.  Define $\pi\co \mc{X}(N)^{(0)}\to X_{CH}^{(0)}$ by $\pi(x) = \iota^{-1}(\bar{x})$, where $\bar{x}$ is some closest point to $x$ in the image of $\iota$.  Obviously $\pi\circ\iota$ is the identity.  Moreover, the image of $\iota$ is $K$--dense for some $K$, so $\iota\circ\pi$ is within $K$ of the identity.  It is also easy to see that $\iota$ is $K$--Lipschitz for some $K$.

    We now show that $\pi$ is coarsely Lipschitz.  Since the image of $\iota$ is $K$--dense, a standard argument shows that, if we can find a bound on the diameter of $\iota^{-1}(B_{3K}(p))$ independent of $p\in \mc{X}(N)$, then $\pi$ is coarsely Lipschitz.

    (Here is the standard argument: Given $p,q\in \mc{X}(N)$, choose points $p_0,\ldots,p_n$ on a geodesic from $p$ to $q$ so that $p_0=p_i$, $p_n=q$, and $d(p_i,p_{i+1})=K$, except that $d(p_{n-1},p_n)\leq K$.  Then we have $d(p,q)\geq (n-1)K$.  Now choose points $b_i = \iota(a_i)$ so that $d(b_i,p_i)\leq K$, and $a_0=\pi(p)$, $a_n = \pi(q)$.  Now we estimate
\[ d(\pi(p),\pi(q)) \leq \sum_{i=1}^nd(a_{i-1},a_i)\leq n R \leq \frac{R}{K} d(p,q) + R, \]
where $R$ is the bound on the diameter of $\iota^{-1}(B_{3K}(p))$.)

    Let $E_G\subset \mc{X}(N)$ be the universal cover of the $(N+1)$--skeleton of the classifying space $B_G$.  The $G$--action is cocompact in any closed equivariant neighborhood of $E_G$, so there is some constant $B_1$ bounding the diameter of $\iota^{-1}(B_{3K}(p))$ for any $p$ in the $12K$--neighborhood of $E_G$.

    Let $B$ be a $3K$--ball in $\mc{X}(N)$ whose center is in a horoball $H$, at depth at least $12K$.  Suppose $H$ is stabilized by $P^g$, where $P\in \mc{P}$.  Then $H$ is isometric to $[0,\infty)\times_{2^{-t}}E_P$, where $E_P$ is the universal cover of the $(N+1)$--skeleton of $B_P$, a classifying space for $P$.  

    Let $(n,x)$ and $(m,y)$ be points of $B$ in the image of $\iota$, and let $\gamma$ be a geodesic joining them.  We may suppose $n\leq m$, and note that $m-n\leq 6K$.
By our assumptions, the geodesic $\gamma$ lies entirely in $H$, and can be written in terms of the product structure as $(\gamma_1,\gamma_2)$, where $\gamma_2$ is a geodesic in $E_P$.  Because of the warping of the metric, we have
\[ 2^ml(\gamma)\geq  l(\gamma_2). \]
Since $\gamma$ has length at most $6K$, we get (writing $d_{E_P}$ for the path metric on $E_P$)
\[ d_{E_P}((0,x),(0,y))\leq 6 K 2^m.\]
    Note that $(0,x)$ and $(0,y)$ are in the image of $\iota(gP)$.  Let $\Gamma_{gP}$ be the copy of the Cayley graph of $P$ spanned by the vertices $gP$ in $X_{CH}$.  Then $\pi|_{E_P}\co E_P\to \Gamma_{gP}$ is a $(\lambda,\epsilon)$--quasi-isometry for some $\lambda\geq 1$ and $\epsilon>0$ depending only on $P\in \mc{P}$.  We thus have
\[ d_{\Gamma_{gP}}(\pi((0,x)),\pi((0,y)))\leq {6K}\lambda 2^m+\epsilon . \]
It follows that 
\[ d_{X_{CH}}((\pi((n,x)),\pi((n,y))))\leq 2^{m-n}(6K\lambda) +2^{-n}\epsilon +1\leq 2^{6K}(6K \lambda)+\epsilon +1, \]
and finally that
\[ d_{X_{CH}}(\pi((n,x)),\pi((m,y)))\leq 2^{6K}(6K\lambda)+\epsilon +1 + 6K. \]

   The constants $\lambda$ and $\epsilon$ depended on $P$, but there are only finitely many possibilities, so taking the maximum gives us a universal bound $B_2$ on the diameter of $\iota^{-1}(B)$ where $B$ is a ${3K}$--ball whose center is at depth at least $12K$.  Taking $\max\{B_1,B_2\}$ gives the desired universal bound for all ${3K}$--balls.  
\end{proof}
\begin{definition}
  The quasi-isometry from Proposition \ref{prop:gromovhyperbolic} gives an identification of $\partial{X_{CH}}$ with $\partial(G,\mc{P})$.  We use this identification and write $\overline{\mc{X}(N)} = \mc{X}(N)\cup \partial(G,\mc{P})$.
\end{definition}

\subsection{Collapsing spheres near infinity}
The space $\mc{X}(N)$ is Gromov hyperbolic (Proposition \ref{prop:gromovhyperbolic}) and the $0$--skeleton is $C$--dense (Lemma \ref{lem:bdddiam}).  We can therefore fix some $D\geq 1$ so that the conclusion of Lemma \ref{lem:contract nearby} holds, which means roughly that subcomplexes of the Rips complex $R=R_D(\mc{X}(N)^{(0)})$ can be contracted in their ``convex hulls''.

\begin{definition}\label{def:depth preserving map}
A continuous map $r:R_D^{(N+1)}\rightarrow\mc{X}(N)$ is \emph{depth preserving} if, for each $\sigma$ a simplex of $R_D$ and $I\subset[0,\infty)$ the smallest interval containing $\op{Depth}(\sigma^{0})$,
\[
\op{Depth}(r(\sigma))\subset\begin{cases}
[0,\sup I]&\inf I\le D\\
I&\inf I>D
\end{cases}
\]
\end{definition}  

\begin{lemma}\label{lem:fake retraction}
    There are equivariant proper maps $r: R^{(N+1)}\to \mc{X}(N)$ and  $\iota_k : \mc{X}(N)^{(k)}\to R^{(k)}$ for each $k=0,...,N+2$ satisfying the following:
	\begin{enumerate}
    \item $r$ is depth-preserving.
    \item If $k\leq N+1$, then $ r \circ \iota_k$ is the inclusion $\mc{X}(N)^{(k)}\subseteq\mc{X}(N)$.
    \end{enumerate}
\end{lemma}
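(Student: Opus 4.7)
The plan is to build $\iota_k$ first by a straightforward simplex-by-simplex construction, then construct $r$ by induction on the skeleta of $R$, arranging the fillings to respect the depth condition. By Lemma \ref{lem:bdddiam} the simplices of $\mc{X}(N)$ have uniformly bounded diameter, and we are free to enlarge $D$ (preserving the hypotheses of Lemma \ref{lem:contract nearby}) so that every such diameter is at most $D$. Then the vertex set of any simplex $\sigma\subset\mc{X}(N)^{(k)}$ spans a simplex of $R^{(k)}$, and I would define $\iota_k$ to send $\sigma$ onto it via the tautological identification. The resulting $\iota_k$ is $G$-equivariant by construction and proper by local finiteness of $\mc{X}(N)^{(0)}$.

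For $r$, I would first pin down its restriction to the subcomplex $S:=\iota_{N+1}(\mc{X}(N)^{(N+1)})\subseteq R^{(N+1)}$ to be the tautological identification back to $\mc{X}(N)^{(N+1)}$. This instantly yields $r\circ\iota_k=\mathrm{incl}$ for $k\leq N+1$ and also the depth-preserving condition on $S$. I would then extend $r$ to all of $R^{(N+1)}$ by induction on skeleta, starting from $r|_{R^{(0)}}$ equal to the inclusion of $\mc{X}(N)^{(0)}$. At the inductive step, given a $(k+1)$-simplex $\tau$ of $R$ not in $S$ with vertex-depth interval $I$, the task is to produce a filling whose image lies in an appropriate target subspace $Y_\tau\subseteq\mc{X}(N)$: when $\inf I\leq D$, take $Y_\tau:=\mc{X}(N)|_{\op{Depth}\leq\sup I}$, which deformation retracts onto $E_G$; when $\inf I>D$, the diameter bound forces all vertices of $\tau$ into a single horoball $H$ (since traveling between horoballs requires passing through depth zero, so two such vertices would be at distance $>2\inf I>D$), and I take $Y_\tau$ to be the slab $H|_{\op{Depth}\in I}$, which deformation retracts onto $E_P$. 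Both $E_G$ and $E_P$ are $N$-connected, being the universal covers of the $(N+1)$-skeleta of classifying spaces for $G$ and $P$. The inductive hypothesis guarantees $r(\partial\tau)\subseteq Y_\tau$, so an extension $\tau\to Y_\tau$ exists because $k+1\leq N+1$; I would choose one representative per $G$-orbit of $\tau$ and propagate by equivariance.

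The main obstacles are (i) arranging the fillings to stay within a bounded neighborhood of $\tau$'s vertex set to ensure properness of $r$; (ii) handling nontrivial stabilizers $G_\tau$ equivariantly; and (iii) verifying that the inductive depth condition truly propagates to subfaces. For (i), in the horoball-slab case boundedness is automatic, and in the base case it can be enforced using the controlled contractions of Lemma \ref{lem:contract nearby}; combined with $r|_{R^{(0)}}$ being the identity, this makes $r$ coarsely Lipschitz and hence proper. For (ii), the $N$-connectedness of $Y_\tau$ is enough for equivariant extensions via standard obstruction arguments, or one may first barycentrically subdivide $R$ so that finite stabilizers fix simplices pointwise. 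For (iii), a case analysis shows that every subface of a Case 2 simplex is again in Case 2 with a smaller interval, and that the Case 1 target $\mc{X}(N)|_{\op{Depth}\leq\sup I}$ contains the target assigned to each of its subfaces regardless of whether they are in Case 1 or Case 2; this is the key bookkeeping that makes the induction go through.
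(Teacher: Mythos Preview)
Your construction is essentially the paper's: define $\iota_k$ via the tautological vertex-matching (the paper uses $D\geq 1$ rather than enlarging $D$, but this is cosmetic), then build $r$ skeleton-by-skeleton, extending over each new simplex into an $N$-connected target---a horoball slab in the deep case, the shallow part of $\mc{X}(N)$ otherwise. Your case split on $\inf I$ matches Definition~\ref{def:depth preserving map} exactly and makes the bookkeeping in (iii) go through cleanly; the paper instead splits on $\sup I$, which leaves the intermediate range $\inf I\le D<\sup I$ slightly underspecified, so your version is if anything tidier here.

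The one soft spot is your treatment of properness in (i). The claim that ``in the horoball-slab case boundedness is automatic'' is backwards: the slab $H|_{\op{Depth}\in I}$ has bounded depth-width but is unbounded in the $E_P$ direction, so an arbitrary filling can wander far from the vertex set of $\tau$. It is rather in the \emph{shallow} case ($\inf I\le D$, hence $\sup I\le 2D$) that boundedness comes for free, since there are only finitely many $G$-orbits of such simplices. Lemma~\ref{lem:contract nearby} is also the wrong tool---it controls contractions inside the Rips complex, not inside $\mc{X}(N)$---and ``coarsely Lipschitz hence proper'' is not a valid implication without more. Fortunately you do not actually need bounded displacement: properness of $r$ follows directly from depth-preservation together with equivariance. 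Any compact $K\subset\mc{X}(N)$ lies at bounded depth, so by depth-preservation $r^{-1}(K)$ lies in the subcomplex of $R^{(N+1)}$ spanned by vertices of bounded depth; that subcomplex is locally finite with only finitely many $G$-orbits of simplices, and then equivariance plus properness of the $G$-action on $\mc{X}(N)$ forces $r^{-1}(K)$ to meet only finitely many simplices. (The paper does not spell out the properness argument either.)
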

\begin{proof}
The vertices of $R$ can be identified with the vertices of $\mc{X}(N)$.  In particular, since $D\geq 1$, any simplex of $\mc{X}(N)$ corresponds to a simplex of $R^{(N+2)}$ with the same vertices.  This correspondence gives us the inclusion $\iota_k\co \mc{X}(N)^{(k)}\to R^{(k)}$.

We will construct $r$ inductively. We use the identification already mentioned to define $r$ on $R^{(0)}$. Suppose $r$ has been extended to the a depth preserving map on the $j$-skeleton. Then, let $\sigma\subseteq R$ be an orbit representative $(j+1)$-simplex.  If $\sigma$ is present in $\mc{X}(N)$, we define $r$ to be the inverse of $\iota$ on $\sigma$.

Otherwise, let $I=[a,b]$ be as in the definition above. Suppose first that $b>D$. Then, all of the vertices must be in the same horoball and $\op{Depth}(r(\partial\sigma))\subseteq[a,b]$. So $r(\partial\sigma)$ is in a horoball. In particular, $r(\partial\sigma)$ is in the product of an $N$-connected space with $[a,b]$. Since $j\le N$, we can extend this map to $\sigma$. Suppose that $b\le D$. Then, $\op{Depth}(r(\partial\sigma))\le D$. But the points of depth at most $D$ is an $N$-connected space. This allows us to extend $r$ to $\sigma$ as desired.  We extend equivariantly to the simplices in the orbit of $\sigma$.
\end{proof}

The map $r$ from Lemma \ref{lem:fake retraction} will not be a quasi-isometry in general, but the fact that it is depth preserving will allow us to extend it continuously to the boundary.

\begin{lemma}\label{lem: sequences of unbounded depth in horoballs}
	Let $\{x_j\}_{j\in\N}$ and $\{y_j\}_{j\in\N}$ be sequences tending to infinity in $\mc{X}(N)$ such that
      \begin{enumerate}
      \item $\lim_{j\to\infty}\min\{\text{Depth}(x_j),\text{Depth}(y_j)\} = \infty$;
      \item for each $j$, $x_j$ and $y_j$ lie in the same horoball; and
      \end{enumerate}
  Then $\{x_j\}_{j\in\N}$ and $\{y_j\}_{j\in\N}$ have the same limit point in $\partial(G,\mc{P})$
\end{lemma}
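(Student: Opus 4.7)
The plan is to identify, for each $j$, a single boundary point $\xi_j\in\partial(G,\mc{P})$ which has large Gromov product with both $x_j$ and $y_j$. The natural choice is the parabolic fixed point associated to the common horoball $H_j$, which is well-defined because hypothesis (2) ensures $x_j$ and $y_j$ share a horoball. Fix a basepoint $o\in\mc{X}(N)$ at depth $0$, and let $\delta$ be a hyperbolicity constant for $\mc{X}(N)$ (Proposition~\ref{prop:gromovhyperbolic}).

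The main step is a uniform estimate: for every horoball $H\subset\mc{X}(N)$ with associated parabolic point $\xi_H\in\partial(G,\mc{P})$, and every point $z\in H$,
\[
(z\mid\xi_H)_o\;\geq\;\text{Depth}(z).
\]
Two ingredients feed into this. First, the depth function is $1$-Lipschitz on all of $\mc{X}(N)$: on any horoball $H=[0,\infty)\times_{2^{-t}}E_P$ this is immediate from the warped-product metric, and depth is defined to be $0$ off of the horoballs, so $d(o,z)\geq\text{Depth}(z)$ for every $z\in H$. Second, for $z=(n,p_z)\in H$ the ``vertical'' sequence $w_k:=(n+k,p_z)$ tends to $\xi_H$ (the combinatorial horoball, quasi-isometric to $H$ by Proposition~\ref{prop:gromovhyperbolic}, has a unique Gromov boundary point, and the depths of $w_k$ tend to infinity) and $d(z,w_k)\leq k$ along the vertical path. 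Plugging in,
\[
(z\mid w_k)_o\;=\;\tfrac{1}{2}\bigl(d(o,z)+d(o,w_k)-d(z,w_k)\bigr)\;\geq\;\tfrac{1}{2}\bigl(n+(n+k)-k\bigr)\;=\;n,
\]
so $(z\mid\xi_H)_o\geq n$ by the definition of the extended Gromov product.

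Applying the estimate to $x_j$ and $y_j$ in the common horoball $H_j$ yields $(x_j\mid\xi_j)_o\geq D_j$ and $(y_j\mid\xi_j)_o\geq D_j$, where $D_j:=\min\{\text{Depth}(x_j),\text{Depth}(y_j)\}\to\infty$ by hypothesis (1). The hyperbolic four-point inequality in $\overline{\mc{X}(N)}$ applied with $\xi_j$ as intermediate point gives
\[
(x_j\mid y_j)_o\;\geq\;\min\bigl\{(x_j\mid\xi_j)_o,\,(y_j\mid\xi_j)_o\bigr\}-2\delta\;\geq\;D_j-2\delta\;\longrightarrow\;\infty.
\]
Combining this with $(x_i\mid x_j)_o\to\infty$ (from the assumption that $\{x_j\}$ tends to infinity) and one more use of the hyperbolic inequality, with $x_j$ as intermediate point, produces $(x_i\mid y_j)_o\to\infty$ as $\min(i,j)\to\infty$; this is precisely the condition that $\{x_j\}$ and $\{y_j\}$ represent the same point of $\partial(G,\mc{P})$.

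The only substantive obstacle is the uniform depth estimate of the second paragraph; once it is in hand everything else is formal manipulation of Gromov products. Uniformity is automatic, since the inequality $(z\mid\xi_H)_o\geq\text{Depth}(z)$ involves only $\text{Depth}(z)$ and the ambient distance, with no hidden dependence on which horoball contains $z$ or on how far $H$ sits from the chosen basepoint.
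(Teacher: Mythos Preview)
Your proof is correct and takes a genuinely different route from the paper's. The paper argues by cases: if some horoball $H$ recurs infinitely often then both subsequences converge to its center; otherwise one passes to subsequences with $d(e,H_j)$ increasing, invokes uniform quasi-convexity of horoballs so that geodesics $[x_j,y_j]$ stay in $H_j$, and then reaches a contradiction from a bounded Gromov product via thin triangles. Your argument bypasses all of this by producing, for each $j$, the parabolic point $\xi_j$ as a common anchor and establishing the clean uniform estimate $(z\mid\xi_H)_o\geq\text{Depth}(z)$ directly from the $1$--Lipschitz property of depth and the vertical ray in the warped product. This is more direct, avoids both the case split and the contradiction, and makes the role of the depth hypothesis transparent. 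The paper's approach, on the other hand, uses only the ambient hyperbolic geometry and quasi-convexity of horoballs, without ever naming the parabolic fixed point or appealing to the extended Gromov product on $\overline{\mc{X}(N)}$; in that sense it is slightly more self-contained with respect to the boundary machinery. One small remark: your citation of Proposition~\ref{prop:gromovhyperbolic} for the fact that each horoball is quasi-isometric to a combinatorial horoball (hence has a single boundary point) is pointing to the right place, but strictly speaking that fact is extracted from the \emph{proof} of the proposition rather than its statement.
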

\begin{proof}
  Fix a basepoint $e\in\mc{X}(N)$ at depth $0$.  For $j\in\N$, let $H_j$ be the horoball containing $x_j$ and $y_j$.  If a horoball $H$ occurs infinitely often, then the common limit of the two sequences must be the horoball center.

  Otherwise, we can pass to subseqences so that $d(e,H_j)$ is strictly increasing with $j$.  The uniform quasi-convexity of horoballs implies that, for large $j$, any geodesic joining $x_j$ to $y_j$ is contained in $H_j$.

  Suppose for a contradiction that the two sequences do not converge to the same point at infinity.  Then there are indices $i_k,j_k\to\infty$ so that the Gromov products $\gprod{x_{i_k}}{y_{j_k}}{e}$ are bounded.  If $\gamma_k$ is a geodesic joining $x_{i_k}$ to $y_{j_k}$, then $d(e,\gamma_k)$ is likewise bounded.  Using thin triangles, it follows that if $\sigma_k$ is a geodesic joining $x_{i_k}$ to $x_{j_k}$, then $d(e,\sigma_k)$ is bounded, as are the Gromov products $\gprod{x_{i_k}}{x_{j_k}}{e}$.  But this contradicts the hypothesis that $\{x_j\}_{j\in\N}$ tends to infinity.
\end{proof}
The compactification of the Rips complex described in Definition \ref{def:compactification of rips} also gives a compactification of the $(N+1)$--skeleton of the Rips complex.
\begin{prop}\label{prop: r extends to compactification}
	The map $r:R^{(N+1)}\rightarrow\mathcal{X}(N)$ extends to a continuous map $\overline{R^{(N+1)}}\rightarrow\overline{\mathcal{X}(N)}$.  This restricts to the identity on $\partial(G,\mathcal{P})$.
\end{prop}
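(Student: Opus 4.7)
The plan is to extend $r$ by the identity on $\partial(G,\mathcal{P})$, yielding a candidate map $\bar{r}\co\overline{R^{(N+1)}}\to\overline{\mathcal{X}(N)}$, and then to verify continuity. Continuity at interior points is inherited from $r$, so the content is continuity at each $z\in\partial(G,\mathcal{P})$. The basic neighborhoods $V(z,n)$ of Definition \ref{def:compactification of rips} show that $\overline{R^{(N+1)}}$ is first-countable at $z$, so it suffices to prove: for any sequence $\{p_j\}_{j\in\N}\subseteq R^{(N+1)}$ converging to $z$ in $\overline{R^{(N+1)}}$, the sequence $\{r(p_j)\}$ converges to $z$ in $\overline{\mathcal{X}(N)}$.

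For each $j$, pick a simplex $\sigma_j$ of $R^{(N+1)}$ containing $p_j$, with vertex set $\{v_j^0,\ldots,v_j^{k_j}\}$; by Definition \ref{def:compactification of rips}, the hypothesis $p_j\to z$ forces $(v_j^i\mid z)_{y_0}\to\infty$ for every $i$. Let $a_j$ and $b_j$ be the minimum and maximum of the depths of the $v_j^i$. Because the vertices of any simplex of $R$ are pairwise within $D$ in $\mathcal{X}(N)$ and depth is $1$-Lipschitz, $b_j-a_j\le D$. After passing to a subsequence we are in one of two cases: either (A) $a_j\to\infty$, or (B) the sequence $\{a_j\}$ is bounded.

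In Case (A), once $a_j>D$ the vertices of $\sigma_j$ are pairwise within $D$ in $\mathcal{X}(N)$ and each has depth greater than $D$, so they all lie in a common horoball $H_j$ (an $\mathcal{X}(N)$-path of length $\le D$ starting at depth $>D$ cannot escape its horoball). The depth-preserving clause of Lemma \ref{lem:fake retraction} then forces $r(\sigma_j)\subseteq H_j$ with depth at least $a_j$, so $v_j^0$ and $r(p_j)$ both lie in $H_j$ at depth going to infinity, and Lemma \ref{lem: sequences of unbounded depth in horoballs} gives $r(p_j)\to z$. In Case (B), depth-preservation places $r(p_j)$ and each $v_j^i$ in a uniformly bounded-depth ``thick part'' $\mathcal{X}(N)^{\le C}$ on which $G$ acts cocompactly. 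Exploiting this cocompactness, the inductive construction of $r$ in Lemma \ref{lem:fake retraction} can be arranged so that the fillings on the finitely many $G$-orbits of thick-part simplices have diameter at most a constant $M$. Then $d_{\mathcal{X}(N)}(r(p_j),v_j^0)\le M$, so $(r(p_j)\mid z)_{y_0}\ge(v_j^0\mid z)_{y_0}-M\to\infty$, and hence $r(p_j)\to z$.

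The main obstacle is Case (B): depth-preservation alone does not pin $r(p_j)$ near $v_j^0$ once both points live in the non-compact thick region, so one must upgrade Lemma \ref{lem:fake retraction} to guarantee uniformly bounded-diameter fillings. This upgrade is internal to the proof of that lemma rather than to the present proposition, and it is routine given the finiteness of $G$-orbits of simplices in any bounded-depth subcomplex of $\mathcal{X}(N)$.
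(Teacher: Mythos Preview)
Your argument is correct and tracks the paper's proof closely: check sequential continuity at each boundary point by choosing a vertex $v_j$ of the carrier simplex $\sigma_j$ and splitting into a bounded-depth case (where $d(r(p_j),v_j)$ is bounded) and an unbounded-depth case (handled by Lemma~\ref{lem: sequences of unbounded depth in horoballs}).

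The one unnecessary detour is your proposed ``upgrade'' to Lemma~\ref{lem:fake retraction} in Case~(B). No modification is needed: Lemma~\ref{lem:fake retraction} already asserts that $r$ is $G$-equivariant, and the simplices of $R^{(N+1)}$ whose vertices lie at depth at most $C$ fall into finitely many $G$-orbits (since $G$ acts cocompactly on the thick part). Hence $\op{diam}(r(\sigma_j))$ is bounded by the maximum over those finitely many orbit representatives, automatically. This is exactly why the paper can assert without further comment that $d(r(a_j),r(v_j))$ is bounded in the bounded-depth case.
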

\begin{proof}
	It suffices to show that if $\{a_j\}_{j\in\N}$ is a sequence of points in $R^{(N+1)}$ limiting to $z\in\partial(G,\mathcal{P})$ then $\{r(a_j)\}$ also limits to $z$.
	
	For each $j$, let $v_j$ denote a vertex of a simplex containing $a_j$. Suppose first that the $r(a_j)$ have bounded depth. Then, $d(r(a_j),r(v_j))$ is bounded which implies $r(a_j)$ approaches $z$.

	Suppose the $r(a_j)$ have unbounded depth. For all but finitely many $j$, $r(a_j)$ and $v_j$ will be in the same horoball so Lemma \ref{lem: sequences of unbounded depth in horoballs} implies that $r(a_j)$ and $v_j$ converge to the same boundary point.
\end{proof}

  Now we prove the main proposition of the section.
  \begin{prop}\label{prop:conditioniii}
    For each $i=0,..., N$, every $z\in\partial(G,\mc{P})$ and every neighborhood $U$ of $z$ in $\overline{\mc{X}(N)}$, there is a neighborhood $V\subseteq U$ of $z$ such that every map $S^i\rightarrow V\setminus\partial(G,\mc{P})$ is nullhomotopic in $U\setminus\partial(G,\mc{P})$.
  \end{prop}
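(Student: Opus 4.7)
The plan is to transfer the conclusion of Proposition \ref{prop:condition iii for Rips complex} from the Rips complex $R = R_D(\mc{X}(N)^{(0)})$ to $\mc{X}(N)$ by pushing spheres through the ``inclusion'' $\iota_i$ of Lemma \ref{lem:fake retraction}, contracting in $R$, and then applying the quasi-retraction $r$. Note that the hypothesis $i \leq N$ ensures that any null-homotopy we need lives in $R^{(N+1)}$, which is where $r$ is defined.

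First, I would verify the Rips-complex statement restricts to $R^{(N+1)}$: the proof of Proposition \ref{prop:condition iii for Rips complex} only uses Lemma \ref{lem:contract nearby} to contract $i$--complexes in the $(i{+}1)$--skeleton, so for $i\le N$ the whole argument takes place inside $R^{(N+1)}$. In particular, for $z\in\partial(G,\mc{P})$ and any neighborhood $U'$ of $z$ in $\overline{R^{(N+1)}}$, there is a neighborhood $V'\subseteq U'$ such that every map $S^i\to V'\setminus\partial(G,\mc{P})$ is null-homotopic in $U'\setminus\partial(G,\mc{P})$.

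Now, given $U\subseteq \overline{\mc{X}(N)}$ a neighborhood of $z$, Proposition \ref{prop: r extends to compactification} makes $r^{-1}(U)$ a neighborhood of $z$ in $\overline{R^{(N+1)}}$; apply the previous paragraph to obtain a neighborhood $V'\subseteq r^{-1}(U)$ of $z$. Using the explicit basis of neighborhoods from Definition \ref{def:compactification of rips}, $V'$ contains all points whose Gromov product with $z$ exceeds some threshold $T$; since $\mc{X}(N)$ and the Rips complex share the same $0$--skeleton and the topology at $z$ on both compactifications is controlled by Gromov products at vertices, I can choose $V\subseteq U\subseteq\overline{\mc{X}(N)}$ a neighborhood of $z$ small enough that, after simplicial approximation, any map $S^i\to V\setminus\partial(G,\mc{P})$ into $\mc{X}(N)^{(i)}$ uses only vertices of Gromov product with $z$ greater than $T$.

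Given $f\co S^i\to V\setminus\partial(G,\mc{P})$, simplicial approximation (applied after a fine subdivision of $S^i$) yields $\tilde f\co S^i\to\mc{X}(N)^{(i)}$ homotopic to $f$ in $V\setminus\partial(G,\mc{P})$. The composite $\iota_i\circ\tilde f$ lands in $V'\setminus\partial(G,\mc{P})$ by the choice of $V$, so by the Rips statement there is a null-homotopy $H\co D^{i+1}\to r^{-1}(U)\setminus\partial(G,\mc{P})$. Applying $r$, since $r$ is depth-preserving and proper, $r\circ H$ maps $D^{i+1}$ into $U\setminus\partial(G,\mc{P})$. Because $r\circ\iota_i$ is the inclusion $\mc{X}(N)^{(i)}\hookrightarrow\mc{X}(N)$, the boundary restriction of $r\circ H$ is $\tilde f$. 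Concatenating with the simplicial approximation homotopy gives the desired null-homotopy of $f$ in $U\setminus\partial(G,\mc{P})$.

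The main obstacle is step three: arranging that $V\subseteq \overline{\mc{X}(N)}$ is small enough that the vertices used by simplicial approximations in $V\setminus\partial(G,\mc{P})$ already lie in the prescribed Rips neighborhood $V'$. This is really a matter of comparing the two basis-of-neighborhood definitions at $z$, both phrased via Gromov products on a common $0$--skeleton, and should follow cleanly from the definitions together with the quasi-isometry of Proposition \ref{prop:gromovhyperbolic}.
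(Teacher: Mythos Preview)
Your proposal is correct and follows essentially the same strategy as the paper's proof: push spheres into the Rips complex via $\iota$, contract there using Proposition~\ref{prop:condition iii for Rips complex}, and pull the null-homotopy back via $r$ using Proposition~\ref{prop: r extends to compactification}. The only difference is bookkeeping: where you handle your ``main obstacle'' by comparing neighborhood bases through Gromov-product thresholds, the paper instead works throughout with interiors of full subcomplexes (on both the $\mc{X}(N)$ side and the Rips side), which makes the simplicial-approximation step automatic since a map into the interior of a subcomplex can be homotoped to its $i$--skeleton without leaving it.
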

  \begin{proof}
     Given a neighborhood $U\subseteq\overline{\mathcal{X}(N)}$ of $z$, let $v(U)$ be the set of vertices whose closed stars are contained in $U$, let $U_1$ be the interior of the full subcomplex on the vertices $v(U)$, and let $\tilde{U}_1 = U_1\cup\left(U\cap \partial(G,\mc{P})\right)$.  Then we claim $\tilde{U}_1\subset U$ is still an open neighborhood of $z$ in $\mc{X}(N)$.  The intersection with $\mc{X}(N)$ is open by construction so we just need to check $\tilde{U}_1$ contains an open neighborhood of any $w\in U\cap \partial(G,\mc{P})$.  Suppose there is a sequence $\{x_i\}_{i\in \N}$ of vertices of $\mc{X}(N)\setminus U_1$ which converges to $w\in U\cap\partial(G,\mc{P})$.  All but finitely many of these vertices is in $U\setminus U_1$, so all but finitely many of these vertices have closed stars which meet $\mc{X}(N)\setminus U$.  In particular there is a sequence of vertices $\{x_i'\}_{i\in \N}$ outside $U$, but converging to $w$, contradicting our choice of $U$. 

     Let $r$ be as in Lemma \ref{lem:fake retraction} and let $\iota=\iota_{N+2}\co\mc{X}(N)\rightarrow R^{(N+2)}$ be the inclusion of $\mc{X}(N)$ as a subcomplex of the Rips complex.

     Then $r^{-1}(\tilde{U}_1)$ is an open subset of $\overline{R^{(N+1)}}$.  By Proposition \ref{prop:condition iii for Rips complex}, there is a neighborhood $W\subseteq r^{-1}(U_1)$ of $z$ such that spheres of dimension up to $N$ in $W\setminus\partial(G,\mathcal{P})$  are contractible in $r^{-1}(U_1)\setminus\partial(G,\mathcal{P})$.

     Let $v(W)$ be the set of vertices of $W$ whose closed stars in $R^{(N+1)}$ are contained in $W$.  Let $W_1$ be the interior of the full subcomplex of $R^{(N+2)}$ on the vertices $v(W)$, and let $\tilde{W}_1 = W_1\cup \left(W\cap \partial(G,\mc{P})\right)$.  A similar argument to that in the first paragraph shows that $\tilde{W}_1$ is an open neighborhood of $z$ in $R^{(N+2)}$.
Let $V' = \iota^{-1}(W_1)$, let $V_1$ be the interior of the subcomplex of $\mc{X}(N)$ made of cells whose closed stars are in $V'$, and let $V = V_1\cup \left(W\cap\partial(G,\mc{P})\right)$.  Arguing again as in the first paragraph, $V$ is still an open neighborhood of $z$ in $\mc{X}(N)$.

     We establish that $V\subseteq U$.  Indeed, $U$ contains the full subcomplex in $\mc{X}(N)$ spanned by $W\cap \mc{X}(N)^{(0)}$.  This subcomplex contains the intersection of  $W_1$ with  $\mc{X}(N)$ in $R^{(N+2)}$, which is $V'$.  Thus $V\subseteq U$.

     Next we show that maps from $S^i$ into $V\setminus\partial(G,\mc{P})$ are null-homotopic in $U\setminus\partial(G,\mc{P})$.  Let $\alpha\co S^i\to V\setminus\partial(G,\mc{P})$.  We can homotope $\alpha$ in $V$ to have image in the $i$--skeleton of $V' = \iota^{-1}(W_1)$.  Let $\iota_{N+1}\co \mc{X}(N)^{(N+1)}\to R^{(N+1)}$ be as in Lemma \ref{lem:fake retraction}.  Then $\iota_{N+1}\circ\alpha$ has image inside $W$.  Thus there is a homotopy $h_t$ from $\iota_{N+1}\circ\alpha$ to a constant, so that the homotopy occurs completely inside $r^{-1}(U_1)$.  Applying $r$ to the homotopy, we get a homotopy $r\circ h_t$ from $\alpha$ to a constant occurring entirely inside $U_1\subseteq U\setminus\partial(G,\mc{P})$.  
  \end{proof}

\subsection{A $Z$--set when $(G,\mc{P})$ is type $F$}
  
In this section we show that if $(G,\mc{P})$ is type $F$, then $\partial(G,\mc{P})$ gives an equivariant $Z$--set compactification of $\mc{X}(N)$ for large $N$.  This fact isn't needed for the proof of Theorem \ref{thm:maintheorem}, but will be used in Sections \ref{sec:pdpair} and \ref{sec:dimension}.

We observe first that, assuming that $(G,\mc{P})$ has type $F$, we may choose $E_G$ and each $E_P$ in \ref{def:curlyX} to be finite complexes, so that, for some $N$ and all $i\geq N$,  $\mc{X}(i)=\mc{X}(N)$. It follows that this space is contractible.
\begin{theorem}\label{thm:zset}
  If $(G,\mc{P})$ is relatively hyperbolic and type $F$, and $\mc{X}(N)$ is chosen as above, then $\partial(G,\mc{P})$ is a $Z$--set in $\overline{\mc{X}(N)}$.
\end{theorem}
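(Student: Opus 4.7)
Plan: My strategy has two parts: first verify that $\overline{\mc{X}(N)}$ is a compact metrizable ANR, and then promote the local connectedness statement of Proposition \ref{prop:conditioniii} to the $Z$-set property via a standard criterion in ANR theory.

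For the ANR property: since $(G,\mc{P})$ has type $F$, I take $E_G$ and each $E_P$ in Definition \ref{def:curlyX} to be finite classifying spaces; then, as already noted, $\mc{X}(i)$ stabilizes to a finite-dimensional contractible locally finite simplicial complex $\mc{X}(N)$ for all sufficiently large $N$. By Proposition \ref{prop:gromovhyperbolic} this is a proper Gromov hyperbolic metric space, so its compactification $\overline{\mc{X}(N)} = \mc{X}(N) \cup \partial(G,\mc{P})$ is compact and metrizable. Local contractibility at interior points is immediate from the simplicial structure; at boundary points, Proposition \ref{prop:conditioniii}, applied for $N$ arbitrarily large (valid in the type $F$ setting, as $\mc{X}(N)$ stabilizes), yields local $k$-connectivity for every $k$. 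Granting finite topological dimension, a classical theorem of Borsuk then gives the ANR property.

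For the $Z$-set conclusion, I plan to invoke an Ancel--Siebenmann style criterion: a closed subset $Z$ of a compact metrizable ANR $X$ is a $Z$-set provided that, for every $z \in Z$ and every neighborhood $U$ of $z$, there is a smaller neighborhood $V$ of $z$ such that every map $S^k \to V \setminus Z$ is null-homotopic in $U \setminus Z$, for every $k$. This is precisely the content of Proposition \ref{prop:conditioniii}, applied to $\overline{\mc{X}(N)}$ with $Z = \partial(G,\mc{P})$, so the $Z$-set conclusion will follow immediately once the ANR step is in place.

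The main obstacle I foresee is ensuring the finite topological dimension of $\overline{\mc{X}(N)}$, which reduces to a finite-dimensional bound on $\partial(G,\mc{P})$. A Bestvina--Mess-style argument using the $N$-connected cusped space and compactly supported cohomology (cf.\ Proposition \ref{prop:topiso}) should furnish such a bound in terms of the cohomological dimension of the pair; alternatively one could appeal to anticipated results from Section \ref{sec:dimension}. A secondary concern is that the "take $N$ arbitrarily large" step really must give uniform local $k$-connectivity of the same space $\overline{\mc{X}(N)}$ at every boundary point for all $k$; this works precisely because in the type $F$ setting $\mc{X}(N)$ stabilizes, so the ambient space does not change as we vary $k$.
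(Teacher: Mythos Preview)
Your overall strategy is essentially the paper's: verify the hypotheses of a Bestvina--Mess criterion (stated in the paper as Proposition~\ref{prop:BM 2.1}) and conclude both that $\overline{\mc{X}(N)}$ is an ANR and that $\partial(G,\mc{P})$ is a $Z$-set. The paper does this in one stroke rather than via your two-step split (ANR via Borsuk, then $Z$-set). Your split is workable but slightly more laborious; in particular, the local contractibility you claim at boundary points does not follow directly from Proposition~\ref{prop:conditioniii}, which only treats spheres lying in $V\setminus\partial(G,\mc{P})$---you would still need an intermediate step such as Proposition~\ref{prop:weak BM 2.1}(2) to allow spheres that touch the boundary.

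The genuine gap is the finite-dimensionality of $\partial(G,\mc{P})$, which you correctly identify as the main obstacle but do not resolve. Neither of your proposed routes works. A Bestvina--Mess dimension estimate of the form $\dim Z < \dim X$ already presupposes the $Z$-set structure, and a purely cohomological argument via Proposition~\ref{prop:topiso} can at best yield vanishing of $\check{H}^k(\partial(G,\mc{P}))$ above the cohomological dimension of the pair; vanishing of \v{C}ech cohomology in high degrees does not bound the covering dimension of an arbitrary compactum. The results of Section~\ref{sec:dimension} likewise presuppose the $Z$-set property (through Lemma~\ref{lem: dim of boundary/reg cochains}), so appealing to them is circular. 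The paper closes this gap with Lemma~\ref{lem:dahmanilemma}, an independent result of Dahmani that the Bowditch boundary of any relatively hyperbolic pair has finite topological dimension, proved by analyzing the boundary of the coned-off Cayley graph. That lemma is the missing ingredient in your plan.
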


In general, it is difficult to verify that a closed subset is a $Z$--set. To do this, we will use the following from \cite{BM91}.

\begin{prop}\label{prop:BM 2.1}\cite[Proposition 2.1]{BM91}
Suppose $X$ is compact metrizable and that $F\subseteq X$ is closed such that the following conditions are satisfied.
\begin{enumerate}
\item $F$ has empty interior in X
\item $\dim X=n<\infty$
\item For each $i=0,1,...,n$, every $z\in F$ and every neighborhood $U$ of $z$, there is a neighborhood $V\subseteq U$ of $z$ such that a map $S^i\rightarrow V\setminus F$ is nullhomotopic in $U\setminus F$
\item $X\setminus F$ is an ANR
\end{enumerate}
Then, $X$ is an ANR and $F\subseteq X$ is a $Z$-set.
\end{prop}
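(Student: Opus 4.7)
My plan is to split the conclusion into (a) $X$ is an ANR and (b) $F \subseteq X$ is a $Z$-set in $X$, handling them in that order.

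For (a), I would invoke the classical theorem (see Hu, \emph{Theory of Retracts}) that a compact, finite-dimensional metric space is an ANR if and only if it is LC$^n$ for $n = \dim X$. Condition (4) immediately gives that $X$ is LC$^n$ at every point of $X\setminus F$ (indeed locally ANR there). For $z \in F$ with neighborhood $U$, I take the $V \subseteq U$ produced by (3) and must promote its conclusion from spheres in $V\setminus F$ to arbitrary spheres in $V$. Given $\alpha: S^i \to V$ with $i \le n$, the aim is to approximate $\alpha$ by a map $\alpha': S^i \to V\setminus F$ close enough that a small homotopy inside $V$ connects the two, then contract $\alpha'$ inside $U\setminus F$ using (3) and splice the two homotopies.

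For (b), once $X$ is known to be an ANR I would use the standard criterion that $F$ is a $Z$-set in $X$ if and only if, for every open cover $\mathcal{U}$ of $X$, there exists a map $f: X \to X\setminus F$ which is $\mathcal{U}$-close to the identity. To construct such an $f$, take a fine open cover of $X$ refining $\mathcal{U}$, with nerve $|N|$ of dimension $\le n$ (using (2)), and a Dugundji canonical nerve map $\nu: X \to |N|$, which exists because $X$ is ANR from part (a). Build $\mu: |N| \to X\setminus F$ inductively: on vertices, send each vertex $v$ of $N$, corresponding to an open set $U_v$, into $U_v \cap (X\setminus F)$, which is nonempty by (1); on each $k$-simplex $\sigma$ for $1 \le k \le n$, the restriction of the previously constructed $\mu$ to $\partial \sigma$ is a map of a $(k-1)$-sphere into a small open set missing $F$, and condition (3) at sphere-dimension $k-1$ provides an extension over $\sigma$ inside a slightly larger open set still missing $F$. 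Then $f = \mu \circ \nu$ has the required $\mathcal{U}$-closeness provided the original cover is chosen sufficiently fine.

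The main technical obstacle, which is the heart of the argument, is the pushoff in (a): approximating an arbitrary continuous $\alpha: S^i \to V$ by a map into $V\setminus F$ with a controlled joining homotopy. I expect this to require a nested sequence of neighborhoods $V = V_0 \supseteq V_1 \supseteq \cdots \supseteq V_i$ (each pair satisfying (3) at successive dimensions), together with an inductive cellular construction on a fine triangulation of $S^i$: vertices are nudged into $X\setminus F$ using (1), and the pushoff is extended over the $k$-skeleton using (3) applied at dimension $k-1$ to fill the spheres produced by the previous stage, while the ANR extension property from (4) is used to realize these choices as a global homotopy in $X$ rather than a mere pointwise approximation. Bookkeeping the $V_k$ so that the final pushoff stays in $V$ and the homotopy stays in $U$ is the delicate point.
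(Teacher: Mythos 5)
The paper does not actually prove this proposition; it is stated with a citation to Bestvina--Mess \cite[Proposition 2.1]{BM91} and used as a black box, so there is no ``paper's own proof'' to compare against. Your outline tracks the original argument of Bestvina and Mess reasonably closely: both first establish that $X$ is LC$^n$ (and hence, by the classical theorem for finite-dimensional compact metric spaces, an ANR) by upgrading the hypothesis on spheres in $V\setminus F$ to spheres in $V$, and both establish the $Z$-set property via pushoffs of maps from finite polyhedra into $X\setminus F$. Indeed, the two intermediate statements Bestvina and Mess prove to this end are precisely what this paper records as Proposition \ref{prop:weak BM 2.1}: an instantaneous homotopy of a finite complex off $F$, and the sphere-filling upgrade $S^i\to V$ nullhomotopic in $U$.

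A few points need tightening. In your step (b), the inductive extension over a $k$-simplex of the nerve invokes condition (3) to fill the boundary $(k-1)$-sphere, but (3) is only available near points of $F$: for cover elements disjoint from $F$ you must instead use condition (4), and simplices whose vertices straddle both regimes require careful bookkeeping of which mechanism is used. You also need to choose the cover with order $\le n+1$ so the nerve has dimension $\le n$, which is legitimate but uses covering-dimension theory. In the pushoff for (a), your final sentence appeals to an ``ANR extension property'' to realize the pointwise nudges as a genuine homotopy; be sure this is the ANR property of $X\setminus F$ from (4) and not an ANR property of $X$ itself, since the latter is what you are trying to establish. These are exactly the pressure points the original proof navigates, and your identification of where the difficulty lies is accurate, but the proposal as written leaves these inductive constructions at the level of a plan rather than a proof.
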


Before we prove Theorem \ref{thm:zset} we need the following consequence of a result of Dahmani.
\begin{lemma}\label{lem:dahmanilemma}
  For any relatively hyperbolic pair $(G,\mc{P})$, the dimension of $\partial(G,\mc{P})$ is finite.
\end{lemma}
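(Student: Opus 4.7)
The plan is to invoke Dahmani's theorem [D03] bounding the topological dimension of the Bowditch boundary. If one had to reconstruct the argument, the natural route is to work with the combinatorial cusped space $X_{CH}$ and cover $\partial(G,\mc{P})$ by shadows of balls in $X_{CH}$ based at a fixed vertex, thereby reducing the dimension bound to a uniform multiplicity bound for these shadow covers as their radius tends to infinity. Since such shadows form a neighborhood basis at each point of $\partial(G,\mc{P})$, a uniform multiplicity bound $M$ on the covers yields $\dim\partial(G,\mc{P})\le M-1$.

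I would first handle the ``hyperbolic part'' of $X_{CH}$ (the part that lies in a fixed neighborhood of the Cayley graph of $G$, where the $G$-action is cocompact) by the standard argument for hyperbolic groups of bounded valence. The real difficulty is inside the combinatorial horoballs: the valence of $X_{CH}$ grows like $2^n$ at depth $n$ because the horizontal edges at depth $n$ connect all pairs $(n,v),(n,w)$ with $d_\Gamma(v,w)\le 2^n$. To control multiplicity there, one exploits the exponential shrinkage of visual diameters with depth (a shadow based at a depth-$n$ vertex has visual size comparable to $2^{-n}$): although very many horoball vertices exist at each fixed depth, only uniformly many of their shadows can overlap any given visual ball at infinity. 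Combining these two regimes yields a uniform multiplicity bound for shadow covers across all of $X_{CH}$, and hence finite covering dimension for $\partial(G,\mc{P})$.

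The main obstacle is the quantitative horoball estimate balancing the exponentially growing combinatorial valence against the exponentially shrinking visual size, which requires careful Gromov-product computations using the specific combinatorial horoball structure of Definition \ref{def: comb horoball}. Dahmani carries this analysis out in [D03], so for the purposes of the present lemma we simply cite his result, noting that only qualitative finite-dimensionality (not an explicit bound) is needed in the sequel.
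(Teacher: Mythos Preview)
Your proposal and the paper's proof both ultimately defer to Dahmani \cite{D03}, but they route through different model spaces, and your citation is imprecise in a way worth flagging.

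The paper does not work in the cusped space $X_{CH}$ at all. Instead it invokes \cite[Lemma 3.7]{D03}, whose \emph{proof} shows that the Gromov boundary of the \emph{coned-off Cayley graph} is finite-dimensional, and then observes that $\partial(G,\mc{P})$ is the union of that boundary with the countable set of parabolic fixed points. Since adding a countable set cannot raise topological dimension, finite-dimensionality of $\partial(G,\mc{P})$ follows immediately. This is a two-line argument once the right reference is in hand.

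Your shadow-cover outline in $X_{CH}$ is a plausible alternative strategy, but the specific difficulty you isolate---exponentially growing valence at depth $n$ inside a combinatorial horoball, to be balanced against exponentially shrinking visual diameter---and the ``quantitative horoball estimate'' you defer to Dahmani are \emph{not} what \cite[Lemma 3.7]{D03} actually does. Dahmani works in a fine hyperbolic graph (essentially the coned-off graph), where the infinite-valence vertices are the cone points over peripheral cosets and the multiplicity analysis is of a different character; there are no horoballs in his argument. So while your sketch could in principle be completed, the hand-off to \cite{D03} for the horoball estimate is not justified as written. The paper's route via the coned-off boundary plus the countable-set observation is both shorter and more directly supported by what the cited reference contains.
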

\begin{proof}
  The proof of \cite[Lemma 3.7]{D03} shows that the Gromov boundary of the coned-off Cayley graph is finite dimensional. Since $\partial(G,\mc{P})$ is the union of the boundary of the coned-off Cayley graph with a countable set, $\partial(G,\mc{P})$ is finite dimensional.
\end{proof}

We now prove Theorem \ref{thm:zset}.
\begin{proof}
  Let $n = \dim(\mc{X}(N))\leq N+2$.
  We verify that the hypotheses of Proposition \ref{prop:BM 2.1} hold for $\partial(G,\mc{P})\subseteq\overline{\mc{X}(N)}$. The first, second, and fourth conditions are clear.  Lemma \ref{lem:dahmanilemma} shows that $\partial(G,\mc{P})$ is finite dimensional, so $\overline{\mc{X}(N)}$ is also finite dimensional.  The third condition follows from Proposition \ref{prop:conditioniii} (applied to $\mc{X}(N+2)=\mc{X}(N)$) and the fact that $\mc{X}(N)$ is contractible.
\end{proof}

\subsection{A $Z_{N-1}$--set otherwise}
We return to the setting in which $(G,\mc{P})$ is assumed only to be $F_\infty$ and not type $F$.
In the papers \cite{GS73,GS74}, Geoghegan and Summerhill propose the notion of a $Z_k$--set.
\begin{definition}
  A closed subset $F$ of a space $X$ is a \emph{$Z_k$--set} if, for every nonempty $k$--connected open $U\subseteq X$, the set $U\setminus F$ is also nonempty and $k$--connected.
\end{definition}

Bestvina and Mess's proof of Proposition \ref{prop:BM 2.1} in \cite{BM91} gives the following weaker result that will be important when working with $F_\infty$ groups.
\begin{prop}\label{prop:weak BM 2.1}
Suppose $X$ is compact metrizable and $F\subseteq X$ is closed. Suppose conditions (1) and (4) of Proposition \ref{prop:BM 2.1} are satisfied and that condition (3) holds for some $n$. Then,
\begin{enumerate}
\item \cite[Lemma 2.4]{BM91} Let $P$ be a finite simplicial complex of dimension $\le n$, and let $f\co P\to X$ be a map.  Then there is a homotopy $h\co P\times I\to X$ so that $h(-,0)=f$ and $h(-,t)$ has image in $X\setminus F$ for all $t>0$.
\item \cite[Lemma 2.5]{BM91} For each $i=0,1,...,n$, each $z\in F$ and each neighborhood $U$ of $z$, there is a neighborhood $V\subseteq U$ of $z$ such that every map $S^i\rightarrow V$ is nullhomotopic in $U$.
\end{enumerate}
\end{prop}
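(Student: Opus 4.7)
The plan is to follow the proof scheme of Bestvina--Mess's Proposition \ref{prop:BM 2.1} (from \cite{BM91}), observing that their argument for the two intermediate lemmas (2.4) and (2.5) only uses condition (3) up to the dimension of the complex or sphere being considered, and does not require the global finite-dimensionality of $X$ (condition (2) of Proposition \ref{prop:BM 2.1}). So essentially we will isolate the arguments for those two lemmas from the Bestvina--Mess proof and observe that they go through under the weaker hypotheses.

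For conclusion (1), I would proceed by induction on the skeleta of a triangulation $P = P^{(0)}\subseteq P^{(1)}\subseteq\cdots\subseteq P^{(n)}$, subdividing as finely as needed so that $f$ sends each closed simplex into a neighborhood as small as we like. At the $0$--skeleton one uses condition (1), the empty interior of $F$, to perturb each vertex of $P^{(0)}$ off $F$ by a homotopy of small support. Suppose inductively that we have defined a homotopy $h$ on $P^{(k-1)}\times I$ which agrees with $f$ at time $0$ and lies in $X\setminus F$ at all positive times, with images controlled inside small neighborhoods. For a $k$--simplex $\sigma$ of $P^{(k)}$ with $k\leq n$, the boundary $\partial\sigma$ has been pushed into a small neighborhood $V\setminus F$; then condition (3) for $i=k-1$ provides a nullhomotopy of $h|_{\partial\sigma\times\{t\}}$ in a slightly larger $U\setminus F$, and the ANR property (4) of $X\setminus F$ is used to fit these nullhomotopies together continuously (Hanner's theorem / homotopy extension in ANRs) so that the resulting extension to $\sigma\times I$ stays arbitrarily close to $f(\sigma)$ and misses $F$ for $t>0$. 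Induction up to $k=\dim P\leq n$ finishes the construction.

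For conclusion (2), fix $z\in F$, a neighborhood $U$ of $z$, and $i\leq n$. First apply condition (3) at level $i$ to obtain $V_0\subseteq U$ such that every map $S^i\to V_0\setminus F$ is nullhomotopic in $U\setminus F$. By continuity and the open-cover characterization of neighborhoods in a compact metric space, choose $V\subseteq V_0$ small enough that any map $\alpha\co S^i\to V$ has a small neighborhood of its image inside $V_0$. Applying conclusion (1) (with $P=S^i$ of dimension $i\leq n$) to $\alpha$ yields a homotopy $h\co S^i\times I\to X$ with $h(-,0)=\alpha$, $h(-,t)\subset X\setminus F$ for $t>0$, and with image in $V_0$ (this is the content of "arbitrarily small support"). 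Then $h(-,1)$ has image in $V_0\setminus F$, hence is nullhomotopic in $U\setminus F\subseteq U$; concatenating $h$ with that nullhomotopy gives the desired nullhomotopy of $\alpha$ inside $U$.

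The main technical obstacle, just as in \cite{BM91}, is the control of the size of all the intermediate homotopies: at every inductive step in (1) we must verify that the perturbation provided by (3) and (4) stays in a prescribed small neighborhood of $f(\sigma)$, so that later inductive steps and the application in (2) remain valid. This is handled by the standard device of choosing, before triangulating or pushing, a nested sequence of neighborhoods via condition (3) and a compatible covering of $f(P)$ in the ANR $X\setminus F$, and by subdividing $P$ finely enough that each simplex is carried into the appropriate small neighborhood; the arguments of \cite[Lemmas 2.4--2.5]{BM91} carry through verbatim once this bookkeeping is set up.
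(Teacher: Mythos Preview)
Your proposal is correct and takes essentially the same approach as the paper: the paper does not give its own proof here but simply records that Bestvina--Mess's arguments for \cite[Lemmas 2.4 and 2.5]{BM91} go through under the weaker hypotheses, exactly as you observe. Your sketch of how those arguments run (skeletal induction using condition (3) and the ANR property for part (1), then pushing a sphere off $F$ via part (1) and contracting via condition (3) for part (2)) is more detail than the paper itself provides, but it is faithful to the cited source.
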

The second part of Proposition \ref{prop:weak BM 2.1} differs from Proposition \ref{prop:conditioniii} in that the spheres under consideration are allowed to meet the boundary.

The first part of Proposition \ref{prop:weak BM 2.1} implies that $F\subseteq X$ is a $Z_{n-1}$--set.  Indeed, suppose that $U$ is $(n-1)$--connected, and that $f_0\co S^i\to U\setminus F$ is any map, where $i\leq (n-1)$.  Take $P=S^i\times I$, and let $f\co P\to U$ be a null-homotopy of $f$.   Now apply the first part of Proposition \ref{prop:weak BM 2.1} to obtain $h$.
For some small $\epsilon$, $h|_{P\times(0,\epsilon]}$ has image contained in $U\setminus F$, and thus gives a null-homotopy of $P$ in $U$ which misses $F$.

Using Proposition \ref{prop:conditioniii} and Lemma \ref{lem:dahmanilemma} as in the proof of Theorem \ref{thm:zset} we obtain the following.
\begin{cor}\label{cor:weak Zset}
  The Bowditch boundary $\partial(G,\mc{P})$ is a $Z_{N-1}$--set in $\overline{\mc{X}(N)} = \mc{X}(N)\cup \partial(G,\mc{P})$.
\end{cor}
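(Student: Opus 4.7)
The plan is to apply Proposition \ref{prop:weak BM 2.1} with $X = \overline{\mc{X}(N)}$, $F = \partial(G,\mc{P})$, and $n = N$, then invoke the observation in the paragraph immediately following that proposition, which extracts the $Z_{n-1}$-set conclusion from its first conclusion. Since the argument is modeled on the one used to prove Theorem \ref{thm:zset}, the real content has already been established; what remains is verification of hypotheses and one careful pushing argument.

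First I would verify that $X$ is compact and metrizable: $\mc{X}(N)$ is a proper Gromov hyperbolic space by Proposition \ref{prop:gromovhyperbolic} and Lemma \ref{lem:bdddiam}, so its Gromov compactification is compact metrizable by standard facts. Condition (1) of Proposition \ref{prop:BM 2.1}, that $F$ has empty interior, is immediate since $\mc{X}(N)$ is open and dense in $X$. Condition (4), that $X\setminus F = \mc{X}(N)$ is an ANR, is immediate because $\mc{X}(N)$ is a locally finite simplicial complex. Condition (3) with $n = N$ is exactly the content of Proposition \ref{prop:conditioniii}. (Finite dimensionality of $X$, although not required by Proposition \ref{prop:weak BM 2.1}, is still available from Lemma \ref{lem:dahmanilemma} together with the bound $\dim\mc{X}(N) \le N+2$.)

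With these hypotheses in hand, Proposition \ref{prop:weak BM 2.1}(1) provides, for any map from a simplicial complex of dimension at most $N$ into $X$, a homotopy that pushes the map into $X\setminus F$ at all positive times. To derive the $Z_{N-1}$-set property, let $U\subseteq X$ be a nonempty $(N-1)$-connected open set and let $f_0\co S^i\to U\setminus F$ be any map with $i\le N-1$. Using $(N-1)$-connectedness of $U$, extend $f_0$ to a null-homotopy $f\co S^i\times I \to U$; then apply Proposition \ref{prop:weak BM 2.1}(1) to produce $h\co (S^i\times I)\times I \to U$ whose restriction to every positive second parameter misses $F$. For a sufficiently small $\epsilon>0$, the slice $h(-,\epsilon)$ is a null-homotopy of $f_0$ lying entirely in $U\setminus F$, and evaluating $h$ at a single positive time on a single point also shows that $U\setminus F$ is nonempty.

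The main potential pitfall at this stage is the temptation to invoke local condition (3) from Proposition \ref{prop:conditioniii} directly; instead one must use the global connectedness of $U$ together with the push-off homotopy, as above, to get the correct definition of a $Z_{N-1}$-set.
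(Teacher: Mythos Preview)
Your proposal is correct and follows essentially the same route as the paper: verify the hypotheses (1), (3), (4) of Proposition~\ref{prop:BM 2.1} via Proposition~\ref{prop:conditioniii} and the evident properties of $\mc{X}(N)$, then invoke the paragraph after Proposition~\ref{prop:weak BM 2.1} to extract the $Z_{N-1}$--set conclusion. One small slip: Proposition~\ref{prop:weak BM 2.1}(1) only gives $h\co (S^i\times I)\times I\to X$, not into $U$; you then use compactness and openness of $U$ to get $h|_{(S^i\times I)\times[0,\epsilon]}$ into $U$, and a standard collar trick (gluing $h|_{S^i\times\{0\}\times[0,\epsilon]}$ to $h(-,\epsilon)$) produces an honest null-homotopy of $f_0$ in $U\setminus F$---the paper's own exposition is equally informal on this point.
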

\begin{remark}
  The astute reader may have noticed that the construction and properties of $\mc{X}(N)$ are the same if $(G,\mc{P})$ is only type $F_{N+1}$ rather than type $F_\infty$.  In particular, Corollary \ref{cor:weak Zset} holds under that weaker hypothesis.

  Similarly, the arguments in Subsections \ref{ss:vanishing} and \ref{ss:mainthm} will give the isomorphisms as in Theorem \ref{thm:maintheorem}, so long as $k\leq N$ and $(G,\mc{P})$ is type $F_{N+1}$ and relatively hyperbolic.
\end{remark}
\subsection{Vanishing of \v{C}ech cohomology of the compactification}\label{ss:vanishing}

\begin{lemma}\label{lem:HLC lemma}
Suppose $V\rightarrow U$ factors as 
\[
V=V_n\rightarrow V_{n-1}\rightarrow\cdots\rightarrow V_0\rightarrow U
\]
where each map induces the trivial homomorphism on $\pi_i$ for $i\le n$.
Then, the induced homomorphism $H_i(V;\Z)\rightarrow H_i(U;\Z)$ is trivial for $i\le n$.
\end{lemma}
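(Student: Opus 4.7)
My plan is to iteratively lift the composition $V\to U$ through the Whitehead tower of $U$, ultimately producing a factorization through an $n$-connected space. After reducing to CW complexes via CW approximation (both homotopy and singular homology being invariants of weak equivalence), let $U^{(k)}$ denote the $k$-connected cover in the Whitehead tower of $U$, so that $\pi_j(U^{(k)})=0$ for $j\le k$ while $U^{(k)}\to U$ induces an isomorphism on $\pi_j$ for $j>k$.

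The construction proceeds by induction on $k=0,1,\ldots,n$. The base step uses $\pi_1$-triviality of $f_0\co V_0\to U$ to lift to $V_0\to U^{(1)}=\tilde U$ via covering-space theory. At the inductive step, given a partial factorization of the tail of the chain that already lifts through $U^{(k)}$, I would use $\pi_1$-triviality of the next map $V_k\to V_{k-1}$ to lift it through the universal cover $\widetilde{V_{k-1}}$ and then combine this with the $\pi_{k+1}$-triviality of the composition down to $U$ to push one more stage up the tower, to $U^{(k+1)}\to U^{(k)}$. After $n$ such iterations, the composition $V=V_n\to U$ factors as $V\to U^{(n)}\to U$ with $U^{(n)}$ being $n$-connected; the Hurewicz theorem then yields $\tilde H_i(U^{(n)};\Z)=0$ for $i\le n$, so $\tilde H_i(V;\Z)\to \tilde H_i(U;\Z)$ is trivial in that range.

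The main obstacle will be verifying the existence of the lifts at each stage of the Whitehead tower. The obstruction to lifting a map $X\to U^{(k)}$ through the principal fibration $U^{(k+1)}\to U^{(k)}$ lives in $H^{k+1}(X;\pi_{k+1}(U))$, which by the universal coefficient theorem has both a $\mathrm{Hom}$ and an $\mathrm{Ext}$ component; $\pi_{k+1}$-triviality of $X\to U^{(k)}\to U$ controls only the $\mathrm{Hom}$ part. To also kill the $\mathrm{Ext}$ part I expect to need to replace the source of the lift by a sufficiently connected cover (so that the relevant lower $H_*$ group vanishes). This is where having all $n+1$ maps of the chain becomes essential: they provide enough successive $\pi_1$-trivialities to iteratively pass to higher-connected covers of the intermediate $V_j$'s, forcing each obstruction class to vanish and allowing the tower of lifts to be built all the way up to $U^{(n)}$.
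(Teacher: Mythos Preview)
Your overall strategy---factor $V\to U$ through an $n$-connected space and invoke Hurewicz---is exactly the paper's.  The paper's execution, however, is much more direct than what you describe.  Rather than fixing $U$ and climbing its Whitehead tower, the paper walks along the chain: having factored $V_n\to V_{n-k}$ through a $(k-1)$-connected space $W_{k-1}$, the composite $W_{k-1}\to V_{n-k}\to V_{n-k-1}$ is trivial on $\pi_k$ (the second arrow already is), and since the source is $(k-1)$-connected this map lifts to the $k$-connected cover $W_k$ of the relevant component of $V_{n-k-1}$.  After $n+1$ steps one has $V_n\to W_n\to U$ with $W_n$ $n$-connected.

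The point is that your $\op{Ext}$ obstacle never appears in this setup: at step $k$ the source $W_{k-1}$ is $(k-1)$-connected, so $H^j(W_{k-1};A)=0$ for $0<j\le k-1$ and $H^k(W_{k-1};A)\cong\Hom(\pi_k(W_{k-1}),A)$ by Hurewicz plus UCT; $\pi_k$-triviality alone then kills the one remaining obstruction.  In your scheme the source at step $k$ is $V_k$ (or its universal cover), which is not $k$-connected, and in fact your claim that ``$\pi_{k+1}$-triviality controls the $\Hom$ part'' already overstates matters: that $\Hom$ part is a map out of $H_{k+1}(V_k)$, not $\pi_{k+1}(V_k)$, and these need not agree.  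Your proposed fix---replace the source by a sufficiently connected cover---\emph{is} the paper's argument, not a patch on top of a different one; once you do it, the separate climb up the tower of $U$ is superfluous.  (Also, ``successive $\pi_1$-trivialities'' undersells what you actually need: it is the $\pi_k$-triviality of $V_{n-k}\to V_{n-k-1}$, one value of $k$ per map, that does the work.)
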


\begin{proof}
The map $V_n\rightarrow V_{n-1}$ factors through a connected space, which we will denote $W_0$.
The map $W_0\rightarrow V_{n-2}$ factors through a simply connected space $W_1$.
Proceeding inductively, we see that $V_n\rightarrow U$ factors through an $n$-connected space $W_n$, so the map is trivial on homology.
\end{proof}

The following is the key lemma:
\begin{lemma}\label{lem:vanish}
  For $k\leq N$, $\check{H}^k(\overline{\mc{X}(N)};A)\cong 0$ where the left hand side is reduced \v{C}ech cohomology.
\end{lemma}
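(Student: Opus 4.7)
The plan is to prove this by showing $\overline{\mc{X}(N)}$ is both $N$-connected and $HLC^N$, and then invoking Proposition \ref{prop: Cech Singular Iso} to convert the singular vanishing into \v{C}ech vanishing. The key inputs will be Proposition \ref{prop:conditioniii} (to run the Bestvina--Mess machinery) and the fact that $\mc{X}(N)$ was constructed to be $N$-connected.

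First I would check that all the hypotheses of Proposition \ref{prop:weak BM 2.1} apply to the pair $(X, F) = (\overline{\mc{X}(N)}, \partial(G,\mc{P}))$ with $n = N$. Compactness and metrizability of $X$ come from the Rips-complex compactification of Definition \ref{def:compactification of rips} together with Proposition \ref{prop: r extends to compactification}; $F$ has empty interior because $\mc{X}(N)$ is dense; $X \setminus F = \mc{X}(N)$ is a locally finite simplicial complex, hence an ANR; and Proposition \ref{prop:conditioniii} is exactly condition (3) for $i \leq N$. With these in place, both conclusions of Proposition \ref{prop:weak BM 2.1} are available.

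Next I would establish $HLC^N$. At any point of $\mc{X}(N)$ this is automatic because simplicial complexes are locally contractible. At $z \in \partial(G,\mc{P})$, given a neighborhood $U$, I would apply Proposition \ref{prop:weak BM 2.1}(2) iteratively $N+1$ times to produce a chain of nested neighborhoods $V = V_N \subseteq V_{N-1} \subseteq \cdots \subseteq V_0 \subseteq U$ such that each inclusion $V_k \hookrightarrow V_{k-1}$ is trivial on $\pi_i$ for all $i \leq N$. Lemma \ref{lem:HLC lemma} then forces $H_i(V;\Z) \to H_i(U;\Z)$ to be the zero map for $i \leq N$, giving $HLC^N$ at $z$. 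Proposition \ref{prop: Cech Singular Iso} then yields $\check{H}^k(\overline{\mc{X}(N)};A) \cong H^k(\overline{\mc{X}(N)};A)$ for $k \leq N$, reducing the problem to singular cohomology.

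Finally I would argue that $\overline{\mc{X}(N)}$ is $N$-connected. The open space $\mc{X}(N)$ is itself $N$-connected: $E_G$ and each $E_P$ are the $(N+1)$-skeleta of universal covers of classifying spaces (hence $N$-connected), and $\mc{X}(N)$ is built by gluing $E_G$ to warped product horoballs $[0,\infty) \times E_P$ along the lifts of $\iota_P$, which a van Kampen / Mayer--Vietoris argument shows preserves $N$-connectedness. Given $f \co S^i \to \overline{\mc{X}(N)}$ with $i \leq N$, part (1) of Proposition \ref{prop:weak BM 2.1} produces a homotopy pushing $f$ off of $\partial(G,\mc{P})$ into $\mc{X}(N)$; there it bounds a disk by $N$-connectedness, and one more application of (1) to that bounding disk provides a null-homotopy that avoids the boundary entirely. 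Thus $H^k(\overline{\mc{X}(N)};A) = 0$ for $k \leq N$ (reduced), which combined with the previous paragraph gives the lemma. The main obstacle is the $HLC^N$ verification at boundary points, since Proposition \ref{prop:conditioniii} as stated only controls spheres that miss $\partial(G,\mc{P})$; the iterated neighborhood chain together with Lemma \ref{lem:HLC lemma} is precisely what bridges that gap.
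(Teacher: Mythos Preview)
Your proposal is correct and follows essentially the same route as the paper: verify $HLC^N$ at boundary points via iterated application of Proposition~\ref{prop:weak BM 2.1}(2) and Lemma~\ref{lem:HLC lemma}, invoke Proposition~\ref{prop: Cech Singular Iso}, then show $N$--connectedness of $\overline{\mc{X}(N)}$ by pushing spheres off the boundary with Proposition~\ref{prop:weak BM 2.1}(1) and using that $\mc{X}(N)$ is $N$--connected. One small redundancy: once $f'$ bounds a disk in $\mc{X}(N)$ you are already done, so the ``one more application of (1) to that bounding disk'' is unnecessary (and indeed would not apply when $i=N$, since the disk has dimension $N+1$).
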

\begin{proof}
	We claim that $\overline{\mc{X}(N)}$ is $HLC^N$.
For this, we only need to consider the points on $\partial(G,\mc{P})$. Let $z\in\partial(G,\mc{P})$ and let $U$ be an open neighborhood of $z$ in $\overline{\mc{X}(N)}$.  We need an open neighborhood $V\subseteq U$ of $z$ such that $H_i(V;\Z)\rightarrow H_i(U;\Z)$ is trivial for $i=0,...,N$. By Propositions \ref{prop:conditioniii} and \ref{prop:weak BM 2.1} we see that there is a neighborhood $V_0\subseteq U$ of $z$ such that maps $S^i\rightarrow V_0$ are nullhomotopic in $U$ for $i=0,...,N$. Inductively, we can find $V_j\subseteq V_{j-1}$ such that maps $S^i\rightarrow V_j$ are nullhomotopic in $V_{j-1}$. Applying Lemma \ref{lem:HLC lemma}, we see that $\overline{\mc{X}(N)}$ is $HLC^N$.

By \ref{prop: Cech Singular Iso}, there is the isomorphism $\check{H}^i(\overline{\mc{X}(N)};A)\cong H^i(\overline{\mc{X}(N)};A)$ between \v{C}ech cohomology and singular cohomology for $i\le N$. Now, we show that $\overline{\mc{X}(N)}$ is $N$-connected. Consider a map $f:S^i\rightarrow\overline{\mc{X}(N)}$.  By Proposition \ref{prop:weak BM 2.1}, we may assume that $f(S^i)\cap\partial(G,\mc{P})=\emptyset$. Then $f$ is nullhomotopic because $\mc{X}(N)$ is $N$-connected. So $H_i(\overline{\mc{X}(N)};\Z)\cong0$ for $i\le N$ and, by the universal coefficients theorem, $H^i(\overline{\mc{X}(N)};A)\cong0$ for $0<i\le N$.
Therefore, $\Check{H}^i(\overline{\mc{X}(N)};A)\cong0$ for $0<i\le N$. The $i=0$ case is trivial since we are using reduced \v{C}ech cohomology.
\end{proof}

We can now relate the compactly supported cohomology of $\mc{X}(N)$ with the \v{C}ech cohomology of the Bowditch boundary.
\begin{prop}\label{prop:boundaryiso}
  For $k\leq N$ there is an isomorphism of $AG$-modules
\[ H_c^k(\mc{X}(N);A) \to \check{H}^{k-1}(\partial(G,\mc{P});A).\]
\end{prop}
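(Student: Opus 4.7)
The plan is to run the long exact sequence in \v{C}ech cohomology of the pair $(\overline{\mc{X}(N)}, \partial(G,\mc{P}))$ and use Lemma \ref{lem:vanish} to identify two of the terms with zero. First, I would establish a natural identification
\[ \check{H}^k(\overline{\mc{X}(N)}, \partial(G,\mc{P}); A) \cong H_c^k(\mc{X}(N); A) \]
for all $k$. Since $\overline{\mc{X}(N)}$ is a compact Hausdorff space (the Bowditch boundary is compact metrizable and $\mc{X}(N)$ is locally compact and finite-dimensional), $\partial(G,\mc{P})$ is a closed subset, and $\mc{X}(N) = \overline{\mc{X}(N)} \setminus \partial(G,\mc{P})$ is an open paracompact subspace, this identification follows from the standard interpretation of \v{C}ech cohomology of a compact pair as compactly supported cohomology of the complement (see, e.g., \cite[Chapter 6]{Spanier}). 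Moreover, because $\mc{X}(N)$ is a locally finite simplicial complex, this compactly supported \v{C}ech cohomology agrees with the compactly supported simplicial cohomology used in Proposition \ref{prop:topiso}.

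Next I would write down the long exact sequence of the pair in reduced \v{C}ech cohomology:
\[ \check{H}^{k-1}(\overline{\mc{X}(N)}; A) \to \check{H}^{k-1}(\partial(G,\mc{P}); A) \to \check{H}^k(\overline{\mc{X}(N)}, \partial(G,\mc{P}); A) \to \check{H}^k(\overline{\mc{X}(N)}; A). \]
By Lemma \ref{lem:vanish}, for $k \leq N$ both flanking terms $\check{H}^{k-1}(\overline{\mc{X}(N)};A)$ and $\check{H}^k(\overline{\mc{X}(N)};A)$ vanish (note $k-1 \leq N-1 \leq N$). Thus the connecting map is an isomorphism, which combined with the identification above gives the isomorphism
\[ \check{H}^{k-1}(\partial(G,\mc{P});A) \xrightarrow{\cong} H_c^k(\mc{X}(N);A). \]
Inverting this yields the statement of the proposition.

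Finally I would verify $AG$-equivariance. The group $G$ acts cocompactly by simplicial automorphisms on $\mc{X}(N)$ and this action extends continuously to $\overline{\mc{X}(N)}$ with $\partial(G,\mc{P})$ an invariant closed subset. Every step above — the identification of $\check{H}^*$ of the pair with $H_c^*$ of the complement, the long exact sequence of the pair, and the connecting homomorphism — is natural with respect to this action, so the isomorphism is one of $AG$-modules.

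The main technical obstacle is the first step: rigorously identifying $\check{H}^k$ of the compact pair with compactly supported (simplicial) cohomology of the open complement, in a manner which is visibly $G$-equivariant and which matches the convention used in Proposition \ref{prop:topiso}. Once this is in place, the rest of the argument is a formal consequence of Lemma \ref{lem:vanish} and the long exact sequence.
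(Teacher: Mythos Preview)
Your proposal is correct and follows essentially the same approach as the paper, which invokes the sheaf-cohomology long exact sequence of the pair $(\overline{\mc{X}(N)},\partial(G,\mc{P}))$ from \cite[II.10.3]{Bredon} (identified with \v{C}ech cohomology on these spaces) and then applies Lemma~\ref{lem:vanish}. One minor correction: $G$ does not act \emph{cocompactly} on $\mc{X}(N)$ (the horoballs have noncompact quotient), but this is irrelevant to your equivariance argument, which only requires that $G$ act by simplicial automorphisms extending to $\overline{\mc{X}(N)}$.
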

\begin{proof}
  Since $\overline{\mc{X}(N)}=\mc{X}(N)\cup \partial(G,\mc{P})$ is compact Hausdorff, and $\partial(G,\mc{P})$ is closed in $\overline{\mc{X}(N)}$, \cite[II.10.3]{Bredon} gives the following long exact sequence of sheaf cohomology groups.
\[
\cdots\rightarrow H^{k-1}(\overline{\mc{X}(N)};A)\rightarrow H^{k-1}(\partial(G,\mc{P});A)\rightarrow H^k_c(\mc{X}(N);A)\rightarrow H^k(\overline{\mc{X}(N)};A)\rightarrow\cdots
\]
The space $\mc{X}(N)$ is a CW-complex, so its compactly supported sheaf cohomology is isomorphic to its compactly supported singular cohomology (see Appendix, Proposition \ref{prop: S_0 acyclic}). Additionally, sheaf cohomology is isomorphic to \v{C}ech cohomology for the spaces above.
The result follows from Lemma \ref{lem:vanish}.
\end{proof}
\subsection{Proof of Theorem \ref{thm:maintheorem}}\label{ss:mainthm}
Recall the statement.
\maintheorem*

\begin{proof}  We fix a $k$ and find an isomorphism as in \eqref{mainisomorphism}.  Fix some $N$ so that $k\leq N$ and consider the $N$--connected space $\mc{X}(N)$ defined in Definition \ref{def:curlyX}.  Proposition \ref{prop:topiso} gives the isomorphism $H^k(G,\mc{P};A G) \to H_c^k(\mc{X}(N);A)$.

  Proposition \ref{prop:boundaryiso} then gives the isomorphism
  $ H_c^k(\mc{X}(N);A)\to \check{H}^{k-1}(\partial(G,\mc{P});A)$.
\end{proof}

  We now prove a corollary alluded to in the introduction.
\begin{cor} \label{cor:highdcohomisperipheral}
If $(G,\mathcal{P})$ is an $F_\infty$ relatively hyperbolic group pair, then there is an $N>0$ such that, for all $k\ge N$,
\[
H^k(G;\Z G)\cong H^k(\mathcal{P};\Z G)\cong\oplus_{P\in\mathcal{P}}H^k(P;\Z P)\otimes_{\Z P}\Z G
\]
\end{cor}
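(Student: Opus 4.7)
The plan is to combine Theorem \ref{thm:maintheorem} with the finite dimensionality of the Bowditch boundary (Lemma \ref{lem:dahmanilemma}) to force the relative cohomology $H^k(G,\mc{P};\Z G)$ to vanish in high degrees, then push this vanishing through the long exact sequence of Proposition \ref{prop:BEles} to identify $H^k(G;\Z G)$ with $H^k(\mc{P};\Z G)$. A Shapiro-type calculation will then identify the latter with the claimed sum.

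First I would set $d = \dim \partial(G,\mc{P})$, which is finite by Lemma \ref{lem:dahmanilemma}. Since $\partial(G,\mc{P})$ is a compact metrizable space, its reduced \v Cech cohomology vanishes above its covering dimension, so $\check{H}^{k-1}(\partial(G,\mc{P});\Z) = 0$ whenever $k-1 > d$. Applying Theorem \ref{thm:maintheorem} with $A = \Z$, this gives $H^k(G,\mc{P};\Z G) = 0$ for all $k > d+1$. I would then set $N = d+2$; for $k \ge N$ both $H^k(G,\mc{P};\Z G)$ and $H^{k+1}(G,\mc{P};\Z G)$ vanish, so the relevant segment of the long exact sequence in Proposition \ref{prop:BEles} collapses to the isomorphism $H^k(G;\Z G) \xrightarrow{\sim} H^k(\mc{P};\Z G)$.

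For the second isomorphism, finiteness of $\mc{P}$ turns the product into a direct sum: $H^k(\mc{P};\Z G) = \bigoplus_{P \in \mc{P}} H^k(P;\Z G)$. For each $P \in \mc{P}$, restricting $\Z G$ along $\Z P \hookrightarrow \Z G$ gives a decomposition as a free left $\Z P$-module $\Z G \cong \bigoplus_{[g] \in P \backslash G} \Z P \cdot g$. Because $P$ is type $F_\infty$, I can take a free resolution $F_\bullet \to \Z$ of finite type over $\Z P$, and then $\Hom_{\Z P}(F_i,-)$ commutes with arbitrary direct sums. This yields
\[ H^k(P;\Z G) \;\cong\; \bigoplus_{[g] \in P \backslash G} H^k(P;\Z P) \;\cong\; H^k(P;\Z P) \otimes_{\Z P} \Z G, \]
where in the last step I use the right $\Z P$-module structure on $H^k(P;\Z P)$ and tensor with $\Z G$ viewed as a left $\Z P$-module. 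Summing over $P$ gives the displayed formula.

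The only nontrivial input is the first paragraph: everything after the vanishing of $H^k(G,\mc{P};\Z G)$ is formal homological algebra. The main step, and the place where all the work sits, is invoking Theorem \ref{thm:maintheorem} together with Lemma \ref{lem:dahmanilemma} to obtain the vanishing; once that is in hand, the rest is just the long exact sequence and the observation that $P$ being $F_\infty$ lets $\Hom_{\Z P}$ pass through the direct sum decomposition of $\Z G$.
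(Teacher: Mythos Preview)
Your proof is correct and follows essentially the same approach as the paper: both invoke Lemma~\ref{lem:dahmanilemma} and Theorem~\ref{thm:maintheorem} to force $H^k(G,\mc{P};\Z G)$ to vanish for large $k$, then use the long exact sequence of the pair for the first isomorphism and a Shapiro-type argument (the paper simply cites \cite[Exercise VIII.5.4a]{Bro87}) for the second. Your choice $N=d+2$ is arguably more careful than the paper's $N=d+1$, and your explicit use of type $F_\infty$ to pass $\Hom_{\Z P}$ through the direct sum is exactly the content of the cited exercise.
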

\begin{proof}
  By Lemma \ref{lem:dahmanilemma}, the dimension of $\partial(G,\mc{P})$ is finite.  Let $N=\dim\partial(G,\mc{P})+1$, so that $H^k(G,\mc{P};\Z G)\cong \check{H}^{k-1}(\partial(G,\mc{P});\Z)$ vanishes for $k\geq N$.
The first isomorphism then follows from the long exact cohomology sequence of a group pair.
The second isomorphism follows from  \cite[Exercise VIII.5.4a]{Bro87}.
\end{proof}

\section{Boundaries of \PD{n} pairs.}\label{sec:pdpair}
A \PD{n} group pair $(G,\mc{P})$ is a pair which is FP and for which \[H^k(G,\mc{P};\Z G)\cong
\begin{cases}
  \tilde{\Z} & k=n\\
  0 & k\neq n
\end{cases}\]
where $\tilde{\Z}$ is the abelian group $\Z$ with a possibly nontrivial action of $G$.

A thorough discussion of \PD{n} pairs can be found in \cite{BE78}.  The following two results are easily deduced from Theorems 2.1, 4.2, and 6.2 of that paper.

\begin{theorem}\label{thm: pd n isomorphism}
For a \PD{n} pair $(G,\mc{P})$ and a $\Z G$-module $M$, there is a commutative ladder between long exact sequences where the vertical arrows are isomorphisms:
\[
\begin{tikzcd}
\cdots\arrow{r}&H^k(G;M)\arrow{r}\arrow{d}&H^k(\mc{P};M)\arrow{r}\arrow{d}&H^{k+1}(G,\mc{P};M)\arrow{r}\arrow{d}&\cdots\\
\cdots\arrow{r}&H_{n-k}(G,\mc{P};\tilde{\Z}\otimes M)\arrow{r}&H_{n-k-1}(\mc{P};\tilde{\Z}\otimes M)\arrow{r}&H_{n-k-1}(G;\tilde{\Z}\otimes M)\arrow{r}&\cdots
\end{tikzcd}
\]
\end{theorem}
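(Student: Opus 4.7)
The plan is to realize the three vertical arrows as Poincar\'e--Lefschetz duality isomorphisms given by cap product with a compatible family of fundamental classes, and then to verify that these cap products intertwine the connecting homomorphisms of the two long exact sequences.

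The first step is to identify the fundamental classes. Since $H^n(G,\mc{P};\Z G)\cong\tilde{\Z}$ and the pair is $FP$, Bieri--Eckmann's Theorem 4.2 furnishes a fundamental class $[G,\mc{P}]\in H_n(G,\mc{P};\tilde{\Z})$. Its image under the connecting map $\partial\co H_n(G,\mc{P};\tilde{\Z})\to H_{n-1}(\mc{P};\tilde{\Z})$ is a family of classes $[P]\in H_{n-1}(P;\tilde{\Z}|_P)$, and in particular each $P\in\mc{P}$ is itself a \PD{n-1} group. The three vertical arrows are then defined to be cap product with $[G,\mc{P}]$, with $\sum_P[P]$, and with $[G,\mc{P}]$, respectively.

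That each vertical arrow is in fact an isomorphism is the content of Bieri--Eckmann's Theorem 6.2: the left and right maps realize the pair duality for the \PD{n} pair $(G,\mc{P})$, while the middle map is the sum of the absolute \PD{n-1} dualities for the individual peripherals, acting on coefficients $\tilde{\Z}|_P\otimes M$ which one then globally identifies with $\tilde{\Z}\otimes M$.

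The technical heart of the proof is commutativity of the ladder. Fix an Eilenberg--MacLane pair $(K,L)$ for $(G,\mc{P})$ and a relative cycle $\zeta\in C_n(\tilde{K},\tilde{L};\tilde{\Z})$ representing $[G,\mc{P}]$, so that $\partial\zeta\in C_{n-1}(\tilde{L};\tilde{\Z})$ represents $\sum_P[P]$. The Leibniz identity $\partial(\alpha\frown\zeta)=(\delta\alpha)\frown\zeta+(-1)^{|\alpha|}\alpha\frown\partial\zeta$ for cap product, combined with the fact that restriction and inclusion commute with cap product, converts the coboundary maps of the top row into the inclusion- and boundary-induced maps of the bottom row. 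The remaining work is careful sign bookkeeping, which is precisely the argument Bieri--Eckmann spell out in their Theorems 2.1 and 6.2, and it yields commutativity of the ladder up to sign, as stated.
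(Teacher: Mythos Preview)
Your proposal is correct and matches the paper's approach: the paper gives no self-contained proof of this theorem but simply states that it is ``easily deduced from Theorems 2.1, 4.2, and 6.2'' of Bieri--Eckmann \cite{BE78}, which are precisely the ingredients you invoke. In fact you supply more detail than the paper does, by sketching the cap-product and Leibniz-rule mechanism underlying those cited results.
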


\begin{prop}\label{prop: subgroups of pd n pair}
If $(G,\mc{P})$ is a \PD{n} pair and $P\in\mc{P}$, then $P$ is a \PD{n-1} group.
In particular,
\[
H^k(P;\Z P)\cong\begin{cases}\tilde{\Z}&k=n-1\\0&k\neq n-1\end{cases}
\]
\end{prop}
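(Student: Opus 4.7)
The plan is to apply Theorem~\ref{thm: pd n isomorphism} with $M=\Z G$, and exploit the fact that $\Z G$ restricts to a free $\Z P$-module to reduce computations involving peripheral cohomology with $\Z G$ coefficients to computations involving $H^k(P;\Z P)$.

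Concretely, since $\mc{P}$ is finite, naturality of the duality ladder (ultimately traceable to cap product with a fundamental class of the pair) implies that the vertical isomorphism at the $H^k(\mc{P};-)$ term respects the splitting $H^k(\mc{P};M)=\bigoplus_{P}H^k(P;M)$, giving, for each $P\in\mc{P}$,
\[
H^k(P;\Z G)\cong H_{n-k-1}(P;\tilde{\Z}\otimes\Z G).
\]
Now $\Z G$ is a free $\Z P$-module indexed by the right coset space $P\backslash G$, and $\tilde{\Z}\otimes\Z G\cong\Z G$ as $\Z G$-modules via the twist $z\otimes g\mapsto\chi(g)zg$, so the latter is also free over $\Z P$. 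Because $(G,\mc{P})$ is type FP, each $P$ is type FP, so both $H^*(P;-)$ and $H_*(P;-)$ commute with direct sums in their coefficient modules. Both sides of the displayed isomorphism therefore decompose as
\[
\bigoplus_{P\backslash G}H^k(P;\Z P)\;\cong\;\bigoplus_{P\backslash G}H_{n-k-1}(P;\Z P).
\]

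Since $\Z P$ is a free $\Z P$-module, the right-hand side vanishes for $n-k-1\neq 0$ and equals $\bigoplus_{P\backslash G}\Z$ when $k=n-1$. This forces $H^k(P;\Z P)=0$ for all $k\neq n-1$. For the remaining degree, I would track the right $\Z G$-action inherited from right multiplication on $\Z G$, which restricts to a $\Z P$-action on each summand of the displayed decomposition. Identifying the contribution of the identity coset --- whose stabilizer under right multiplication by $P$ is $P$ itself --- yields an isomorphism of $\Z P$-modules $H^{n-1}(P;\Z P)\cong\tilde{\Z}$. Combined with the hypothesis that $P$ is type FP, the Bieri--Eckmann characterization of PD groups then gives that $P$ is a \PD{n-1} group.

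The main obstacle is this last equivariance step. The vanishing statements and the rank count at $k=n-1$ follow fairly directly once the decomposition into cosets is in place, but extracting the correct $\Z P$-twist on the rank-one module $H^{n-1}(P;\Z P)$ from an isomorphism of infinite direct sums requires careful bookkeeping of the $\Z G$-module structure on both sides of the duality ladder and on the coset decomposition; a naive abelian-group level argument sees only the underlying $\Z$.
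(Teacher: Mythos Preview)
The paper does not actually prove this proposition; it simply records it as a consequence of Theorems 2.1, 4.2, and 6.2 of \cite{BE78}. So your attempt is a genuine proof sketch rather than a reproduction of anything in the paper.

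Your vanishing argument for $k\neq n-1$ is fine: the duality isomorphism with $M=\Z G$, the freeness of $\Z G$ over $\Z P$, and the type FP hypothesis do give $\bigoplus_{P\backslash G}H^k(P;\Z P)\cong\bigoplus_{P\backslash G}H_{n-k-1}(P;\Z P)$, and the right side vanishes for $n-k-1\neq 0$.

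The obstacle you flag at $k=n-1$ is real, and your proposed fix does not quite close it. Knowing only that the isomorphism
\[
\bigoplus_{P\backslash G}H^{n-1}(P;\Z P)\;\cong\;\bigoplus_{P\backslash G}\tilde{\Z}
\]
is right $\Z G$--equivariant is not enough to match up the identity-coset summands: an arbitrary $\Z G$--isomorphism between two modules induced from $P$ need not arise from a $\Z P$--isomorphism of the inducing modules, so ``looking at the coset fixed by $P$'' does not by itself pin down $H^{n-1}(P;\Z P)$. What you need---and what you gesture at but do not exploit---is that the duality map $H^k(P;M)\to H_{n-1-k}(P;\tilde{\Z}\otimes M)$ is cap product with a fixed class $\mu_P\in H_{n-1}(P;\tilde{\Z})$, hence a natural transformation of functors on \emph{$\Z P$--modules}, not merely $\Z G$--modules. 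Since $\Z P$ is a $\Z P$--direct summand of $\Z G$, naturality applied to the split inclusion $\Z P\hookrightarrow\Z G$ shows that $-\cap\mu_P$ is an isomorphism for $M=\Z P$, giving $H^{n-1}(P;\Z P)\cong H_0(P;\tilde{\Z}\otimes\Z P)\cong\tilde{\Z}$ directly. This retract-of-an-isomorphism step is exactly how Bieri--Eckmann organize the argument, and it bypasses the bookkeeping you were worried about.
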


The main result of this section is the following.
\pdpair*

  Since  $2$-dimensional homology manifolds are manifolds \cite[IX.5.6]{Wilder}, we recover the following result of Tshishiku and Walsh \cite{TshWa}.
\begin{cor}\label{cor:2sphere}
  Suppose $(G,\mathcal{P})$ is type $F$ and relatively hyperbolic.  
  The pair $(G,\mc{P})$ is \PD{3} if and only if $\partial(G,\mathcal{P})\cong S^2$.
\end{cor}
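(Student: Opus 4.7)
The plan is to invoke Theorem \ref{thm:pdpair} in the specialized case $n=3$ and then upgrade its conclusion from ``homology $2$-manifold that is an integral \v{C}ech cohomology $2$-sphere'' to a genuine topological $2$-sphere, using classical surface topology. The heavy lifting has been done in Theorem \ref{thm:pdpair}; only a short topological argument is needed on top.

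For the forward direction, assume $(G,\mathcal{P})$ is \PD{3}. By Theorem \ref{thm:pdpair}, $\partial(G,\mathcal{P})$ is a homology $2$-manifold and an integral \v{C}ech cohomology $2$-sphere. Wilder's theorem \cite[IX.5.6]{Wilder}, cited just before the statement, promotes any $2$-dimensional homology manifold to a topological $2$-manifold, so $\partial(G,\mathcal{P})$ is an honest surface. Since the Bowditch boundary is compact (it is the Gromov boundary of the proper hyperbolic cusped space on which $(G,\mathcal{P})$ acts) and has trivial reduced \v{C}ech $H^0$, it is a closed connected surface. A topological manifold is locally contractible, so its \v{C}ech and singular cohomologies agree; hence this surface has $H^1=0$ and $H^2 \cong \Z$. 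The classification of closed surfaces then forces $\partial(G,\mathcal{P}) \cong S^2$: every closed surface of positive genus has either nontrivial first cohomology (in the orientable case) or vanishing integral $H^2$ (in the nonorientable case).

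For the converse, if $\partial(G,\mathcal{P}) \cong S^2$ then it is trivially a homology $2$-manifold and an integral \v{C}ech cohomology $2$-sphere, so Theorem \ref{thm:pdpair} immediately yields that $(G,\mathcal{P})$ is \PD{3}.

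The only step of substance beyond quoting Theorem \ref{thm:pdpair} is the surface-classification argument, and the main potential pitfall is simply being careful that the Bowditch boundary is compact, connected, and without boundary under the hypothesis—each of which follows cleanly: compactness from the construction of $\partial(G,\mathcal{P})$, connectedness from the vanishing of reduced $\check{H}^0$, and the absence of boundary from the homology-manifold property delivered by Theorem \ref{thm:pdpair}. No genuine obstacle arises.
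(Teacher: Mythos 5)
Your proof takes essentially the same route the paper does: it invokes Theorem \ref{thm:pdpair} with $n=3$, uses Wilder's theorem \cite[IX.5.6]{Wilder} (which the paper cites immediately before the corollary) to upgrade the homology $2$-manifold to a genuine surface, and then argues via the classification of closed surfaces; the paper simply states this without spelling out the final step. One small slip: a closed nonorientable surface does not have vanishing integral $H^2$ — rather $H^2(N_k;\Z)\cong\Z/2$ — but this is still $\neq\Z$, so the conclusion that such a surface is not an integral cohomology $2$-sphere (and hence $\partial(G,\mathcal{P})\cong S^2$) stands.
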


\begin{remark}
In \cite{B96}, Bestvina considers groups which are \PD{n} over arbitrary principal ideal domains and he uses a ``cell-trading argument'' in his proof of \cite[Proposition 2.7]{B96}.
Because we do not have a cocompact action, we cannot mimic this argument.
So, we appeal to the Eilenberg-Ganea Theorem which restricts our statements to integer coefficients.
\end{remark}

For the remainder of this section we assume that $(G,\mc{P})$ is a type $F$ relatively hyperbolic pair, and $\mc{X}=\mc{X}(N)$ where $N$ is large enough so that $\mc{X}(N)$ is contractible. Additionally, we will suppress integer coefficients.  We note the following corollary of Theorem \ref{thm:maintheorem} and \cite[Theorem 6.2]{BE78}.
\begin{cor}
  Suppose $(G,\mc{P})$ is type $F$ and relatively hyperbolic.  Then $(G,\mc{P})$ is \PD{n} if and only if $\partial(G,\mc{P})$ is a \v{C}ech cohomology $(n-1)$--sphere.
\end{cor}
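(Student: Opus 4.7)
The plan is to obtain this as a direct corollary of Theorem \ref{thm:maintheorem} combined with the Bieri--Eckmann characterization of \PD{n} pairs from \cite[Theorem 6.2]{BE78}, exploiting that type $F$ already implies FP.

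First I would apply Theorem \ref{thm:maintheorem} with coefficient ring $A=\Z$, which gives, for every $k$, an isomorphism of $\Z G$--modules
\[
H^k(G,\mc{P};\Z G) \;\cong\; \check{H}^{k-1}(\partial(G,\mc{P});\Z).
\]
The hypothesis that $\partial(G,\mc{P})$ is a \v{C}ech cohomology $(n-1)$--sphere is, by definition, the statement that the right-hand side is $\Z$ when $k=n$ and is $0$ otherwise. Via the displayed isomorphism, this is equivalent to the analogous vanishing/non-vanishing pattern for $H^k(G,\mc{P};\Z G)$.

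Next I would invoke \cite[Theorem 6.2]{BE78}, which characterizes \PD{n} pairs among FP pairs precisely by the condition that $H^k(G,\mc{P};\Z G)$ vanishes for $k\ne n$ and is infinite cyclic for $k=n$. Since $(G,\mc{P})$ is assumed type $F$, it is in particular FP, so that hypothesis is automatic. The $\Z G$--module structure on the top cohomology is then forced to be of the form $\tilde\Z$ because $\Aut(\Z)=\{\pm 1\}$, so any $\Z G$--module whose underlying abelian group is $\Z$ is $\Z$ with some orientation character. Putting these equivalences together yields the corollary.

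There is essentially no obstacle here beyond matching up the two statements: the content has been done in Theorem \ref{thm:maintheorem}, and the only point worth flagging is that the isomorphism produced there is $G$--equivariant (the $G$--action on $\check H^{n-1}(\partial(G,\mc{P});\Z)$ coming from the action on the Bowditch boundary), so no additional work is needed to identify the top cohomology with $\tilde\Z$ as a $\Z G$--module.
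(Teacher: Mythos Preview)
Your proposal is correct and is exactly the approach the paper takes: the corollary is stated in the paper as an immediate consequence of Theorem~\ref{thm:maintheorem} together with \cite[Theorem~6.2]{BE78}, with type $F$ supplying the FP hypothesis. Your remark that any $\Z G$--module structure on $\Z$ is automatically of the form $\tilde\Z$ makes explicit a point the paper leaves implicit.
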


The remainder of this section is therefore devoted to showing that if $(G,\mc{P})$ is \PD{n}, then $\partial(G,\mc{P})$ is a homology $(n-1)$--manifold.  In outline, we follow the proof of \cite[Theorem 2.8]{B96}.  The chief difference is that the cellular chain complex and compactly supported cellular cochain complex of $\mc{X}$ are not finitely generated as $\Z G$--modules.  We are nonetheless be able to show they are \emph{regular} in the sense of Definition \ref{def:regular} (cf. \cite[Definition 2.5]{B96}).  In the proof of Theorem \ref{thm:homologymanifold}, we are then able to argue as Bestvina does that the boundary is a homology manifold.

The case of \PD{2} pairs is well understood by work of Eckmann-M\"uller (covering the case that $\mc{P}\neq\emptyset$) and Eckmann-Linnell (covering the absolute case).
\begin{theorem}\label{thm:pd2} \cite[4.3]{EM80}\cite[Theorem 2]{EL83}
  If $(G,\mc{P})$ is \PD{2}, then $G=\pi_1\Sigma$ for some compact surface.  If $\mc{P}$ is empty, $\Sigma$ is closed, and otherwise the elements of $\mc{P}$ are the fundamental groups of the boundary components of $\Sigma$.
\end{theorem}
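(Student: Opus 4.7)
The plan is to reduce the relative case to the absolute classification---every \PD{2} group is a closed surface group, due to Eckmann--Linnell---via a doubling construction. When $\mc{P}=\emptyset$, one just invokes the absolute result directly.

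Assume $\mc{P}\neq\emptyset$. First, Proposition \ref{prop: subgroups of pd n pair} with $n=2$ gives that each $P\in\mc{P}$ is \PD{1}, hence $P\cong\Z$. I would then form the double $\widehat{G}=G*_{\mc{P}}G$, i.e.\ the fundamental group of the graph of groups with two vertex groups (both equal to $G$) and one edge (with edge group $P$) for each $P\in\mc{P}$. A Mayer--Vietoris computation using the long exact sequence of the pair $(G,\mc{P})$ from Proposition \ref{prop:BEles}, together with Shapiro's lemma and the fact that each $P$ is \PD{1}, shows $H^k(\widehat{G};\Z\widehat{G})=0$ for $k\neq 2$ and is of rank one in degree $2$. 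Since $(G,\mc{P})$ is FP, so is $\widehat{G}$, and we conclude $\widehat{G}$ is \PD{2}---this is the ``doubling along the boundary'' principle of Bieri--Eckmann.

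Invoking Eckmann--Linnell, $\widehat{G}\cong\pi_1(\Sigma')$ for some closed surface $\Sigma'$. The natural $\Z/2$-action on $\widehat{G}$ swapping the two vertex groups is a finite-order outer automorphism which, by Nielsen realization, is induced by an involution $\tau$ on $\Sigma'$. The $\tau$-invariant conjugacy classes of nontrivial maximal cyclic subgroups are exactly those of the edge groups in the amalgam, and a standard analysis of involutions on surfaces shows the fixed locus of $\tau$ is a disjoint union of essential simple closed curves, one for each $P\in\mc{P}$. The quotient $\Sigma=\Sigma'/\tau$ is then a compact surface with boundary components bijecting to $\mc{P}$, and $\pi_1(\Sigma)=G$ with peripheral structure exactly $\mc{P}$. (Alternatively, instead of Nielsen realization one can appeal to Stallings's theorem that $\Z$-splittings of surface groups are geometric to get the system of curves directly from the amalgam structure.)

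The main obstacle is the absolute Eckmann--Linnell input itself, which is a deep classification result. Secondary care is needed to handle low-Euler-characteristic corner cases (where $\Sigma'$ could be a sphere or torus, forcing $G$ and $\mc{P}$ to be correspondingly constrained), and to verify that $\widehat{G}$ really is \PD{2} as a $\Z\widehat{G}$-module---not just that its cohomology has the right abelian group structure.
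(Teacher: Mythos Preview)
The paper does not prove this theorem; it is stated purely as a citation to Eckmann--M\"uller \cite{EM80} (for $\mc{P}\neq\emptyset$) and Eckmann--Linnell \cite{EL83} (for $\mc{P}=\emptyset$), and is used only as a black box to handle the low-dimensional case in the proof of Theorem~\ref{thm:pdpair}.

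Your argument is a legitimate alternative derivation, but note that it inverts the historical logic: Eckmann--M\"uller established the relative case \emph{before} Eckmann--Linnell completed the absolute case, and in fact the latter uses the former as an ingredient (via the case of infinite abelianization). So you are not reproducing either cited proof, but rather giving a post-hoc deduction of \cite{EM80} from \cite{EL83}. The doubling step is exactly Bieri--Eckmann's result that the double of a \PD{n} pair is a \PD{n} group, so you need not redo the Mayer--Vietoris computation by hand. Of your two routes from the amalgam to the surface, the Stallings one (geometric realization of $\Z$--splittings of surface groups) is cleaner and avoids the issues you would otherwise face with Nielsen realization: you would need to check that the swap involution is orientation-reversing on $\Sigma'$ (so that its fixed set is a $1$--manifold rather than isolated points or empty), and then still argue that the fixed circles correspond bijectively to the edge groups. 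The Stallings route gives the curve system directly from the graph-of-groups structure, and either complementary piece is then a compact surface with $\pi_1=G$ and the correct peripheral structure, which is all you need.
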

  If such a pair is relatively hyperbolic, its Bowditch boundary is $S^1$.  We can therefore make the following assumption:
\begin{assumption}
 $  n\geq 3 $.
\end{assumption}

\begin{prop}
  If $(G,\mc{P})$ is a \PD{n} pair, we can assume the complex $\mc{X}$ is $n$--dimensional.
\end{prop}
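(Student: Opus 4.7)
The plan is to choose, from the outset, small classifying spaces $B_G$ and $B_P$ of dimensions $n$ and $n-1$ respectively, and feed these into Definition \ref{def:curlyX}. Since horoballs in $\mathrm{Cyl}$ have the form $[0,\infty)\times B_P$, the mapping cylinder (and hence its universal cover $\mc{X}$) will have dimension $\max\{\dim B_G,\,1+\dim B_P\}=n$. Taking $N\geq n-1$ ensures that the $(N+1)$--skeleta appearing in Definition \ref{def:curlyX} coincide with the classifying spaces themselves, so the whole construction goes through without modification.

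The first step is to bound the cohomological dimensions. Because $(G,\mc{P})$ is \PD{n} of type $F$, Proposition \ref{prop:kap} gives $\op{cd}(G,\mc{P})=n$, and Proposition \ref{prop: subgroups of pd n pair} gives $\op{cd}(P)\leq n-1$ for each $P\in\mc{P}$. Feeding these into the long exact sequence of the pair (Proposition \ref{prop:BEles}) and using $H^k(G,\mc{P};\Z G)=0$ for $k>n$ yields $\op{cd}(G)\leq n$ as well.

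The second step is to realize these bounds geometrically. Since $n\geq 3$, the Eilenberg--Ganea theorem furnishes a finite $n$--dimensional $K(G,1)$. For each $P\in\mc{P}$, I would split into cases: if $n\geq 4$, then $\op{cd}(P)\leq n-1\geq 3$ and Eilenberg--Ganea again provides a finite $(n-1)$--dimensional $K(P,1)$; if $n=3$, then $P$ is a \PD{2} group, and invoking Theorem \ref{thm:pd2} applied to the pair $(P,\emptyset)$ identifies $P$ with the fundamental group of a closed surface, which serves as a $2$--dimensional classifying space. After a subdivision to arrange the simplicial hypothesis and the inclusion hypothesis of Definition \ref{def:curlyX}, plugging these choices in yields an $\mc{X}$ of dimension $n$. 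The main subtlety is precisely the $n=3$ case, where Eilenberg--Ganea no longer applies to the peripheral groups and one must instead invoke the classification of \PD{2} groups to obtain the required $(n-1)$--dimensional classifying space.
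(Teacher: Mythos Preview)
Your proposal is correct and follows essentially the same approach as the paper: bound the cohomological dimensions of $G$ and the $P$'s, then apply Eilenberg--Ganea (with the \PD{2} classification handling the peripheral groups when $n=3$). The only difference is that the paper cites \cite{BE78} directly to get that $G$ is an $(n-1)$--dimensional duality group, whereas you extract the slightly weaker bound $\op{cd}(G)\leq n$ from the long exact sequence with $\Z G$ coefficients (which suffices since $G$ is type $FP$); either bound yields $\dim B_G\leq n$ once $n\geq 3$.
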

\begin{proof}
  As explained in \cite{BE78}, both $G$ and each $P\in \mc{P}$ are $(n-1)$--dimensional duality groups.  Each $P\in \mc{P}$ is moreover a \PD{n-1} group.  In particular $G$ has cohomological dimension $n$, and by the Eilenberg-Ganea Theorem admits a classifying space (which we can assume is simplicial) $B_G$ with $\dim(B_G) =\max\{3,n-1\}$.  Thus the cocompact part of $\mc{X}$ is at most $n$--dimensional.

  We claim that each $P\in \mc{P}$ has a classifying space of dimension $(n-1)$.  If $n\geq 4$, this follows from Eilenberg-Ganea.  If $n = 3$, then it follows from Theorem \ref{thm:pd2}.  It follows that the horoballs of $\mc{X}$ can be taken to be $n$--dimensional as well.
\end{proof}

We are therefore justified in making the following:
\begin{assumption}\label{assumption:dim}
  $\mc{X}$ is $n$--dimensional.
\end{assumption}

We note the following corollary (this also follows from Theorem \ref{topdimthm}).
\begin{cor}\label{cor:top dim}
  The topological dimension of $\partial(G,\mc{P})$ is $n-1$.
\end{cor}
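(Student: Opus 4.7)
The plan is to sandwich $\dim\partial(G,\mc{P})$ between $n-1$ and $n-1$ using the \v{C}ech-cohomological information already in hand. The corollary stated immediately before tells us that, under the PD($n$) assumption, $\partial(G,\mc{P})$ is an integral \v{C}ech cohomology $(n-1)$-sphere, so $\check{H}^{n-1}(\partial(G,\mc{P});\Z)\cong\Z$ and $\check{H}^k(\partial(G,\mc{P});\Z)=0$ for $k\geq n$. Lemma \ref{lem:dahmanilemma} together with the standard fact that $\partial(G,\mc{P})$ is compact metrizable puts us in the setting of a finite-dimensional compact metric space, which is where the covering dimension interacts most cleanly with \v{C}ech cohomological dimension.

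For the lower bound I would invoke the classical fact that for any compact Hausdorff space $X$, nonvanishing of $\check{H}^m(X;\Z)$ forces $\dim X\geq m$; applied to $m=n-1$ this gives $\dim\partial(G,\mc{P})\geq n-1$. For the upper bound I would first upgrade the integer-coefficient vanishing in degrees $\geq n$ to arbitrary-coefficient vanishing via the universal coefficient theorem for \v{C}ech cohomology of compacta (using vanishing in degrees $n$ and $n+1$ together), and then invoke the Alexandroff--Pontryagin theorem for finite-dimensional compact metric spaces, which asserts $\dim X=\sup\{k:\check{H}^k(X;A)\neq 0 \text{ for some abelian group } A\}$. This gives $\dim\partial(G,\mc{P})\leq n-1$, and combining with the lower bound gives the claimed equality.

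The only real obstacle is properly sourcing the dimension-theoretic background (Alexandroff--Pontryagin, universal coefficients for \v{C}ech cohomology of compacta, as in Hurewicz--Wallman or Nagami), since the geometric content of the proof collapses to a one-line consequence of the cohomology sphere property. The same conclusion is obtainable from the more general Theorem \ref{topdimthm}, which computes $\dim\partial(G,\mc{P})$ from relative group cohomology under the hypothesis $\op{cd}(G)<\op{cd}(G,\mc{P})$, a hypothesis automatically satisfied by PD($n$) pairs since each $P\in\mc{P}$ is PD($n-1$) and hence $\op{cd}(G)=n-1<n=\op{cd}(G,\mc{P})$.
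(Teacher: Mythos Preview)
Your lower bound is fine and matches the paper's: nonvanishing of $\check{H}^{n-1}(\partial(G,\mc{P});\Z)$ forces $\dim\partial(G,\mc{P})\geq n-1$.

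The upper bound argument, however, has a genuine gap. The statement you attribute to Alexandroff--Pontryagin, namely that for a finite-dimensional compact metric space $X$ one has
\[
\dim X=\sup\{k:\check{H}^k(X;A)\neq 0\text{ for some abelian group }A\},
\]
is false as stated. The closed $n$--disk is a counterexample: its reduced \v{C}ech cohomology vanishes in all positive degrees with any coefficients, yet its dimension is $n$. The correct Alexandroff theorem equates covering dimension with \emph{cohomological dimension}, which is defined via cohomology of \emph{pairs} $(X,A)$ for closed $A\subseteq X$ (equivalently, via compactly supported cohomology of open subsets). Vanishing of $\check{H}^k(\partial(G,\mc{P});A)$ for $k\geq n$ tells you nothing about the cohomology of proper closed subsets, so you cannot conclude $\dim\partial(G,\mc{P})\leq n-1$ this way.

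The paper obtains the upper bound geometrically: since $\partial(G,\mc{P})$ is a $Z$--set in the $n$--dimensional ANR $\overline{\mc{X}}$ (Theorem~\ref{thm:zset} together with Assumption~\ref{assumption:dim}), a result of Bestvina--Mess \cite[Proposition~2.6]{BM91} gives $\dim\partial(G,\mc{P})<\dim\mc{X}=n$. This is where the work done earlier in the section (arranging $\mc{X}$ to be $n$--dimensional) pays off.

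Your parenthetical route through Theorem~\ref{topdimthm} does work, as the paper itself notes, though your justification of the hypothesis is slightly off: that each $P$ is \PD{n-1} does not by itself give $\op{cd}(G)=n-1$; rather, for a \PD{n} pair the group $G$ is an $(n-1)$--dimensional duality group \cite{BE78}, which is what gives $\op{cd}(G)=n-1<n=\op{cd}(G,\mc{P})$.
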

\begin{proof}
  By Theorem \ref{thm:zset}, $\partial(G,\mc{P})$ is a $Z$--set compactification of $\mc{X}$.
  
  By Proposition 2.6 of \cite{BM91} (see \cite{GuilbaultTirel} for an alternate proof), the dimension of $\partial(G,\mc{P})$ is strictly less than the dimension of $\mc{X}$, so $\dim \partial(G,\mc{P})\leq n-1$.  On the other hand Theorem \ref{thm:maintheorem} gives $\check{H}^{n-1}(\partial(G,\mc{P}))\cong H^n(G,\mc{P};\Z G)\cong \Z \neq 0$, so $\dim\partial(G,\mc{P})\geq n-1$.
  \end{proof}

If $G$ acts nontrivially on $\tilde{\Z}$ it has an index $2$ subgroup $H$ which does act trivially.  Let $\mc{P}_H$ be the induced peripheral structure on $H$ as in Definition \ref{def:induced}.  Then $(H,\mc{P}_H)$ is relatively hyperbolic, with the same Bowditch boundary as $(G,\mc{P})$, by Lemma \ref{lem:finiteindex}.  Moreover, $(G,\mc{P}_H)$ is a \PD{n} pair with trivial action on $H^n(H,\mc{P}_H;\Z H)$ by \cite[Theorem 7.6]{BE78}.
So, to prove Theorem \ref{thm:pdpair}, it suffices to prove the theorem in the case that $\tilde{\Z}$ has a trivial $G$--action.

\begin{assumption}\label{assumption:op}
$\tilde{\Z}$ has a trivial $G$--action.
\end{assumption}

\begin{remark}
  Let $M$ be equal either to the $k$--chains of $\mc{X}$ or the $k$--cochains of compact support (either cellular or simplicial).  Then for each $m\in M$ there is a well-defined \emph{support} of $m$ in $\mc{X}$:
  \begin{itemize}
  \item If $M$ is the $k$--chains, and $m = \sum_{i=1}^l\lambda_i \sigma_i$ is an expression as a sum with $\sigma_i=\sigma_j$ only when $i=j$, then $\supp(m) = \cup\{\sigma_i\mid \lambda_i \neq 0\}$.
  \item  If $M$ is the compactly supported $k$--cochains, and $m\in M$, then $\supp(m) = \cup\{\sigma\mid m(\sigma)\neq 0\}$.
  \end{itemize}
  Supports have the following nice properties:
  \begin{enumerate}
  \item For each $g\in G$, $m\in M$, $\supp(gm) = g\cdot\supp(m)$.
  \item For any $m,n\in M$, $\supp(m+n)\subset \supp(m)\cup\supp(n)$.
  \end{enumerate}
  We will refer to such an $M$ as a \emph{$G$--module with supports in $\mc{X}$}.
\end{remark}
  
  The following definition of ``regularity'' is an adaptation of \cite[Definition 2.5]{B96}.  Showing that the compactly supported cochains form a regular complex will be the key to showing the boundary is a homology manifold.
  \begin{figure}[htbp]
    \centering
    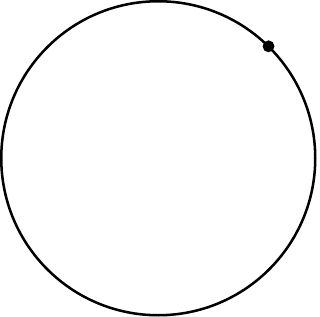
    \caption{The idea of regularity.}
    \label{fig:regular}
  \end{figure}
\begin{definition}\label{def:regular}
  Suppose that $\underline{C} = \{\cdots \to  C_{i+1}\stackrel{\partial}{\rightarrow} C_i\to \cdots\}$ is a finite length chain complex of $G$--modules with supports in $\mc{X}$.  We say $\underline{C}$ is \emph{regular} if, for every $z\in \partial(G,\mc{P})$, and every open neighborhood $U\subset \overline{\mc{X}}$ of $z$, there is a smaller open neighborhood $V$ so that whenever $c$ is an $i$--boundary with $\supp(c)\subset V$, then $c = \partial d$ for some $d$ with $\supp(d)\subset U$.
\end{definition}
The following lemma doesn't depend on $\dim\mc{X} = n$.
\begin{lemma}\label{lem: chain complex regular}
  The cellular chain complex of $\mc{X}$ is regular.
\end{lemma}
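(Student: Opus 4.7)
Since $(G, \mc{P})$ is type $F$, the space $\mc{X}$ is contractible, so every cellular cycle in $\mc{X}$ is already a cellular boundary; the content of regularity is entirely about controlling the support of the bounding chain when the cycle is supported near a boundary point.

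Given $z \in \partial(G, \mc{P})$ and an open neighborhood $U \subseteq \overline{\mc{X}}$ of $z$, the plan is to produce $V$ in two nested stages. First, apply Proposition \ref{prop:conditioniii} together with iterated application of Lemma \ref{lem:HLC lemma} to find nested open neighborhoods $V \subseteq V_1 \subseteq U$ of $z$ such that the singular homology map $H_i(V \cap \mc{X};\Z) \to H_i(V_1 \cap \mc{X};\Z)$ is zero in every degree. Second, use local finiteness of $\mc{X}$ and the fact that the closed stars of cells approaching the boundary become arbitrarily small in the topology of $\overline{\mc{X}}$ to further arrange that every cell of $\mc{X}$ whose closure meets $V$ has its closed star in $V_1$, and every cell whose closure meets $V_1$ has its closed star in $U$. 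This simplicial shrinking ensures that for each of the pairs $V \subset V_1$ and $V_1 \subset U$, the open-star neighborhood of the smaller subcomplex lies inside the larger subcomplex.

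Given a cellular $i$-cycle $c$ with $\supp(c) \subseteq V$, view it simultaneously as a singular cycle in $V \cap \mc{X}$ and as a cellular cycle in the subcomplex $\mc{X}_V$ of cells of $\mc{X}$ with closure in $V$. The nested subcomplexes $\mc{X}_V \subseteq \mc{X}_{V_1} \subseteq \mc{X}_U$ together with their open-star neighborhoods in $\mc{X}$ give a ladder of inclusions and deformation retractions that compares cellular homology of the subcomplexes with singular homology of the open sets $V \cap \mc{X}$, $V_1 \cap \mc{X}$, and $U \cap \mc{X}$. A diagram chase combining the vanishing of the class of $c$ in $H_i(V_1 \cap \mc{X})$ with these homotopy equivalences then shows that the cellular class of $c$ already vanishes in $H_i(\mc{X}_U; \Z)$, producing a cellular chain $d$ in $\mc{X}_U$ with $\partial d = c$ and $\supp(d) \subseteq U$.

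The main obstacle is the simplicial-to-topological comparison that lets the diagram chase work, namely verifying that the subcomplex inclusions $\mc{X}_V \hookrightarrow \mc{X}_{V_1} \hookrightarrow \mc{X}_U$ sit inside their respective open-set neighborhoods in a way that induces isomorphisms on homology. This is made possible by careful arrangement of the neighborhoods in the second stage, exploiting local finiteness of $\mc{X}$ and the structure of the topology on $\overline{\mc{X}}$ near the boundary.
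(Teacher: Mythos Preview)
Your approach is essentially the paper's: both iterate Proposition~\ref{prop:conditioniii} and apply Lemma~\ref{lem:HLC lemma} to get nested neighborhoods with trivial induced homology maps, and both sandwich an open neighborhood with a subcomplex to pass from singular to cellular homology. The paper's execution is more economical: given $U$, it first chooses $W\subset U$ so that there is a subcomplex $L$ with $W\subset L\subset U$, and only then finds $V\subset W$ with $H_i(V)\to H_i(W)$ trivial; a cellular cycle supported in $V$ is then singularly null in $W\subset L$, hence cellularly null in $L\subset U$, and one is done.

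Your three-layer ladder $\mc{X}_V\subset\mc{X}_{V_1}\subset\mc{X}_U$ together with ``deformation retractions'' onto open-star neighborhoods is an unnecessary detour, and the retraction claim is not obviously true as stated. What you actually need (and what your shrinking already gives, once you choose the subcomplex sandwich \emph{before} the homological vanishing as the paper does) is just the chain of inclusions $\supp(c)\subset V\subset L\subset U$ with $L$ a subcomplex: the cycle dies in singular $H_i(L)$, which equals cellular $H_i(L)$. So what you flag as the ``main obstacle'' can be bypassed entirely; no homotopy equivalence between $\mc{X}_W$ and $W\cap\mc{X}$ is required.
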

\begin{proof}
Let $z\in\partial(G,\mc{P})$.
We will denote a subset of $\bar{\mc{X}}$ as $\bar{S}$ and we will denote its intersection with $\mc{X}$ as $S$.
Letting $\bar{U}$ be an arbitrary neighborhood of $z$, we can take $\bar{W}\subset\bar{U}$ such that there is a subcomplex $L$ of $\mc{X}$ with $W\subset L\subset U$.
We can also take $\bar{V}$ such that the inclusion $V\hookrightarrow W$ induces the trivial homomorphism on $H_i$ for $i\le n$.
This follows from using Proposition \ref{prop:conditioniii} to obtain open sets $\bar{V}_n\subset\bar{V}_{n-1}\subset\cdots\subset\bar{U}$ with each $V_i\hookrightarrow V_{i-1}$ inducing the trivial map on $\pi_i$ for $i\le n$ and applying Lemma \ref{lem:HLC lemma}.
Now, if we have a cellular cycle supported in $\bar{V}$, it must be bounded by a chain in $L$.
\end{proof}

In this section and the next one we will be interested in maps between $G$--modules with supports which do not move supports too much.
  \begin{definition}\label{def:boundeddisplacement}
    Let $M$ and $N$ be $G$--modules with supports in $\mc{X}$.
    A function $f\co M\to N$  has \emph{bounded displacement} if there is a number $R>0$ so that, for all $m\in M$, $\supp(f(m))$ is contained in a cellular $R$--neighborhood of $\supp(m)$.  If we need to be specific about $R$ we say that $f$ has \emph{displacement bounded by $R$}.
  \end{definition}
  For example, the cellular boundary and coboundary maps have bounded displacement, with displacement bounded by $1$.

\subsection{Regularity of Cochains}
In this subsection, we prove that the complex of simplicial cochains with compact support of $\mc{X}$ is regular.
The proof relies on comparing the cochains with compact support to the chains and controlling the differentials.
We will adopt the following notation for the remainder of this section.

\begin{notation}
Let $j>0$ be an integer.
We will use $\mc{X}_{\le j}$ and $\mc{X}_{<j}$ to denote the subspace of depth at most $j$ and the subspace of depth less than $j$.
Similarly, $\mc{X}_{\ge j}$ and $\mc{X}_{>j}$ will denote the subspace of depth at least $j$ and the subspace of depth greater than $j$.
We will use $\mc{X}_j$ to denote the subspace of depth $j$.
\end{notation}

\begin{lemma}\label{lem: inclusion of open horoball}
The inclusion $(j,\infty)\times E_P\rightarrow\mc{X}$ induces isomorphisms on $H_c^*$ and $H_*$.
\end{lemma}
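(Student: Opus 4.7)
The statement has two parts. The homology part is immediate: $(j,\infty) \times E_P$ is contractible (it is the product of a contractible ray and the contractible space $E_P$, the universal cover of a classifying space for $P$), and $\mc{X}$ is contractible by our choice of $N$ under the running type $F$ assumption. Hence the inclusion is trivially an isomorphism on $H_*$.

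For compactly supported cohomology, the plan is to show both sides equal $\Z$ concentrated in degree $n$ and then to identify the induced map as $\pm\mathrm{id}$. For the horoball, $[j,\infty) \times E_P$ has vanishing $H_c^*$: the one-point compactification of $[j,\infty)$ is a closed interval, so $H_c^*([j,\infty)) = 0$, and the K\"unneth formula gives $H_c^*([j,\infty) \times E_P) = 0$. The long exact sequence for the open inclusion $(j,\infty) \times E_P \hookrightarrow [j,\infty) \times E_P$ with closed complement $\{j\} \times E_P \cong E_P$ then yields $H_c^{k+1}((j,\infty) \times E_P) \cong H_c^k(E_P) \cong H^k(P;\Z P)$; by Proposition \ref{prop: subgroups of pd n pair}, this equals $\Z$ in degree $n-1$ and vanishes elsewhere. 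For $\mc{X}$, Proposition \ref{prop:topiso} together with the $\PD(n)$ assumption gives $H_c^*(\mc{X}) \cong H^*(G,\mc{P};\Z G) \cong \Z$ concentrated in degree $n$.

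To see the inclusion induces an isomorphism in degree $n$, I would factor through the union of all open horoballs at depth $>j$: $(j,\infty)\times E_P \hookrightarrow \mc{X}_{>j} \hookrightarrow \mc{X}$. The first inclusion embeds one component of the disjoint union $\mc{X}_{>j}$, hence injects a single summand $\Z \hookrightarrow \bigoplus_{gP}\Z \cong \Z G/\mc{P}$ on $H_c^n$. For the second, apply the long exact sequence of the open--closed pair $(\mc{X}, \mc{X}_{\le j})$; a straightforward adaptation of the proof of Addendum \ref{add: part of main thm proof} to arbitrary depth $\le j$ identifies $H_c^*(\mc{X}_{\le j}) \cong H^*(G;\Z G)$, which under $\PD(n)$ is concentrated in degree $n-1$ (where it equals the dualizing module $D$, since $G$ is an $(n-1)$-dimensional duality group). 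The relevant portion of the long exact sequence therefore reduces to a short exact sequence $0 \to D \to \Z G/\mc{P} \to \Z \to 0$, exhibiting the second map as a $G$-equivariant surjection onto the trivial $\Z G$-module.

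The main obstacle is showing this surjection restricts to $\pm\mathrm{id}$ on each $\Z$ summand of $\Z G/\mc{P}$, since this is exactly what is needed for the inclusion of a single open horoball to induce an isomorphism on $H_c^n$. By $G$-equivariance the map has the form $c_P \cdot \mathrm{aug}$ on each summand $\Z[G/P]$, and we must show that each $c_P = \pm 1$. This is the cohomological dual of the standard fact (cf.\ \cite[Theorem 6.2]{BE78}) that, for a $\PD(n)$ pair, the connecting homomorphism in the homology long exact sequence sends the fundamental class $[G,\mc{P}] \in H_n(G,\mc{P};\tilde\Z)$ to a signed sum $\sum_P \pm[P]$ of fundamental classes of the peripherals.
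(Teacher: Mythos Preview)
Your overall strategy matches the paper's: use the long exact sequence of the open--closed decomposition, identify the resulting short exact sequence in degree $n$ with the augmentation sequence via Poincar\'e duality, and read off that inclusion of one horoball component is an isomorphism.

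The gap is exactly where you flag it. You obtain a $G$--equivariant surjection $\Z G/\mc{P}\to\Z$, hence of the form $\sum_P c_P\cdot\mathrm{aug}$, but your justification that each $c_P=\pm 1$ is not an argument. An appeal to ``the cohomological dual of \cite[Theorem 6.2]{BE78}'' would, via the ladder of Theorem~\ref{thm: pd n isomorphism}, identify the connecting map in the \emph{group-cohomological} long exact sequence $H^{n-1}(\mc{P};\Z G)\to H^n(G,\mc{P};\Z G)$ with the augmentation. But your short exact sequence comes from the \emph{topological} pair $(\mc{X},\mc{X}_{\le j})$, whereas Addendum~\ref{add: part of main thm proof} identifies the group-cohomological sequence with the long exact sequence of a \emph{different} pair, namely $(\mc{X}_{\le j},\mc{X}_j)$. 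Nothing you have written links the two, so you cannot yet conclude that your map $H_c^n(\mc{X}_{>j})\to H_c^n(\mc{X})$ becomes the augmentation under your component-wise identification $H_c^n(\mc{X}_{>j})\cong\Z G/\mc{P}$.

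The paper closes this by building explicit isomorphisms $\Phi\colon H_c^{n-1}(\mc{X}_j)\to H_c^n(\mc{X}_{>j})$ and $\Psi\colon H_c^n(\mc{X}_{<j})\to H_c^n(\mc{X})$ that assemble the two long exact sequences into a commutative ladder. Both are extracted from the long exact sequence of the pair $(\mc{X},\mc{X}_j)$: writing the complement as $\mc{X}_{<j}\sqcup\mc{X}_{>j}$, the connecting map $\delta\colon H_c^{n-1}(\mc{X}_j)\to H_c^n(\mc{X}_{<j})\oplus H_c^n(\mc{X}_{>j})$ has second projection $\Phi$, and the restriction of the inclusion-induced map to the first summand gives $-\Psi$. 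The crucial point is that $\Phi$ coincides with the connecting map of the pair $(\mc{X}_{\ge j},\mc{X}_j)$, and $\mc{X}_{\ge j}$ is a \emph{disjoint union} of closed horoballs, so $\Phi$ is visibly component-preserving. Composing with the Addendum isomorphisms and Theorem~\ref{thm: pd n isomorphism}, the entire middle-column identification $H_c^n(\mc{X}_{>j})\cong H_0(\mc{P};\Z G)=\Z G/\mc{P}$ therefore respects the summand decomposition, and the bottom row is literally $0\to\Delta\to\Z G/\mc{P}\xrightarrow{\epsilon}\Z\to 0$. That is what forces the composite on each summand to be $\pm 1$.
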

\begin{proof}
For $H_*$ the assertion is trivial.  For $H_c^*$ we make heavy use of the long exact sequences from Theorem \ref{thm: LES cohom with compact support} in the Appendix.

We will describe arrows which make the following diagram commute, such that all vertical arrows are isomorphisms.
\begin{equation}\label{diag: inclusion of open horoball}
\begin{tikzcd}
0\arrow{r}&H_c^{n-1}(\mc{X}_{\le j})\arrow{r}&H_c^n(\mc{X}_{>j})\arrow{r}&H_c^n(\mathcal{X})\arrow{r}&0\\
0\arrow{r}&H_c^{n-1}(\mc{X}_{\le j})\arrow{r}\arrow{d}\arrow{u}&H_c^{n-1}(\mc{X}_j)\arrow{r}\arrow{d}\arrow{u}{\Phi}&H_c^n(\mc{X}_{<j})\arrow{r}\arrow{d}\arrow{u}{\Psi}&0\\
0\arrow{r}&H^{n-1}(G;\Z G)\arrow{r}\arrow{d}&H^{n-1}(\mathcal{P};\Z G)\arrow{r}\arrow{d}&H^n(G,\mathcal{P};\Z G)\arrow{r}\arrow{d}&0\\
0\arrow{r}&H_1(G,\mathcal{P};\Z G)\arrow{r}&H_0(\mathcal{P};\Z G)\arrow{r}&H_0(G;\Z G)\arrow{r}&0
\end{tikzcd}
\end{equation}
The first row is from the long exact sequence given by the pair $(\mathcal{X},\mc{X}_{\le j})$ and second row is from the long exact sequence given by the pair $(\mc{X}_{\le j},\mc{X}_j)$.

We next choose isomorphisms $\Phi$ and $\Psi$ so that the top right square in the diagram commutes.
Then there will exist a unique isomorphism at the top left completing that commutative square.  (In fact this isomorphism is the identity, but this is not important for our argument.)
By considering the pair $(\mc{X},\mc{X}_j)$ we obtain the exact sequence
\begin{equation}\label{eq: aux ses}
0\rightarrow H_c^{n-1}(\mc{X}_j)\xrightarrow{\delta}H_c^n(\mathcal{X}_{<j}\sqcup\mc{X}_{>j})\xrightarrow{f} H_c^n(\mathcal{X})\rightarrow0
\end{equation}
Since $H_c^n(\mc{X}_{<j}\sqcup\mc{X}_{>j})\cong H_c^n(\mc{X}_{<j})\oplus H_c^n(\mc{X}_{>j})$ there are maps $p_1$ and $p_2$ projecting onto the summands and sections $\iota_1$ and $\iota_2$.
We obtain maps $H_c^{n-1}(\mc{X}_j)\rightarrow H_c^n(\mc{X})$ which factor through $H_c^n(\mc{X}_{>j})$ and $H_c^n(\mc{X}_<j)$ by taking $-f\circ\iota_1\circ p_1\circ\delta$ and $f\circ\iota_2\circ p_2\circ\delta$.

Note that $p_1\circ\delta$ agrees with the map $H_c^{n-1}(\mc{X}_j)\rightarrow H_c^n(\mc{X}_{<j})$ in the second row of \eqref{diag: inclusion of open horoball}.
We define $\Psi=-f\circ\iota_1$.  The long exact sequence for $(\mc{X},\mc{X}_{\ge j})$ together with Corollary \ref{cor: cohom of product with half open interval} shows that $f\circ\iota_1$ is an isomorphism, so $\Psi$ is also an isomorphism.

Similarly we define $\Phi = p_2\circ\delta$.  The long exact sequence for $(\mc{X}_{\geq j},\mc{X}_j)$ together with Corollary \ref{cor: cohom of product with half open interval} shows that $\Phi$ is an isomorphism.
The composition $f\circ\iota_2$ agrees with the map $H_c^n(\mc{X}_{>j})\rightarrow H_c^n(\mc{X})$ in the top row of the diagram.
By exactness of \eqref{eq: aux ses}, we have that $f\circ\iota_1\circ p_1\circ\delta+f\circ\iota_2\circ p_2\circ\delta=0$ so the square in the top right hand corner of \eqref{diag: inclusion of open horoball} commutes.

The vertical arrows between the second and third rows are the isomorphisms from \ref{add: part of main thm proof}. 

The vertical arrows between the last two rows are the isomorphisms from Theorem \ref{thm: pd n isomorphism}.

Since the bottom row is isomorphic to the short exact sequence
\[
0\rightarrow\Delta\rightarrow\Z G/\mathcal{P}\xrightarrow{\epsilon}\Z\rightarrow0,
\]
so is the top row.

The inclusion of a component $(j,\infty)\times E_P\hookrightarrow\mc{X}_{>j}$ induces the inclusion of a $\Z$ summand into $\Z G/\mathcal{P}$, so the composition $H_c^n((j,\infty)\times E_P)\rightarrow H_c^n(\mc{X}_{>j})\rightarrow H_c^n(\mathcal{X})$ is an isomorphism.
\end{proof}

Lemma \ref{lem: inclusion of open horoball} is about singular homology and singular cohomology with compact supports but we would like to work with simplicial chains and cochains.
Fix a homeomorphism $\rho\co \R\to (0,\infty)$ which restricts to the identity on $[1,\infty)$.  Now, for each $P\in \mc{P}$, give $\R\times E_P$ a $\Z\times P$--equivariant simplicial structure so that $(\rho,\mathbf{1}_{E_P})|_{[1,\infty)\times E_P}$ is a simplicial inclusion into $\mc{X}$.  Extending $G$--equivariantly, we get a simplicial structure on all of $\mc{X}_{>0}$.

Let $C_{i,\Delta}$ denote simplicial chains and let $C_{i,\sigma}$ denote singular chains.
Consider the following maps, where the first and third maps are the inclusions of the simplicial chains into singular chains and the middle map is induced by the inclusion $\mc{X}_{>0}\rightarrow\mc{X}$.
\begin{equation}\label{eq: simplicial singular}
C_{i,\Delta}(\mc{X}_{>0})\rightarrow C_{i,\sigma}(\mc{X}_{>0})\rightarrow C_{i,\sigma}(\mc{X})\leftarrow C_{i,\Delta}(\mc{X})
\end{equation}
A simplicial chain in $C_{i,\Delta}(\mc{X}_{>0})$ with support in $\mc{X}_{\ge1}$ will get mapped to a chain in the image of $C_{i,\Delta}(\mc{X})$, allowing us to identify it with a simplicial chain in $\mc{X}$.
There are similar maps on cochains with compact support.

There are also the following commuting diagrams of simplicial chain and cochain groups.
\[
\begin{tikzcd}
C_i(\mc{X})\arrow{r}{\partial}&C_{i-1}(\mc{X})\\
C_i(\mc{X}_{\ge1})\arrow{r}{\partial}\arrow{u}\arrow{d}&C_{i-1}(\mc{X}_{\ge1})\arrow{u}\arrow{d}\\
C_i(\mc{X}_{>0})\arrow{r}{\partial}&C_{i-1}(\mc{X}_{>0})
\end{tikzcd} \hspace{1cm}
\begin{tikzcd}
C_c^i(\mc{X})\arrow{r}{\delta}&C_c^{i+1}(\mc{X})\\
C_c^i(\mc{X}_{\ge n-i+1})\arrow{r}{\delta}\arrow{u}\arrow{d}&C_c^{i+1}(\mc{X}_{\ge n-i+1})\arrow{u}\arrow{d}\\
C_c^i(\mc{X}_{>0})\arrow{r}{\delta}&C_c^{i+1}(\mc{X}_{>0})
\end{tikzcd}
\]
Commutativity of these diagrams allow us to identify cycles in $\mc{X}$ whose supports have sufficiently large depth with cycles in $\mc{X}_{>0}$, and similarly for cocycles with compact support.
Lemma \ref{lem: inclusion of open horoball} implies that the maps in (\ref{eq: simplicial singular}) induce isomorphisms on homology, as do the corresponding maps on cochains with compact support.
In particular a cycle in $\mc{X}$ with support in $\mc{X}_{>0}$ is a boundary if and only if it is a boundary in $\mc{X}_{>0}$ (and similarly for cocycles with compact support).

See Definition \ref{def:boundeddisplacement} for the definition of bounded displacement.
\begin{lemma}\label{lem: lifting maps of bounded displacement}
  Let $M$ be either $C_i(\mc{X})$ or $C_c^{n-i}(\mc{X})$ (the simplicial chains and cochains with compact support)
Let $(C_*,d)$ denote either the chain complex $(C_*(\mc{X}),\partial)$ or $(C_c^{n-*}(\mc{X}),\delta)$.
Suppose we have the following diagram of $\Z G$--modules.
\[
\begin{tikzcd}
M\arrow{rd}[above right]{f'}&\\
C_j\arrow{r}[above]{d}&C_{j-1}
\end{tikzcd}
\]
Suppose that the image of $f'$ is contained in the image of $d$.
If $f'$ has bounded displacement, then there is a map $f:M\rightarrow C_j$ of bounded displacement making the diagram commute.
\end{lemma}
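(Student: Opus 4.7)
The plan is to construct $f$ by choosing, for each simplex $\sigma$ of $\mc{X}$, a lift $f(\sigma) \in C_j$ of $f'(\sigma)$ whose support lies in a uniformly bounded cellular neighborhood of $\sigma$, then extending $G$-equivariantly. Since $f'$ and $d$ are $G$-equivariant, it suffices to specify $f$ on a set of $G$-orbit representatives of simplices (made invariant under the finite cell stabilizers, by averaging if necessary).

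The heart of the argument is a uniform local lifting property: for every $R$ there exists $R' = R'(R)$ such that any $(j-1)$-cycle (respectively compactly supported $(n-j+1)$-cocycle) $c$ supported in the cellular $R$-neighborhood $N_R(\sigma)$ of a simplex $\sigma$, and lying in the image of $d$ globally, already satisfies $c = d c''$ for some $c''$ supported in $N_{R'}(\sigma)$. Applied to $c = f'(\sigma)$ (which lies in $N_R(\sigma)$ by the displacement hypothesis), this yields the desired $f(\sigma)$. Uniformity in $\sigma$ relies on a finiteness of local combinatorial types: although $\mc{X}$ has infinitely many $G$-orbits of simplices, the simplicial embedding $(t,k) \mapsto (t+1,k)$ on each horoball from Definition \ref{def:curlyX}, together with cocompactness on the thick part, ensures that the isomorphism class of the pair $(\sigma, N_R(\sigma))$ takes only finitely many values. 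Using contractibility of $\mc{X}$ together with the bounded-neighborhood contractions of Lemma \ref{lem:contract nearby} then produces a single $R'$ that works across all finitely many cases.

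The main obstacle is the compactly supported cochain case, which is not controlled by coarse contractibility alone: a compactly supported cocycle in $N_R(\sigma)$ may cobound only through a cochain of much larger support. Here one appeals to the results in the present subsection---particularly Lemma \ref{lem: inclusion of open horoball} and the identification between simplicial and singular compactly supported cochains on $\mc{X}_{>0}$---to translate the global cobounding condition into a local statement inside a single horoball, where shift invariance again provides the required uniform bound. Once bounded lifts on orbit representatives are in hand, extending by the $G$-action produces a $\Z G$-module map $f$ of bounded displacement with $d \circ f = f'$.
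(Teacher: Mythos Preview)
Your approach is essentially the paper's: define $f$ on a $\Z G$-basis of $M$, use that all but finitely many basis elements lie deep in horoballs, exploit the shift $(t,k)\mapsto (t+1,k)$ together with the $P$-action to reduce to finitely many local bounding problems in $\R\times E_P$, and invoke Lemma \ref{lem: inclusion of open horoball} to pass between global and horoball-local (co)bounding in the cochain case. The paper phrases the finiteness as ``finitely many boundaries in $\R\times E_P$ up to the $\Z\times P$-action that arise as some $f'(e)$'', while you phrase it as ``finitely many isomorphism types of $(\sigma, N_R(\sigma))$''; these are the same reduction.

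One small correction: Lemma \ref{lem:contract nearby} is not the right tool for the step ``finitely many cases $\Rightarrow$ a single $R'$''. That lemma produces contractions in the Rips complex, and pushing them into $\mc{X}$ via the map $r$ of Lemma \ref{lem:fake retraction} gives only depth-preserving, not bounded-displacement, homotopies. What actually finishes the argument is elementary: for each of the finitely many $\Z\times P$-orbits of simplices, $N_R(\sigma)$ is a finite complex, so the group of (co)chains supported there which (co)bound in $\R\times E_P$ is finitely generated; choose a bounding (co)chain for each generator, take $R'$ large enough to contain them all, and transport by the $\Z\times P$-action. Your remark about averaging over finite cell stabilizers is also unnecessary here, since in the type $F$ setting $G$ is torsion-free and $M$ is free on the (dual) simplices.
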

\begin{proof}
(When using $(C_*,d)$, we will call elements in the image of $d$ ``boundaries'' even though they may be coboundaries.  Similarly, we will call arbitrary elements of $C_j$ ``chains'' even though they may be cochains.)

  Let $\mc{B}$ be a basis for $M$ (i.e. $\mc{B}$ contains one simplex or dual simplex per $G$--orbit).  
Since $f'$ has bounded displacement, $f'(e)$ lies in a horoball $[1,\infty)\times E_P$ for all but finitely many $e\in \mc{B}$.  Let $\mc{B}'\subset \mc{B}$ consist of those $e$ for which $f'(e)$ lies in a horoball.

For $e\in \mc{B}'$, we can identify $f'(e)$ with a chain in $\R\times E_P$, using the identification described in the text before the lemma. 
Then, Lemma \ref{lem: inclusion of open horoball} implies that $f'(e)$ represents a boundary in $\mc{X}$ if and only if it represents a boundary in $(0,\infty)\times E_P\cong \R\times E_P$.
Since $f'$ has bounded displacement, there are only finitely many boundaries in $\R\times E_P$ up to the $\Z\times P$--action that will be identified with some $f'(e)$ where $e\in \mc{B}'$.
In particular, there is a $K$ such that, when considered as a boundary of $(0,\infty)\times E_P$, $f'(e)$ is bounded by an element $g(e)$ supported in a cellular $K$-neighborhood of $\op{supp}f'(e)$.

Now we see that all but finitely many $e\in \mc{B}'$ have $f'(e)$ supported in $[K+1,\infty)\times E_P$.  Let $\mc{B}''\subset \mc{B}'$ be the set of such $e$.
For $e\in \mc{B}''$, we can identify $g(e)$ with an element of $C_j$ and define $f(e)=g(e)$.

For the finitely many $e$ in $\mc{B}\setminus \mc{B}''$, we can define $f(e)$ to be any element making the triangle commute.
\end{proof}

\begin{lemma}\label{lem: cocycles distributed throughout horoballs}
There is a constant $K$ such that, for every vertex $v$ of $\mc{X}$, there exists a cocycle $\phii\in C_c^n(\mc{X})$ supported in a cellular $K$-neighborhood of $v$ that represents a generator of $H_c^i(\mc{X})$.
\end{lemma}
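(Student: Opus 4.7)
The plan is to combine Lemma \ref{lem: inclusion of open horoball} with the $G$-action and the horoball shifts to exhibit generators near every vertex with uniformly bounded support. The structural observation is that the combined action of $G$ and the horoball shift semigroups has only finitely many orbits on the vertex set of $\mc{X}$: within each horoball $[0,\infty)\times E_P$, the peripheral $P$ (acting on $E_P$) together with $\Z_{\geq 0}$ (acting by shifts) gives cofinitely many orbits since $B_P$ is finite; there are only $|\mc{P}|$ many $G$-orbits of horoballs; and $E_G$ has finitely many $G$-orbits of vertices.

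For each $P \in \mc{P}$, I would apply Lemma \ref{lem: inclusion of open horoball} together with the K\"unneth decomposition $H_c^n((0,\infty)\times E_P) \cong H_c^1((0,\infty)) \otimes H_c^{n-1}(E_P)$ to construct a generator $\phi_P \in C_c^n(\mc{X})$ of the form $\alpha \times \psi$. Here $\alpha \in C_c^1((0,\infty))$ is dual to a single edge near depth $1$, and $\psi \in C_c^{n-1}(E_P)$ represents a generator of $H_c^{n-1}(E_P) \cong \Z$; such a $\psi$ exists and can be taken compactly supported because $P$ is a PD$(n-1)$ group (Proposition \ref{prop: subgroups of pd n pair}) acting cocompactly on $E_P$. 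The resulting cocycle $\phi_P$ has support within cellular distance $R_P$ of a reference vertex near depth $1$.

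Both the $G$-action (by Assumption \ref{assumption:op}) and the horoball shifts preserve the generator class in $H_c^n(\mc{X})$: the shift acts only on the first K\"unneth factor, sending $\alpha$ to the dual of the shifted edge, and all edge-duals in $C_c^1((0,\infty))$ are cohomologous (any two differ by the coboundary of a cochain dual to a vertex). Hence $G$- and shift-translates of $\phi_P$ remain generators.

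For each combined-orbit representative $v$ of a vertex of $\mc{X}$: if $v$ lies at depth $\geq 1$ in some horoball, apply an element of $G$ composed with a power of the shift to translate $\phi_P$ so that its support lies within cellular distance $R_P$ of $v$. If $v$ lies at depth $0$, cocompactness of $G$ on $E_G$ gives a bounded cellular distance $D$ from $v$ to some horoball boundary vertex, so that a $G$-translate of a suitable $\phi_P$ provides a generator supported within $D+R_P+1$ of $v$. Taking $K$ to be the maximum of these bounds over the finitely many orbit representatives yields the uniform constant. The main obstacle will be managing the K\"unneth product and shift-invariance at the simplicial chain level: the simplicial structure on horoballs is designed so shifts are simplicial, but is not literally a product simplicial structure, so either one works with cellular cochains in a compatible product refinement, or one constructs $\phi_P$ more directly using the PD$(n-1)$ structure on $P$.
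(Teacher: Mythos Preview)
Your approach is correct and close in spirit to the paper's, but organized differently. The paper fixes an arbitrary vertex $v$ at depth $j$, sets up a Mayer--Vietoris-type short exact sequence
\[
0 \to C_c^*(\mc{X}) \to C_c^*(B)\oplus C_c^*(\mc{H}) \to C_c^*(\{j\}\times E_P) \to 0
\]
with $\mc{H}=[j,\infty)\times E_P$ and $B$ its complement, and observes that the connecting map $\delta\co H_c^{n-1}(E_P)\to H_c^n(\mc{X})$ is an isomorphism (both middle terms have vanishing $H_c^*$ by Lemma~\ref{lem: inclusion of open horoball} and Corollary~\ref{cor: cohom of product with half open interval}). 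A generator $\psi$ of $H_c^{n-1}(E_P)$, translated by $P$ to sit near $v$, is then pushed to $\mc{X}$ by the explicit recipe $(\psi,0)\mapsto \phii$ with $\phii$ mapping to $(\delta\psi,0)$; this has displacement one. Since the copies of $E_P$ at all depths are isomorphic, the bound is uniform in $j$.

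Your route---build one product cocycle $\alpha\times\psi$ per $P$, then transport by $G$ and horoball shifts---is essentially the same construction read through the K\"unneth isomorphism rather than the connecting map, and your finite-orbit bookkeeping makes the uniformity of $K$ very transparent. The paper's version has one practical advantage: the Mayer--Vietoris connecting map is purely simplicial, so the issue you flag about the horoball simplicial structure not being a literal product never arises. If you carry out your version, the cleanest fix is exactly what the paper does implicitly: rather than writing $\alpha\times\psi$, extend $\psi$ by zero to one side of the depth-$j$ slice and take its simplicial coboundary in $\mc{X}$.
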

\begin{proof}
Suppose $v$ has depth $j$.
Consider the map $(j,\infty)\times E_P\rightarrow\mc{X}$ in Lemma \ref{lem: inclusion of open horoball} above.
Let $B$ denote the complement of the $(j,\infty)\times E_P$ and let $\mc{H}$ denote $[j,\infty)\times E_P$ considered as a subcomplex of $\mc{X}$.
There is the short exact sequence of (simplicial) cochain complexes.
\[
0\rightarrow C_c^i(\mc{X})\rightarrow C_c^i(B)\oplus C_c^i(\mc{H})\rightarrow C_c^i(E_P)\rightarrow0
\]
Here, we are considering $E_P$ as the subcomplex $\{j\}\times E_P=B\cap\mc{H}$.

Because $(j,\infty)\times E_P\rightarrow\mc{X}$ induces isomorphisms on cohomology with compact support, $H_c^i(B)$ vanishes.
Since $\mc{H}$ is a product with $[j,\infty)$, Corollary \ref{cor: cohom of product with half open interval} implies that $H_c^i(\mc{H})$ also vanishes.
Therefore, the coboundary map $\delta:H^i(E_P)\rightarrow H^{i+1}(\mc{X})$ is an isomorphism.
Now, fix a cocycle $\psi$ that represents a generator of $H^{n-1}(E_P)$.
By translating via the action of $P$, we can assume $\psi$ is supported in a cellular $K'$ neighborhood of $v$ for some constant $K'$ independent of $v$.

We claim that a representative $\phii$ of $\delta[\psi]$ can be chosen so that the assignment $\psi\mapsto\phii$ has bounded displacement.
Indeed, let $(\psi,0)\in C_c^{n-1}(B)\oplus C_c^{n-1}(\mc{H})$.
Then, letting $\phii$ be the element mapping to $(\delta\psi,0)\in C_c^n(B)\oplus C_c^n(\mc{H})$ gives the desired assignment.
Because this assignment has bounded displacement, the result follows.
\end{proof}

\begin{prop}\label{prop: regularity}
    The cellular cochain complex of $\mc{X}$ is regular.
\end{prop}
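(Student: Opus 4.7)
The plan is to transfer the regularity of the chain complex (Lemma~\ref{lem: chain complex regular}) to the cochain complex via a Poincar\'e-duality chain equivalence of bounded displacement. Write $D_* := C_c^{n-*}(\mc X)$, viewed as a chain complex with differential $\delta\colon D_k\to D_{k-1}$. I will construct bounded-displacement chain maps $\Phi\colon C_*(\mc X)\to D_*$ and $\Psi\colon D_*\to C_*(\mc X)$ together with a bounded-displacement chain homotopy $h$ satisfying $\Phi\Psi - 1 = \delta h + h\delta$ on $D_*$. Once these are in hand, given a coboundary $\phi\in C_c^i(\mc X)$ of small support, the cycle $\Psi(\phi)$ has small support and is itself a boundary (as $\Psi$ is a chain equivalence), so regularity of the chain complex produces $e\in C_{n-i+1}(\mc X)$ with $\partial e = \Psi(\phi)$ in a controlled neighborhood. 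Then $d := \Phi(e) - h(\phi)\in C_c^{i-1}(\mc X)$ satisfies $\delta d = \Phi(\partial e) - \delta h(\phi) = \Phi\Psi(\phi) - (\Phi\Psi(\phi)-\phi) = \phi$, using $\delta\phi=0$ and the chain-homotopy identity.

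To build $\Phi$, I would define $\Phi_0\colon C_0(\mc X)\to C_c^n(\mc X)$ on each $G$-orbit representative vertex $v$ by $\Phi_0(v) := \phi_v$, using the cocycle supplied by Lemma~\ref{lem: cocycles distributed throughout horoballs}, and extend $G$-equivariantly; this has bounded displacement. Because every $\phi_v$ represents the same generator of $H_c^n(\mc X)\cong\Z$, the image of $\Phi_0\partial\colon C_1\to C_c^n$ lies in the $\delta$-coboundaries. The $\PD{n}$ hypothesis gives $H_c^j(\mc X)=H^j(G,\mc P;\Z G)=0$ for $j<n$, so inductively the image of $\Phi_{k-1}\partial\colon C_k\to D_{k-1}$ consists of cocycles that are automatically coboundaries, and Lemma~\ref{lem: lifting maps of bounded displacement} provides the bounded-displacement lift $\Phi_k\colon C_k\to D_k$ with $\delta\Phi_k = \Phi_{k-1}\partial$. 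The resulting $\Phi$ is a quasi-isomorphism, since it sends the generator of $H_0(\mc X)$ to a generator of $H_c^n(\mc X)$ and both complexes are concentrated in a single homological degree. To produce $\Psi$ and $h$, I would pass to the mapping cone $\op{Cone}(\Phi)_k = C_{k-1}\oplus D_k$, which is acyclic, and inductively construct a contracting homotopy of bounded displacement by applying (the proof method of) Lemma~\ref{lem: lifting maps of bounded displacement} to each free summand. The $D_k\to C_k$ component of this contracting homotopy is the desired chain map $\Psi$, and the $D_k\to D_{k+1}$ component is the desired chain homotopy $h$.

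Given $z\in\partial(G,\mc P)$ and an open neighborhood $U$ of $z$, I would let $R$ be a common bound on the displacements of $\Phi$, $\Psi$, and $h$, shrink $U$ to an open neighborhood $U_1\ni z$ whose cellular $R$-neighborhood lies in $U$, apply Lemma~\ref{lem: chain complex regular} to $U_1$ to obtain an open neighborhood $V_1\subseteq U_1$ in which every cellular boundary bounds within $U_1$, and finally shrink once more to an open neighborhood $V\subseteq V_1$ whose cellular $R$-neighborhood lies in $V_1$. Then any coboundary $\phi$ supported in $V$ yields $\supp\Psi(\phi)\subseteq V_1$ and, after filling via the chain regularity, $\supp\Phi(e)\cup\supp h(\phi)\subseteq U$; the computation in the first paragraph produces the required $d$ with $\delta d=\phi$ and $\supp d\subseteq U$.

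The main obstacle will be the second step. Although acyclic chain complexes of free $\Z G$-modules admit contracting homotopies for purely algebraic reasons, our homotopy must be of bounded displacement, and Lemma~\ref{lem: lifting maps of bounded displacement} was only stated with source $M$ of the form $C_i(\mc X)$ or $C_c^{n-i}(\mc X)$ and target one of the two standard complexes. The cone's modules are direct sums $C_{k-1}\oplus D_k$, so the inductive construction has to be organized summand-by-summand, and one has to verify that the proof of the lemma adapts to this slightly more general setting while maintaining a uniform displacement bound through all degrees of the induction.
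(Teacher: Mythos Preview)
Your proposal is correct and follows essentially the same strategy as the paper: build bounded-displacement chain maps between $C_*(\mc X)$ and $C_c^{n-*}(\mc X)$ together with a bounded-displacement homotopy, then transfer regularity from chains (Lemma~\ref{lem: chain complex regular}) to cochains via exactly the ``apply $\Psi$, fill using chain regularity, push back with $\Phi$ and correct by $h$'' computation you wrote down.

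The one organizational difference is worth noting because it dissolves the obstacle you flag. The paper constructs its $f$ (your $\Psi$) directly rather than through a mapping-cone contraction: it sets $f_0\colon C_c^n(\mc X)\to C_0(\mc X)$ by $e^*\mapsto k(e)\,p(e)$, where $[e^*]=k(e)\alpha$ in $H_c^n(\mc X)$ and $p(e)$ is a fixed vertex of the top simplex $e$, and then lifts degree by degree using Lemma~\ref{lem: lifting maps of bounded displacement} with target $(C_*(\mc X),\partial)$. Your $\Phi$ is the paper's $g$, built identically from Lemma~\ref{lem: cocycles distributed throughout horoballs}. The homotopy $h$ is then produced by a third, independent induction with target $(C_c^{n-*}(\mc X),\delta)$. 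Each of the three inductions thus invokes Lemma~\ref{lem: lifting maps of bounded displacement} exactly as stated---source a single $C_i$ or $C_c^{n-i}$, target one of the two standard complexes---so no extension of the lemma to cone-type targets is required. Your cone approach can also be made to work by lifting in two stages (first the $C_*$--component via $\partial$, then the residual $D_*$--component via $\delta$), but the paper's separation into three straight-line constructions is the cleaner route.
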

\begin{proof}
  We adapt the proof of \cite[Proposition 2.7]{B96} to the space $\mc{X}$.
  
  By Assumption \ref{assumption:dim},  $\dim(\mathcal{X}) = n$.  Moreover the compactly supported cohomology of $\mc{X}$ is zero except in dimension $n$, where it is $\Z$.  By Assumption \ref{assumption:op}, the $G$--action on $\Z$ is trivial.

  By our assumptions we have $H_c^k(\mc{X})$ is $\Z$ when $k=n$ and $0$ otherwise.  Moreover $\mc{X}$ is contractible.  We therefore get free $\Z G$ resolutions of $\Z$,
\[
\begin{tikzcd}
0\arrow{r}& C_c^0(\mathcal{X})\arrow{r}&\cdots\arrow{r}& C_c^n(\mathcal{X})\arrow{r}&\Z\arrow{r}&0
\end{tikzcd}
\]
and
\[
\begin{tikzcd}
0\arrow{r}& C_n(\mathcal{X})\arrow{r}&\cdots\arrow{r}& C_0(\mathcal{X})\arrow{r}&\Z\arrow{r}&0.
\end{tikzcd}
\]

We construct chain maps $f:C_c^{n-i}(\mathcal{X})\rightarrow C_i(\mathcal{X})$, $g:C_i(\mathcal{X})\rightarrow C_c^{n-i}(\mathcal{X})$ and a homotopy $h$ between $g\circ f$ and the identity on $C_c^*(\mathcal{X})$ such that each of these maps has bounded displacement.

We first define $f$ on a the natural basis of cochains dual to individual cells.  Fix a generator $\alpha$ for $H^n_c(\mc{X})$.  Let $e^*$ be the cochain dual to the $n$--simplex $e$ (i.e. $e^*(e)=1$, but $e^*(e')=0$ for any cell $e'\neq e$).  There are no $(n+1)$--cochains, so $e^*$ is a cocycle, representing $k(e) \alpha$ where $k(e)\in\Z$.
The $n$--simplex $e$ is the image of an embedding from the standard simplex $\Delta^n$ into $\mc{X}$.  This standard simplex is the convex hull of the standard unit vectors $v_0,\ldots,v_n\in \R^{n+1}$.  Let $p(e)$ be the vertex which is the image of $v_0$.  Define $f_0(e^*) = k(e)p(e)$.
Since the simplicial structure on $\mc{X}$ comes from a $\Delta$--complex structure on the quotient $C = \leftQ{\mc{X}}{G}$, this defines a map of $\Z G$--modules
\[ f_0\co C_c^n(\mc{X})\to C_0(\mc{X}). \]
The map $f_0$ clearly has bounded displacement.
Define $f$ by applying Lemma \ref{lem: lifting maps of bounded displacement} inductively to the following diagram.
\[
\begin{tikzcd}
C_c^{n-i}(\mc{X})\arrow{rd}{f_{i-1}\circ\delta}&\\C_{i}(\mc{X})\arrow{r}{\partial}&C_{i-1}(\mc{X})
\end{tikzcd}
\]

By Lemma \ref{lem: cocycles distributed throughout horoballs} there is a constant $K$ such that, for each vertex there is a cocycle representing $1\in H_c^n(\mathcal{X})$ supported in a $K$-neighborhood of the vertex.
This allows us to define $g_0:C_0(\mathcal{X})\rightarrow C_c^n(\mathcal{X})$.
Now $g$ can be extended to a map of bounded displacement on $C_c^*(\mathcal{X})$ by applying Lemma \ref{lem: lifting maps of bounded displacement} inductively to the following diagram.
\[
\begin{tikzcd}
C_i(\mc{X})\arrow{rd}{g_{i-1}\circ\delta}&\\C_c^{n-i}(\mc{X})\arrow{r}{\delta}&C_c^{n-i+1}(\mc{X})
\end{tikzcd}
\]

For the homotopy, we need $h\delta+\delta h=\op{Id}-g\circ f$.
This can be done by setting the map $\Z\rightarrow C_c^n(\mathcal{X})$ to be $0$ and applying Lemma \ref{lem: lifting maps of bounded displacement} to the following diagram.
\[
\begin{tikzcd}
C_c^i(\mc{X})\arrow{rd}{Id-g\circ f-h\delta}&\\C_c^{i-1}(\mc{X})\arrow{r}{\delta}&C_c^i(\mc{X})
\end{tikzcd}
\]
There is some $M>0$ so that all the maps $f_i$, $g_i$, $\delta$, and $h$ have displacement bounded by $M$.

  Let $z\in\partial(G,\mc{P})$ and let $U$ be an open neighborhood of $z$ in $\overline{\mc{X}}$.  Let $W$ be the subcomplex of $\mc{X}$ consisting of those simplices whose $3M$--cellular neighborhoods are completely contained in $U$, and let $U_1$ be the interior of $W$.

  Let $V_1\subset U_1$ be an open neighborhood of $z$ so that every chain with support in $V_1$ which is an $\mc{X}$--boundary  is the boundary of a chain with support in $U_1$.  Finally let $V\subset V_1$ be a neighborhood of $z$ so that the $3M$--cellular neighborhood of every simplex of $V$ is contained in $V_1$

  Now suppose that $b = \delta\phii$ has support in $V$, where $\phii$ is a simplicial $(k-1)$--cochain in $\mc{X}$.  Then $f(b)$ is a boundary with support in $V_1$, so $f(b) = \partial \sigma$ for a chain $\sigma$ with support in $U_1$.  We have $\delta g (\sigma)=g\circ f(b) = b-\delta h b$, so $b = \delta(g(\sigma)+h(b))$.  Since $\sigma$ and $b$ both have support inside $U_1$, the cochain $g(\sigma)+h(b)$ has support inside $U$.
\end{proof}

The following lemma is a rephrasing of Proposition \ref{prop: regularity}.

\begin{lemma}\label{lemma: regularity 2}
Let $\{\tilde{U}_i\}_{i\in\N}$ be a neighborhood basis of $z\in\overline{\mc{X}}$ such that $\mc{X}\setminus U_i$ is a subcomplex for each $i$.
Let $U_i=\tilde{U}_i\setminus\partial(G,\mc{P})$.
Then, for each $i$, there is a $j>i$ such that, if an element $[\phii]\in H_c^k(U_j)$ is sent to $0$ under $H_c^k(U_j)\rightarrow H_c^k(\mc{X})$, then it is sent to $0$ under $H_c^k(U_j)\rightarrow H_c^k(U_i)$
\end{lemma}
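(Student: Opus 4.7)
The plan is to deduce this lemma as a direct translation of Proposition \ref{prop: regularity}, with the point being that compactly supported cohomology of the open set $U_i$ is computed by cellular cochains on $\mc{X}$ whose support (necessarily a subcomplex, since $\mc{X}\setminus U_i$ is a subcomplex) lies in $U_i$. Under this identification the map $H_c^k(U_j)\to H_c^k(\mc{X})$ is induced by ``extension by zero'', i.e.\ by inclusion of cochain complexes $C_c^*(U_j)\hookrightarrow C_c^*(\mc{X})$, and similarly for $H_c^k(U_j)\to H_c^k(U_i)$.

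First I would choose $j$ as follows. Given $\tilde{U}_i$, apply Proposition \ref{prop: regularity} to the cochain complex $C_c^*(\mc{X})$ at the point $z$ and the neighborhood $\tilde{U}_i$ to obtain a smaller open neighborhood $\tilde{V}\subseteq \tilde{U}_i$ of $z$ with the property that every coboundary $c\in \delta C_c^{k-1}(\mc{X})$ with $\supp(c)\subseteq \tilde{V}$ can be written as $c=\delta d$ for some $d\in C_c^{k-1}(\mc{X})$ with $\supp(d)\subseteq \tilde{U}_i$. Since $\{\tilde{U}_m\}_{m\in \N}$ is a neighborhood basis of $z$, choose $j>i$ such that $\tilde{U}_j\subseteq \tilde{V}$.

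Now I would verify the conclusion. Let $[\phii]\in H_c^k(U_j)$ be sent to $0$ under $H_c^k(U_j)\to H_c^k(\mc{X})$. Choose a representing cocycle $\phii\in C_c^k(U_j)$, which via extension by zero we regard as an element of $C_c^k(\mc{X})$ with $\supp(\phii)\subseteq U_j\subseteq \tilde{U}_j\subseteq \tilde{V}$. By hypothesis $\phii = \delta\psi$ for some $\psi\in C_c^{k-1}(\mc{X})$, so $\phii$ is a coboundary in $\mc{X}$ whose support lies in $\tilde{V}$. Regularity then produces $d\in C_c^{k-1}(\mc{X})$ with $\delta d = \phii$ and $\supp(d)\subseteq \tilde{U}_i$. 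Since $d$ has compact support (it is in $C_c^{k-1}(\mc{X})$) and that support lies in $\tilde{U}_i\cap\mc{X}=U_i$, the cochain $d$ represents an element of $C_c^{k-1}(U_i)$ witnessing that the image of $[\phii]$ in $H_c^k(U_i)$ vanishes.

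The only real subtlety — and where I expect to have to be careful rather than where I foresee an obstacle — is the identification of $H_c^*(U_i)$ with the cohomology of the subcomplex of $C_c^*(\mc{X})$ consisting of cochains supported in $U_i$. This requires the assumption that $\mc{X}\setminus U_i$ is a subcomplex, which is precisely the hypothesis placed on the neighborhood basis, and ensures that extension by zero is a morphism of cochain complexes compatible with the maps appearing in the statement.
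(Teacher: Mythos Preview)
Your proposal is correct and is precisely the argument the paper intends: it identifies $H_c^*(U_j)$ with the cohomology of the kernel of $C_c^*(\mc{X})\to C_c^*(\mc{X}\setminus U_j)$ (equivalently, cochains supported in $U_j$, which makes sense because the complement is a subcomplex) and then applies Proposition~\ref{prop: regularity}. You have simply spelled out the details that the paper's one-line proof elides.
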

\begin{proof}
Identify $H_c^*(U_j)$ with the cohomology of the kernel of $C_c^*(\mc{X})\rightarrow C_c^*(\mc{X}\setminus U_i)$ and apply Proposition \ref{prop: regularity}.
\end{proof}

  The following completes the proof of Theorem \ref{thm:pdpair}.
\begin{theorem}\label{thm:homologymanifold}
  If $(G,\mc{P})$ is a type $F$ relatively hyperbolic \PD{n} pair, then $\partial(G,\mc{P})$ is a homology $(n-1)$--manifold.
\end{theorem}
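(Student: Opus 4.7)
The plan is to verify, for each $z \in \partial(G,\mc{P})$, that the local cohomology groups $\check{H}^k(\partial(G,\mc{P}), \partial(G,\mc{P}) \setminus \{z\})$ vanish except in degree $n-1$, where they equal $\Z$. Combined with the fact that $\partial(G,\mc{P})$ is a \v{C}ech cohomology $(n-1)$--sphere (immediate from Theorem \ref{thm:maintheorem} and the \PD{n} hypothesis), this gives the homology manifold condition at every point. The approach follows Bestvina's argument for \cite[Theorem 2.8]{B96}, with the regularity established in Proposition \ref{prop: regularity} (and restated in Lemma \ref{lemma: regularity 2}) substituting for the cocompactness he uses.

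First I would fix $z$ and choose a neighborhood basis $\{\tilde{U}_i\}_{i\in \N}$ of $z$ in $\overline{\mc{X}}$ so that each $\mc{X} \setminus U_i$ is a subcomplex, where $U_i = \tilde{U}_i \cap \mc{X}$. Writing $U_{i,\infty} = \tilde{U}_i \cap \partial(G,\mc{P})$, I would apply the long exact sequence of Proposition \ref{prop:boundaryiso} locally to the pair $(\tilde{U}_i, U_{i,\infty})$. Since $\overline{\mc{X}}$ is an ANR (Theorem \ref{thm:zset}), for sufficiently small $\tilde{U}_i$ one can arrange that $\check{H}^\ast(\tilde{U}_i)$ vanishes in positive degrees, yielding isomorphisms $\check{H}^{k-1}(U_{i,\infty}) \cong H_c^k(U_i)$. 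A parallel argument applied after removing the singleton $Z$--set $\{z\}$ gives analogous isomorphisms for $\check{H}^{k-1}(U_{i,\infty} \setminus \{z\})$.

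Next I would feed these isomorphisms into the long exact sequence of the pair $(U_{i,\infty}, U_{i,\infty}\setminus\{z\})$ and take the direct limit over $i$. This reduces the computation of $\check{H}^k(\partial(G,\mc{P}), \partial(G,\mc{P})\setminus\{z\})$ to a direct limit of compactly supported cohomology groups attached to nested subcomplexes of $\mc{X}$ approaching $z$. By Proposition \ref{prop:topiso} and the \PD{n} hypothesis, $H_c^k(\mc{X}) = \Z$ for $k = n$ and vanishes otherwise. Lemma \ref{lemma: regularity 2} then ensures that any class in $H_c^k(U_j)$ which dies globally in $H_c^k(\mc{X})$ already dies in $H_c^k(U_i)$ for some $i < j$, so that the direct limit collapses onto a single $\Z$ in degree $n-1$, with all other direct limits vanishing.

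The hard part will be setting up the local long exact sequences carefully, especially the version after removing $\{z\}$: here $\overline{\mc{X}} \setminus \{z\}$ is no longer compact, so one must work inside $\tilde{U}_i \setminus \{z\}$ and verify the required cohomology vanishing persists using that $\{z\}$ is itself a $Z$--set. The regularity of the cochain complex plays the essential role that cocompactness plays in Bestvina's original argument; verifying that it truly suffices in the non-cocompact setting of a relatively hyperbolic \PD{n} pair, and that the support controls track correctly through the successive long exact sequences and direct limits, is the central technical task.
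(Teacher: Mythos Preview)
Your overall strategy---use regularity to pin down the local behavior of $\{H_c^k(U_i)\}$ and thereby compute local invariants of the boundary at $z$---matches the paper's, but the bridge you propose between $\check{H}^{k-1}(U_{i,\infty})$ and $H_c^k(U_i)$ does not work as stated. Proposition \ref{prop:boundaryiso} relies on $\overline{\mc{X}}$ being \emph{compact}: only then does the relative term in the long exact sequence of the pair $(\overline{\mc{X}},\partial(G,\mc{P}))$ become compactly supported cohomology of the complement. Your neighborhoods $\tilde{U}_i$ are open, hence non-compact, so the long exact sequence of $(\tilde{U}_i, U_{i,\infty})$ in \v{C}ech (i.e.\ closed-support sheaf) cohomology does \emph{not} produce $H_c^k(U_i)$; and if you instead run the sequence with compact supports you get $H_c^k(\tilde{U}_i)$ and $H_c^k(U_{i,\infty})$, neither of which is the \v{C}ech cohomology you want. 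The ``parallel argument after removing $\{z\}$'' inherits the same problem. A related issue is that you are computing local \v{C}ech \emph{cohomology} of $(Z,Z\setminus\{z\})$, whereas the homology manifold condition is about local (Steenrod) \emph{homology}; the passage between the two needs its own justification.

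The paper avoids this by not attempting a local version of Proposition \ref{prop:boundaryiso} at all. Instead it quotes two facts valid for any $Z$--set $Z\subset \tilde X$ with $\tilde X$ an AR (\cite[Proposition 1.8 and Remark 1.9]{B96}): first, $H_k(Z,Z\setminus\{z\})=\varinjlim H_{k+1}^{\mathrm{LF}}(U_i)$ in Steenrod homology; second, a universal coefficients short exact sequence expressing each $H_{k+1}^{\mathrm{LF}}(U_i)$ in terms of $\op{Hom}$ and $\op{Ext}$ of $H_c^*(U_i)$. Taking the direct limit of these sequences, one only needs the \emph{pro-isomorphism type} of the inverse system $\{H_c^k(U_i)\}$, and Lemma \ref{lemma: regularity 2} (regularity) is exactly what shows this system is pro-isomorphic to the constant system $\{H_c^k(\mc{X})\}$. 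So regularity plays the role you anticipate, but the homological-algebraic packaging through $H^{\mathrm{LF}}$ and universal coefficients is what makes the link to $H_c^*(U_i)$ legitimate in the non-compact local setting.
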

\begin{proof}
  Once we have the regularity of the compactly supported cochains, the proof follows exactly as Bestvina's proof that the boundary of a hyperbolic \PD{n} group is a homology $(n-1)$--sphere (see \cite[2.8]{B96}).  For completeness we give the argument, filling in a few details.
  
  To align notation with Bestvina's, write $Z = \partial(G,\mc{P})$, $X=\mc{X}$, and $\tilde{X} = \overline{X} = X\cup Z$. In this proof, $H_k(-)$ will denote Steenrod homology and $H_k^{LF}(-)$ will denote locally finite homology.  For an exposition of these homology theories see \cite{Ferry}.
  
  We aim to show that, for any point $z\in Z$,
  \begin{equation*}
   H_k (Z,Z\setminus\{z\})\cong
    \begin{cases}
      \Z & k=n-1\\
      0 & \mbox{otherwise}
    \end{cases}  
  \end{equation*}
  We have shown (Theorem \ref{thm:zset}) that $Z$ is a $Z$-set in $\tilde{X}$, which is an absolute retract.
  In this setting, we have the following two facts, special cases of \cite[Proposition 1.8 and Remark 1.9]{B96}:
  \begin{itemize}
    \item If $\{\tilde{U_i}\}_{i\in \N}$ is a neighborhood basis in $\tilde{X}$  of $z\in Z$, then \[H_k(Z,Z\setminus\{z\})= \varinjlim H_{k+1}^{\mathrm{LF}}(U_i).\]
    \item (Universal coefficients.)  For each $U_i$ there is a short exact sequence
      \begin{equation}\label{uct} 0 \to \op{Ext}(H_c^{k+2}(U_i);\Z)\to H_{k+1}^{\mathrm{LF}}(U_i) \to \op{Hom}(H_c^{k+1}(U_i);\Z)\to 0.
      \end{equation}
    \end{itemize}

    Since a direct limit of exact sequences is exact, we can use a direct limit of the short exact sequences \eqref{uct} to compute $H_k(Z,Z\setminus\{z\})$.  
    But the limits $\varinjlim\op{Hom}(-,\Z)$ and $\varinjlim\op{Ext}(-,\Z)$ depend only on the inverse system up to pro-isomorphism (see \cite[Chapter 11]{Geoghegan}).
    Indeed, if $F$ is a contravariant functor, it sends pro-isomorphic systems to ind-isomorphic systems and colimits of ind-isomorphic systems are isomorphic.
  It is therefore enough to prove that the inverse system $\{H_c^k(U_i)\}_{i\in\N}$ is pro-trivial when $k\neq n$, and pro-isomorphic to $\{\Z\}$ when $k = n$.  In other words, we want to show the inverse system $\{H_c^k(U_i)\}_{i\in\N}$ is pro-isomorphic to the inverse system consisting of a single group, $\{H_c^k(\mc{X})\}$.
  
To show that the two systems are pro-isomorphic, we give maps $p_i:H_c^k(\mc{X})\rightarrow H_c^k(U_i)$ and $q:H_c^k(U_0)\rightarrow H_c^k(\mc{X})$.
When $H_c^k(\mc{X})$ is trivial, then these are zero.
In general we take $q$ to be the map induced by the inclusion $U_0\subseteq\mc{X}$.
Let $\alpha:\N\rightarrow\N$ be the assignment $i\mapsto j$ of Lemma \ref{lemma: regularity 2}.
For $k=n$, we have $H_c^n(\mc{X})\cong\Z$.
A cochain representing the generator can be translated into each $U_i$ by the action of $G$.
Thus, the restriction $H_c^n(\mc{X})\rightarrow H_c^n(\mc{X}\setminus U_i)$ is zero and, using the long exact sequence from Theorem \ref{thm: LES cohom with compact support}, $H_c^n(U_i)\rightarrow H_c^n(\mc{X})$ is surjective.
Since $H_c^n(\mc{X})$ is free, this surjection admits a section, which we will denote $s_i$.
Let $p_i$ be the composite $H_c^n(\mc{X})\xrightarrow{s_{\alpha(i)}}H_c^n(U_{\alpha(i)})\rightarrow H_c^n(U_i)$.

We must check that the $p_i$ commute with the maps in the system $\{H_c^n(U_i)\}$.
Consider the following triangle.
\[
\begin{tikzcd}
H_c^n(\mc{X})\arrow{d}{p_j}\arrow{rd}{p_i}&\\
H_c^n(U_j)\arrow{r}{\iota_*}&H_c^n(U_i)
\end{tikzcd}
\]
Let $\phii$ represent a generator of $H_c^n(\mc{X})$.
Then, $\iota_*\circ p_j[\phii]-p_i[\phii]$ is represented by a cochain supported in $U_{\alpha(i)}$.
Moreover, this cochain is a coboundary in $\mc{X}$ so, by definition of $\alpha$, $\iota_*\circ p_j[\phii]-p_i[\phii]=0$.

That these maps give a pro-isomorphism can be checked using Lemma \ref{lemma: regularity 2}.

  To conclude, $Z$ is $(n-1)$--dimensional (Corollary \ref{cor:top dim}) and has the local homology of an $(n-1)$--manifold at every point, so it is a homology $(n-1)$--manifold.
\end{proof}

\section{Topological dimension of the boundary.}\label{sec:dimension}
In \cite{BM91} it is established that for a hyperbolic group $G$, the topological dimension of $\partial G$ is exactly one less than $\max\{n\mid H^n(G;\Z G)\neq 0\}$.  We extend this to the relative setting in a special case:
\begin{theorem}\label{topdimthm}
  Let $(G,\mc{P})$ be type $F$ and relatively hyperbolic.  Suppose further that $\op{cd}(G)<\op{cd}(G,\mathcal{P})$.
Then
\[
\dim(\partial(G,\mathcal{P}))=\op{cd}(G,\mathcal{P})-1
\]
\end{theorem}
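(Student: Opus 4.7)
I would prove the theorem by establishing matching upper and lower bounds on $\dim\partial(G,\mc{P})$, in the spirit of Bestvina--Mess's argument in \cite{BM91} but adapted to the relative setting using the $Z$-set compactification from Section \ref{sec:N-connected}.

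For the lower bound, set $n=\op{cd}(G,\mc{P})$. Since $(G,\mc{P})$ is type $F$ (hence type $FP$), Proposition \ref{prop:kap} gives a coefficient module, namely $\Z G$, for which $H^n(G,\mc{P};\Z G)\neq 0$. Theorem \ref{thm:maintheorem} then transports this to $\check{H}^{n-1}(\partial(G,\mc{P});\Z)\neq 0$. By Lemma \ref{lem:dahmanilemma}, $\partial(G,\mc{P})$ is a compact metrizable space of finite covering dimension, and for such spaces the covering dimension agrees with the Čech cohomological dimension; so the nonvanishing of $\check{H}^{n-1}$ forces $\dim\partial(G,\mc{P})\geq n-1$.

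For the upper bound, I would first exploit the hypothesis $\op{cd}(G)<n$ in the long exact sequence of Proposition \ref{prop:BEles}. The relevant stretch is
\[H^{n-1}(G;M)\to H^{n-1}(\mc{P};M)\to H^n(G,\mc{P};M)\to H^n(G;M)=0,\]
showing that $H^{n-1}(\mc{P};M)$ surjects onto $H^n(G,\mc{P};M)$ and hence $\op{cd}(\mc{P})\geq n-1$. Since type $F$ forces $G$ to be torsion-free and finite-dimensional, we also have $\op{cd}(P)\leq \op{cd}(G)<n$ for each $P\in\mc{P}$, and combining these yields $\op{cd}(\mc{P})=n-1$ as well as $\op{cd}(G)=n-1$. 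Using Eilenberg--Ganea (with free groups and surface pairs handled directly when cohomological dimensions drop to $1$ or $2$), together with type $F$, I can arrange in Definition \ref{def:curlyX} that each $B_G$ and $B_P$ is a \emph{finite} classifying space of optimal dimension $\op{cd}(G)=n-1$ and $\op{cd}(P)\leq n-1$ respectively. Taking $N$ large enough that $\mc{X}:=\mc{X}(N)$ is contractible, the resulting cusped space has dimension
\[\max\bigl(\dim B_G,\;\max_{P\in\mc{P}}\dim B_P+1\bigr)\;\leq\;n.\]
Theorem \ref{thm:zset} tells us $\partial(G,\mc{P})$ is a $Z$-set in $\overline{\mc{X}}$, so \cite[Proposition 2.6]{BM91}, applied exactly as in the proof of Corollary \ref{cor:top dim}, gives $\dim\partial(G,\mc{P})<\dim\mc{X}\leq n$, i.e.\ $\dim\partial(G,\mc{P})\leq n-1$.

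The main technical obstacle is the low-dimensional bookkeeping in constructing $\mc{X}$ with $\dim\mc{X}\leq n$. Eilenberg--Ganea produces a classifying space of optimal dimension only once $\op{cd}\geq 3$, and the remaining cases (free groups, surface groups, low-dimensional PD pairs via Theorem \ref{thm:pd2}) must be dispatched individually. The hypothesis $\op{cd}(G)<\op{cd}(G,\mc{P})$ is used in an essential way to ensure $\op{cd}(\mc{P})+1=n$, so that the horoball contribution to $\dim\mc{X}$ is comparable to $\op{cd}(G,\mc{P})$; without this hypothesis one could have some $\op{cd}(P)=\op{cd}(G)=n$, and the natural construction would yield $\dim\mc{X}=n+1$, losing control of the upper bound. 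This is precisely the difficulty underlying Conjecture \ref{conj:dimension}.
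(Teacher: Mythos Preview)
Your lower bound argument is fine, and your upper bound argument works cleanly when $\op{cd}(G)\geq 3$; indeed the paper remarks that in that range one can argue exactly as you do, via Eilenberg--Ganea.  The gap is the case $\op{cd}(G)=2$, i.e.\ $n=3$.  You correctly observe that some $P\in\mc{P}$ must have $\op{cd}(P)=n-1=2$, and to get $\dim\mc{X}\leq n$ you need $\dim B_P\leq 2$ for that $P$.  But this is precisely the Eilenberg--Ganea conjecture for $P$, and your appeal to ``surface pairs'' and Theorem~\ref{thm:pd2} does not help: the peripheral groups are arbitrary type~$F$ groups with $\op{cd}=2$, not \PD{2} groups or surface groups.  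Without resolving this, the best you can guarantee is $\dim B_P\leq 3$, hence $\dim\mc{X}\leq 4$, and the $Z$--set inequality only gives $\dim\partial(G,\mc{P})\leq 3$ rather than the needed $\leq 2$.

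The paper's proof avoids Eilenberg--Ganea altogether by taking a different route to the upper bound.  Rather than forcing $\dim\mc{X}\leq n$, it allows $\mc{X}$ to have arbitrary dimension $N$ and instead shows that the \emph{truncated} compactly supported cochain complex
\[
C_c^n(\mc{X})\to C_c^{n+1}(\mc{X})\to\cdots\to C_c^N(\mc{X})\to 0
\]
is regular in the sense of Definition~\ref{def:regular}.  The key input here is only that $\op{cd}(G)<n$ and $\op{cd}(P)<n$, which makes the tails of the cochain complexes of the cocompact part and of each horoball level acyclic; one then builds a contracting homotopy of bounded displacement on the tail of $C_c^*(\mc{X})$ by decomposing it according to depth.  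The conclusion $\dim\partial(G,\mc{P})=n-1$ then follows from a lemma of Bestvina--Mess (stated as Lemma~\ref{lem: dim of boundary/reg cochains}) which extracts the dimension from regularity of this tail.  This is the genuine content you are missing: it trades a geometric dimension bound on $\mc{X}$ for a homological acyclicity statement above degree $n$, and the latter needs no Eilenberg--Ganea.
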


\begin{remark}
Under the hypotheses of Theorem \ref{topdimthm}, it must be the case that \[\op{cd}(G)=\max_{P\in\mc{P}}\op{cd}(P)=\op{cd}(G,\mc{P})-1.\]
Moreover, if $\op{cd}(G)>2$, we may apply the Eilenberg-Ganea Theorem as in the previous section to obtain Theorem \ref{topdimthm}.
The proof we give in this section follows \cite{BM91} and applies also to the case $\op{cd}(G)=2$.
\end{remark}

As in the previous section, we suppress integer coefficients. In the proof of \cite[Corollary 1.4]{BM91}, Bestvina and Mess prove the following.

\begin{lemma}\label{lem: dim of boundary/reg cochains}
Suppose $Z$ is a $Z$-set in $\bar{X}$ such that $X=\bar{X}\setminus Z$ is a locally finite CW complex of dimension $N$.
Let $n-1=\max\{k\in\Z_{\geq 0}\mid\check{H}^k(Z)\neq0\}$ and let $C_c^k(X)$ denote the cellular $k$-cochains with compact support.
If the cochain complex
\[
C_c^n(X)\rightarrow\cdots\rightarrow C_c^N(X)\rightarrow0
\]
is regular, then $\dim(Z)=n-1$.
\end{lemma}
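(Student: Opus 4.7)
My plan is to establish $\dim(Z)=n-1$ by proving matching lower and upper bounds. The lower bound $\dim(Z)\geq n-1$ is the easy half: it follows from the hypothesis $\check{H}^{n-1}(Z)\neq0$ together with the standard fact that \v{C}ech cohomology of a compact metrizable space vanishes above its covering dimension. Here $Z$ is compact metrizable because it is a $Z$-set in a compactification of the locally finite CW complex $X$, and $Z$ has finite dimension (in fact $\dim(Z)\leq N-1$) by \cite[Proposition 2.6]{BM91}.

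For the upper bound $\dim(Z)\leq n-1$, I would appeal to the Alexandroff characterization for finite-dimensional compact metric spaces: $\dim(Z)\leq n-1$ exactly when $\check{H}^k(Z,A;\Z)=0$ for every closed $A\subseteq Z$ and every $k\geq n$. By \v{C}ech continuity this reduces to showing the direct limit $\varinjlim_{W}\check{H}^k(W\cap Z)$ vanishes, where $W$ ranges over open neighborhoods of $A$ in $\bar{X}$. My next step is to transfer this to a statement about compactly supported cohomology of $X$ using the $Z$-set property: for an open $W\subseteq\bar{X}$, a long exact sequence analogous to the one in Proposition \ref{prop:boundaryiso} relates $\check{H}^{*}(W\cap Z)$ with $H_c^{*}(W\setminus Z)$ and $H_c^{*}(W)$. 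Since $\check{H}^k(Z)=0$ for $k\geq n$ by hypothesis, the same isomorphism applied to $W=\bar{X}$ forces $H_c^{k+1}(X)$ to behave as though cocycles in the appropriate degree range must bound globally.

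The regularity hypothesis does the real work at this stage. By definition, regularity of $C_c^n(X)\to\cdots\to C_c^N(X)\to 0$ says that any compactly supported cocycle of degree $n\leq k\leq N$ whose support lies in a prescribed neighborhood $V$ of a point $z\in Z$, and which bounds globally in $X$, already bounds in a slightly larger prescribed neighborhood $U$. Exactly as in the pro-isomorphism argument of Theorem \ref{thm:homologymanifold}, this lets one show that the inverse system $\{H_c^k(W\setminus Z)\}$, indexed over neighborhoods $W$ of $A$, is pro-trivial for $n\leq k\leq N$; degrees $k>N$ are automatic since $\dim(X)=N$. Translating back through the $Z$-set long exact sequence yields $\varinjlim_{W}\check{H}^k(W\cap Z)=0$ for $k\geq n$, giving $\check{H}^k(Z,A)=0$ and completing the upper bound.

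The main obstacle I expect is making the $Z$-set-based correspondence between \v{C}ech cohomology of subsets of $Z$ and compactly supported cohomology of subsets of $X$ fully rigorous for an arbitrary closed $A\subseteq Z$, rather than for $A=\emptyset$ as in Proposition \ref{prop:boundaryiso}. The difficulty is that $A$ need not admit neighborhoods in $\bar{X}$ that respect the CW structure on $X$, so one has to work with pro-systems of open neighborhoods and verify pro-isomorphism in the style of Lemma \ref{lemma: regularity 2} and the proof of Theorem \ref{thm:homologymanifold}. Once the neighborhood bases are chosen compatibly with the cellular structure, the regularity hypothesis supplies exactly the pro-triviality needed in the required degrees, essentially reproducing the argument used for \cite[Corollary 1.4]{BM91} in this more general setting.
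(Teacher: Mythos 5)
There is a genuine gap, concentrated at the critical degree $k=n$, and it comes from conflating relative with absolute \v{C}ech cohomology. You correctly invoke the Alexandroff characterization: for a finite-dimensional compactum, $\dim(Z)\le n-1$ iff $\check{H}^k(Z,A)=0$ for all closed $A\subseteq Z$ and all $k\ge n$. But you then assert that ``by \v{C}ech continuity this reduces to showing $\varinjlim_W\check{H}^k(W\cap Z)$ vanishes.'' Continuity identifies that limit with the \emph{absolute} group $\check{H}^k(A)$, not with $\check{H}^k(Z,A)$. These differ, and the difference is exactly where the content of the lemma lies: the long exact sequence of the pair, together with the hypothesis $\check{H}^k(Z)=0$ for $k\ge n$, gives $\check{H}^k(Z,A)\cong\check{H}^{k-1}(A)$ for $k\ge n+1$, but in the bottom degree it gives
\[
\check{H}^n(Z,A)\;\cong\;\check{H}^{n-1}(A)\big/\mathrm{im}\bigl(\check{H}^{n-1}(Z)\to\check{H}^{n-1}(A)\bigr),
\]
and $\check{H}^{n-1}(A)$ has no reason to vanish (indeed for $A=Z$ it is $\check{H}^{n-1}(Z)\ne 0$). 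So even if you did establish $\check{H}^k(A)=0$ for all closed $A$ and all $k\ge n$, you would still be missing the surjectivity of $\check{H}^{n-1}(Z)\to\check{H}^{n-1}(A)$, which is the real thing the regularity hypothesis must deliver.

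The same issue resurfaces in your pro-system claim. You assert that $\{H_c^k(W\setminus Z)\}_W$ is pro-trivial for $n\le k\le N$, invoking the pro-isomorphism argument from Theorem \ref{thm:homologymanifold}. But that argument shows the system is pro-isomorphic to the constant system $\{H_c^k(\mc{X})\}$ (not pro-trivial), and in degree $k=n$ that group is $\check{H}^{n-1}(Z)\neq 0$, so pro-triviality fails precisely where you need it. Moreover, the pro-isomorphism in that proof depends on the $G$-action to translate a cocycle representing the generator of $H_c^n(\mc{X})$ into arbitrarily small neighborhoods of a boundary point; the present lemma has no group action, so that step must be replaced by something else (presumably this is part of what regularity supplies in the Bestvina--Mess argument, but you have not supplied it). Finally, to run the vanishing/pro-triviality argument even for $k\ge n+1$ you need to know that compactly supported $k$-cocycles on $X$ bound globally, i.e.\ $H_c^k(X)=0$; this follows from $\check{H}^{k-1}(Z)=0$ \emph{only} after identifying $H_c^k(X)\cong\check{H}^{k-1}(Z)$, which requires $X$ (hence $\bar X$) contractible --- a hypothesis you are implicitly using throughout but which is not in the lemma statement and should be flagged. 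In short: the toolbox is right (Alexandroff, the $Z$-set long exact sequence, regularity, pro-systems in the spirit of Lemma \ref{lemma: regularity 2} and Theorem \ref{thm:homologymanifold}), but the reduction to absolute cohomology of neighborhoods loses exactly the degree-$n$ obstruction, and the pro-triviality claim that is supposed to carry the argument is false in that degree.
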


\begin{proof}[Proof of Theorem \ref{topdimthm}]
Let $n=\op{cd}(G,\mathcal{P})$ and let $N=\dim(\mathcal{X})$.
Note that $\op{cd}(P)<n$ for all $P\in\mathcal{P}$.
Let $C_c^k(\mc{X})$ denote the cellular cochains of $\mc{X}$ with compact support.  Each horoball of $\mc{X}$ is a product $[0,\infty)\times E_P$ for some simplicial complex $E_P$.  We emphasize that we are using the product \emph{cellulation} of the horoballs, and not, as before, the product simplicial structure.  Namely, each cell in a horoball is a product of a cell of $E_P$ with a cell of $[0,\infty)$.

We verify the hypotheses of Lemma \ref{lem: dim of boundary/reg cochains}, showing that the truncated complex of compactly supported cochains 
\[ 
C_c^n(\mc{X})\rightarrow\cdots\rightarrow C_c^N(\mc{X})\rightarrow0
\] is regular.

We must show, for each $k>n$, there is an $M>0$ such that, if $\phii\in C_c^k(\mathcal{X})$ is a coboundary, then it is the coboundary of some $\psi\in C_c^{k-1}(\mathcal{X})$ supported in a cellular $M$--neighborhood of $\phii$.

Let $X=\mc{X}_{\le1}$ and let $Y=\mc{X}_1$.
The compactly supported $k$--cochains of $\mathcal{X}$ admit the following decomposition
\[
C_c^k(\mathcal{X})\cong C_c^k(X)\oplus\left(\bigoplus_{m\in\N}C_c^k(Y)\right)\oplus\left(\bigoplus_{m\in\N}C_c^{k-1}(Y)\right).
\]
Letting $e$ be a cell in either $X$ or $Y$ and $e^*$ the cochain that sends $e$ to $1$, $\delta:C_c^k(\mathcal{X})\rightarrow C_c^{k-1}(\mathcal{X})$ is defined by
\begin{align*}
\delta(e^*,0,0)&=\begin{cases}
(\delta e^*,0,-e^*_1)&e\in Y\\
(\delta e^*,0,0)&e\notin Y
\end{cases}\\
\delta(0,e^*_m,0)&=(0,(\delta e^*)_m,e^*_m-e^*_{m+1})\\\delta(0,0,e^*_m)&=(0,0,-\delta e^*_m)
\end{align*}
where $e^*_m$ denotes the cocycle $e^*$ in the $m$-th summand of $\bigoplus_{m\in\N} C_c^k(Y)$.
By our assumption that $\op{cd}(G)<n$ and $\op{cd}(P)<n$ for all $P\in \mc{P}$, there are acyclic free $\Z G$--complexes (free resolutions of $0$)
\begin{align*}
...\rightarrow F_{N-n+3}\rightarrow F_{N-n+2}\rightarrow C_c^{n-1}(X)\rightarrow C_c^{n}(X)\rightarrow...\rightarrow C_c^N(X)\rightarrow0\\
...\rightarrow F_{N-n+3}'\rightarrow F_{N-n+2}'\rightarrow C_c^{n-1}(Y)\rightarrow C_c^n(Y)\rightarrow...\rightarrow C_c^N(Y)\rightarrow0
\end{align*}
For these resolutions, there are chain homotopies $h_X$ and $h_Y$ between the identity and $0$ (i.e. $h\delta+\delta h=\op{Id}$).
Since each $C_c^k(X)$ and $C_c^k(Y)$ appearing in these resolutions is finitely generated, there is an $M$ so that $h_X$ and $h_Y$ have displacement bounded by $M$.

For $k>n$, define $:C_c^k(\mathcal{X})\rightarrow C_c^{k-1}(\mathcal{X})$ by $H(\phii,\phii',\phii'')=(h_X\phii,h_Y\phii',-h_Y\phii'')$.
A calculation shows that $H\delta+\delta H=\op{Id}$.
Now, if $\phii$ is a cocycle in $C_c^k(\mathcal{X})$ and $k>n$, then $\delta H\phii=\phii$.
The claim follows from the fact that $d_H(\op{supp}(H\psi),\op{supp}(\psi))<N$.

With this claim, the result follows from Lemma \ref{lem: dim of boundary/reg cochains}.
\end{proof}

Under the assumptions of Theorem \ref{topdimthm}, the group $G$ is type $F$ and thus $\op{cd}(G,\mc{P})$ is equal to  $\max\{n\mid H^n(G,\mc{P};\Z G)\neq 0\}$ (see Proposition \ref{prop:kap}).  
\begin{conjecture}\label{conj:dimension}
  Let $(G,\mc{P})$ be relatively hyperbolic and type $F$.  Then
  \[ \dim(\partial(G,\mc{P})) = \max\{n\mid H^{n}(G,\mc{P};\Z G)\neq 0\}-1.\]
\end{conjecture}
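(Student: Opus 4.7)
The plan is to mimic the strategy of Theorem \ref{topdimthm}, removing the hypothesis $\op{cd}(G)<\op{cd}(G,\mc{P})$. Let $n=\max\{k\mid H^k(G,\mc{P};\Z G)\ne 0\}$, which equals $\op{cd}(G,\mc{P})$ by Proposition \ref{prop:kap}. The lower bound $\dim(\partial(G,\mc{P}))\ge n-1$ is immediate: by Theorem \ref{thm:maintheorem}, $\check{H}^{n-1}(\partial(G,\mc{P});\Z)\cong H^n(G,\mc{P};\Z G)\ne 0$, and nonvanishing $(n-1)$-st Čech cohomology forces topological dimension at least $n-1$. Since $\check{H}^k(\partial(G,\mc{P});\Z)=0$ for $k\ge n$, the degree $n-1$ is also the top nonvanishing Čech cohomological degree of the boundary. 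The upper bound $\dim(\partial(G,\mc{P}))\le n-1$ would then follow, via Lemma \ref{lem: dim of boundary/reg cochains} applied to the cusped space $\mc{X}$ from Section \ref{sec:pdpair}, provided the truncated compactly supported cochain complex $C_c^n(\mc{X})\to\cdots\to C_c^N(\mc{X})\to 0$ is regular.

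To attack regularity of the truncated complex, I would use the decomposition
\[C_c^k(\mc{X})\cong C_c^k(X_0)\oplus\bigoplus_{m\in\N} C_c^k(Y_0)\oplus\bigoplus_{m\in\N} C_c^{k-1}(Y_0),\]
with $X_0=\mc{X}_{\le 1}$ and $Y_0=\mc{X}_1$ as in Theorem \ref{topdimthm}, and try to assemble a bounded-displacement chain contraction of the truncated complex of $\mc{X}$ from bounded-displacement chain contractions of truncated complexes on $X_0$ and $Y_0$. When $\op{cd}(G)<n$ and $\op{cd}(P)<n$ for every $P\in\mc{P}$, the relevant truncated complexes on $X_0$ and $Y_0$ are finite-type acyclic, and bounded-displacement contractions exist by the argument of Theorem \ref{topdimthm}. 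The case not covered by that theorem is precisely when $\op{cd}(G)=n$, since then $H_c^n(X_0)\cong H^n(G;\Z G)$ can be nonzero and the truncated complex on $X_0$ is no longer acyclic in its lowest degree.

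To handle this case, I would first try to extend Proposition \ref{prop: regularity} to show that the \emph{full} compactly supported cochain complex of $\mc{X}$ is regular, without the \PD{n} assumption. If true, regularity of the truncated complex follows immediately: a coboundary $\phi=\delta\psi$ in the truncated range with $\op{supp}\phi$ in some small neighborhood $V$ of $z\in\partial(G,\mc{P})$ would have a primitive $\psi'$ of controlled support supplied by the full regularity, and since $k-1\ge n\ge 0$ the cochain $\psi'$ lies within the truncated range.

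The main obstacle is this extension of Proposition \ref{prop: regularity}. Its proof relies crucially on \PD{n} duality via the chain maps $f\co C_c^{n-i}(\mc{X})\to C_i(\mc{X})$ and $g\co C_i(\mc{X})\to C_c^{n-i}(\mc{X})$, which rest on $H_c^*(\mc{X})$ being concentrated in a single degree and of rank one. A substitute is needed in general. A natural approach is to work in the Rips-complex model from Section \ref{sec:N-connected}, combining Proposition \ref{prop:condition iii for Rips complex} with the depth function on $\mc{X}$, to show that any coboundary supported near $z\in\partial(G,\mc{P})$ admits a primitive differing from any given one by a chain-level deformation that stays in a neighborhood of $z$. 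Making this deformation simultaneously quantitative, compactly supported, and uniform in $G$-orbits, so that the resulting primitives have controlled support independent of the boundary point $z$, is the essential technical difficulty I expect to encounter.
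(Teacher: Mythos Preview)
The statement you are attempting to prove is stated in the paper as a \emph{conjecture}, not a theorem; the authors give no proof and explicitly leave it open. So there is no ``paper's own proof'' to compare against.

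Your proposal is not a proof but a research outline, and you yourself identify the gap. The lower bound $\dim(\partial(G,\mc{P}))\ge n-1$ is indeed immediate from Theorem~\ref{thm:maintheorem}, and your reduction of the upper bound to regularity of the truncated cochain complex via Lemma~\ref{lem: dim of boundary/reg cochains} is sound (you also correctly observe that regularity of the full complex $C_c^*(\mc{X})$ would imply regularity of the truncation, since primitives of coboundaries in degree $k\ge n+1$ live in degree $k-1\ge n$). However, the step ``extend Proposition~\ref{prop: regularity} to show that the full compactly supported cochain complex of $\mc{X}$ is regular, without the \PD{n} assumption'' is precisely the content of the conjecture, and you have not carried it out. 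The bounded-displacement chain maps $f,g,h$ in Proposition~\ref{prop: regularity} genuinely depend on $H_c^*(\mc{X})$ being concentrated in a single degree with rank one (this is what allows Lemma~\ref{lem: lifting maps of bounded displacement} to be applied inductively), and your suggested substitute via the Rips complex and Proposition~\ref{prop:condition iii for Rips complex} controls homotopies of \emph{maps from spheres}, not primitives of compactly supported \emph{cochains}; there is no evident mechanism to convert one into the other with bounded displacement. Until that mechanism is supplied, this remains an outline of what one would like to be true rather than a proof.

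A minor point: the cusped space $\mc{X}$ and the $Z$--set property you need come from Section~\ref{sec:N-connected} and Theorem~\ref{thm:zset}, which require only type $F$; Section~\ref{sec:pdpair} additionally assumes \PD{n}, so you should not cite it for the construction of $\mc{X}$.
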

We are not sure if the equality should hold without the type $F$ assumption.

\appendix
\section{Cohomology with compact support}

The purpose of this appendix is to establish the long exact sequence for cohomology with compact support.
This long exact sequence is well documented in texts on sheaf cohomology but we will need singular and simplicial statements to apply this to group cohomology.
Moreover, the definition of singular cohomology with compact support in \cite{Spanier} and \cite{Hatcher} differ slightly; the definition in \cite{Spanier} is more compatible with sheaf cohomology while the definition in \cite{Hatcher} is more compatible with simplicial and cellular homology.
One of our goals is to show that the definition in \cite{Hatcher} is well behaved with respect to sheaf cohomology.
The main result of this section is Theorem \ref{thm: LES cohom with compact support}.

Fix an abelian group. In this section, we take coefficients in this abelian group or in the constant sheaf determined by this group.

\begin{definition}\label{def: simplicial cohom with compact support}
Suppose $X$ is a locally finite simplicial complex (or more generally, a locally finite $\Delta$-complex).
Then we define the \emph{simplicial $i$-cochains with compact support} to be the cochains $\phii$ such that $\phii(\sigma)=0$ for all but finitely many $i$--simplices $\sigma$.
We denote these cochains by $\tensor*[_\Delta]{C}{_c^i}(X)$.
This gives a cochain complex and we define the \emph{simplicial cohomology with compact support} $\tensor*[_\Delta]{H}{_c^i}(X)$ to be the cohomology of this complex.
\end{definition}

\begin{definition}\label{def: singular cohom with compact support}
If $X$ is a locally compact space, we define the \emph{singular $i$-cochains with compact support} to be the cochains $\phii$ such that there is a compact subset $K\subseteq X$ such that $\phii(\sigma)=0$ for all singular simplices $\sigma$ with image in $X\setminus K$.
We will denote these as $\tensor*[_\sigma]{C}{_c^i}(X)$.
This gives a cochain complex and we define the \emph{singular cohomology with compact support} $\tensor*[_\sigma]{H}{_c^i}(X)$ to be the cohomology of this complex.
\end{definition}

\begin{remark}
In the case that $X$ is a locally finite simplicial complex, singular cohomology with compact support and simplicial cohomology with compact support agree.
Moreover, the isomorphism is induced by a map of cochain complexes.
Similarly, if $X$ is a locally finite $CW$-complex, singular cohomology with compact support and cellular cohomology with compact support agree.
However, this isomorphism is not induced by a chain map.
\end{remark}

In order to relate singular cohomology with compact support to sheaf cohomology with compact support, we need to make the following definitions (see \cite[I.7]{Bredon}).

\begin{definition}\label{def: S_c and S_0}
Let $S_c^i(X)$ denote the singular $i$-cochains $\phii$ such that there exists a compact subset $K\subseteq X$ where, for each $x\in X\setminus K$, there is a open neighborhood $W$ of $x$ such that $\phii\mapsto0$ under $C^i(X)\rightarrow C^i(W)$.
Let $S_0^i(X)$ denote the singular $i$-cochains $\phii$ such that, for each $x\in X$, there is an open neighborhood $W$ of $x$ such that $\phii\mapsto0$ under $C^i(X)\rightarrow C^i(W)$.
\end{definition}

The cochains $S_c^i(X)$ (resp. $S_0^i(X)$) are the global sections with compact (resp. trivial) support of the singular cochain presheaf.
It follows from definitions that there is an inclusion $\tensor*[_\sigma]{C}{_c^i}(X)\rightarrow S_c^i(X)$.

\begin{definition}
Let $\sigma=\sum_{j=1}^ma_j\sigma_j\in \tensor*[_\sigma]{C}{_i}(X)$ be a singular chain where $\sigma_j:\Delta^i\rightarrow X$ and $\sigma_j\neq\sigma_{j'}$ when $j\neq j'$.
Then the \emph{support of $\sigma$} is the union of the images of the $\sigma_j$.
We will denote this by $\op{supp}\sigma$.
\end{definition}

We first record a result in Spanier.

\begin{lemma}\label{prop: U small chains}
Let $\mc{U}$ be an open cover of $X$.
Let $\tensor*[_\sigma]{C}{_i}(\mc{U})$ denote the $\mc{U}$-small singular $i$-chains (i.e. combinations of simplices each supported in some $U\in\mc{U}$).
Then the inclusion $\iota:\tensor*[_\sigma]{C}{_i}(\mc{U})\rightarrow\tensor*[_\sigma]{C}{_i}(X)$ is a chain homotopy equivalence.
Moreover, the inverse $g:\tensor*[_\sigma]{C}{_i}(X)\rightarrow\tensor*[_\sigma]{C}{_i}(\mc{U})$ and homotopy $h:\tensor*[_\sigma]{C}{_i}(X)\rightarrow\tensor*[_\sigma]{C}{_{i+1}}(X)$ can be taken such that, for $\sigma\in \tensor*[_\sigma]{C}{_i}(X)$, $\op{supp}g(\sigma)\subseteq\op{supp}\sigma$ and $\op{supp}h(\sigma)\subseteq\op{supp}\sigma$.
\end{lemma}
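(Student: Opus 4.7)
The plan is to follow the classical barycentric subdivision argument (as in \cite[Proposition 2.21]{Hatcher} or \cite{Spanier}), and then observe that the construction is ``local on the domain'', which is what yields the support control.

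First I would recall the two standard natural operators on singular chains: the barycentric subdivision $S\co\tensor*[_\sigma]{C}{_i}(Y)\to\tensor*[_\sigma]{C}{_i}(Y)$ and a chain homotopy $T\co\tensor*[_\sigma]{C}{_i}(Y)\to\tensor*[_\sigma]{C}{_{i+1}}(Y)$ satisfying $\partial T+T\partial=\mathbb{1}-S$. Both are built by first subdividing the standard simplex $\Delta^i$ affinely into a chain in $\tensor*[_\sigma]{C}{_*}(\Delta^i)$, then postcomposing each piece with $\sigma$. Consequently for any singular simplex $\sigma$ one has $\op{supp}(S\sigma)\subseteq\sigma(\Delta^i)=\op{supp}(\sigma)$ and $\op{supp}(T\sigma)\subseteq\op{supp}(\sigma)$. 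Iterating gives operators $S^n$ together with a chain homotopy $T_n=\sum_{k=0}^{n-1} T\circ S^k$ satisfying $\partial T_n+T_n\partial=\mathbb{1}-S^n$, and again $\op{supp}(S^n\sigma),\op{supp}(T_n\sigma)\subseteq\op{supp}(\sigma)$.

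Next I would use the Lebesgue number lemma on the pulled-back cover $\{\sigma^{-1}(U)\}_{U\in\mc{U}}$ of $\Delta^i$: there is a smallest integer $m(\sigma)\geq 0$ such that each simplex appearing in $S^{m(\sigma)}\sigma$ has image contained in some $U\in\mc{U}$. For a face $\tau$ of $\sigma$, the function $m$ satisfies $m(\tau)\leq m(\sigma)$, since more subdivision only helps. Following Hatcher, define
\[ h(\sigma)=T_{m(\sigma)}\sigma,\qquad g(\sigma)=\sigma-\partial h(\sigma)-h(\partial\sigma), \]
and extend linearly. The identity $\partial h+h\partial=\mathbb{1}-g$ then holds by construction, and a direct computation shows that $g$ is a chain map. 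The content of the argument is the verification that $g(\sigma)\in\tensor*[_\sigma]{C}{_i}(\mc{U})$: the term $\sigma-\partial T_{m(\sigma)}\sigma-T_{m(\sigma)}\partial\sigma$ equals $S^{m(\sigma)}\sigma$, which is $\mc{U}$-small by the choice of $m(\sigma)$, and the remaining correction $T_{m(\sigma)}\partial\sigma-h(\partial\sigma)=\sum_j (-1)^j(T_{m(\sigma)}-T_{m(\sigma_j)})\sigma_j$ is a combination of $S^k\sigma_j$'s with $m(\sigma_j)\leq k<m(\sigma)$, hence also $\mc{U}$-small on faces.

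The support claim is then immediate from the locality recorded in the first paragraph: every term in $g(\sigma)$ and in $h(\sigma)$ is obtained by composing an affine chain on $\Delta^i$ (or on a face of $\Delta^i$) with $\sigma$, so both $\op{supp}(g\sigma)$ and $\op{supp}(h\sigma)$ sit inside $\sigma(\Delta^i)=\op{supp}(\sigma)$. The inclusion $\iota$ preserves supports tautologically, and the identity $g\circ\iota=\mathbb{1}$ is verified on a $\mc{U}$-small simplex $\sigma$ by noting $m(\sigma)=0$, so $h(\sigma)=0$ and $g(\sigma)=\sigma$. The main obstacle I anticipate is purely bookkeeping: making sure the definition $g=\mathbb{1}-\partial h-h\partial$ really does land in $\mc{U}$-small chains despite the fact that $m$ is not constant on the faces of a given simplex; this is handled, as indicated, by the telescoping identity $T_a-T_b=\sum_{k=b}^{a-1}T S^k$ combined with the monotonicity $m(\tau)\leq m(\sigma)$ for faces.
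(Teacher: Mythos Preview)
Your argument is correct and is precisely the barycentric-subdivision construction the paper has in mind; the paper itself gives no proof beyond citing \cite[Theorem IV.4.14]{Spanier} and observing that the support control is visible from that construction. One small slip worth fixing: the correction $T_{m(\sigma)}\partial\sigma - h(\partial\sigma)$ is a combination of terms $TS^k\sigma_j$ (not $S^k\sigma_j$) for $m(\sigma_j)\le k<m(\sigma)$, but since $T$ is support-preserving these are still $\mc{U}$-small and your conclusion is unaffected.
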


The first part of this statement is \cite[Theorem IV.4.14]{Spanier}.
The second part comes from the construction given in the proof.

\begin{prop}\label{prop: S_0 acyclic}
The complex $S_0^*(X)$ is acyclic.
\end{prop}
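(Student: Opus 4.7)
The plan is to use Lemma \ref{prop: U small chains} to dualize a chain homotopy to a cochain nullhomotopy, with the key point being that the homotopy $h$ does not enlarge supports and therefore preserves membership in $S_0^*$.

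First I would observe that if $\phi \in S_0^i(X)$, then by definition the collection $\mathcal{U} = \{W_x\}_{x\in X}$ of open neighborhoods witnessing local triviality is an open cover of $X$, and $\phi$ vanishes on every $\mathcal{U}$-small singular simplex (since any such simplex is supported in some $W \in \mathcal{U}$). Writing $\iota \co \tensor*[_\sigma]{C}{_*}(\mathcal{U}) \hookrightarrow \tensor*[_\sigma]{C}{_*}(X)$, this means $\iota^* \phi = 0$.

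Next I would apply Lemma \ref{prop: U small chains} to obtain a chain map $g \co \tensor*[_\sigma]{C}{_*}(X) \to \tensor*[_\sigma]{C}{_*}(\mathcal{U})$ and a chain homotopy $h \co \tensor*[_\sigma]{C}{_i}(X) \to \tensor*[_\sigma]{C}{_{i+1}}(X)$ satisfying $\iota g - \op{id} = \partial h + h\partial$, where both $g$ and $h$ preserve supports. Dualizing yields $g^*\iota^* - \op{id} = \delta h^* + h^*\delta$. Applied to a cocycle $\phi \in S_0^i(X)$ with $i \geq 1$, and using $\iota^*\phi = 0$ and $\delta\phi = 0$, this gives $\phi = -\delta(h^*\phi)$. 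It remains to verify that $h^*\phi \in S_0^{i-1}(X)$: given $x \in X$, choose the witnessing neighborhood $W$ for $\phi$; then for any $(i-1)$-chain $\tau$ supported in $W$, the support-preservation of $h$ gives $\op{supp}(h\tau) \subseteq W$, so $h\tau \in C_i(W)$ and hence $(h^*\phi)(\tau) = \phi(h\tau) = 0$. Thus $h^*\phi$ is locally zero, so $\phi$ is a coboundary in $S_0^*(X)$.

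For the base case $i = 0$, I would note that $S_0^0(X)$ is itself trivial: a locally zero $0$-cochain must vanish on every point of $X$, since every point lies in some $W$ on which $\phi$ restricts to zero on all $0$-simplices (i.e.\ all points of $W$). Hence $H^0(S_0^*) = 0$ as well, completing the proof.

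The only subtlety is ensuring the homotopy one invokes from Lemma \ref{prop: U small chains} actually satisfies the support-preservation property $\op{supp}\, h(\sigma) \subseteq \op{supp}(\sigma)$, since this is exactly what lets locality pass through $h^*$; the statement of the lemma records precisely this, and Spanier's construction via iterated barycentric subdivision produces such an $h$, so there is no obstacle.
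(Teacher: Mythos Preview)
The proposal is correct. The paper itself states this proposition without proof, but your argument is precisely the one implicit in the paper: the proof of the subsequent Proposition~\ref{prop: singular, sheaf cohom} applies the identical technique (choosing a cover $\mathcal U$ on which the cochain vanishes, dualizing the chain homotopy from Lemma~\ref{prop: U small chains}, and using support-preservation of $h$ to stay in the relevant subcomplex) to the complex $E^*$, and the same reasoning verbatim establishes acyclicity of $S_0^*$.
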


\begin{prop}\label{prop: singular, sheaf cohom}
Suppose $X$ is locally compact.
Then, the inclusion $\tensor*[_\sigma]{C}{_c^i}(X)\rightarrow S_c^i(X)$ induces an isomorphism on cohomology groups.
\end{prop}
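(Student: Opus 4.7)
The plan is to use the support-preserving chain homotopy of Lemma \ref{prop: U small chains}, tailored to each cochain, to replace $S_c^*$-cocycles and $S_c^*$-coboundary relations by ones living in $\tensor*[_\sigma]{C}{_c^*}(X)$.  Given a cochain $\alpha \in S_c^i(X)$ with associated compact set $K$, local compactness of $X$ supplies an open $U \supset K$ with compact closure $\bar{U}$, and by definition of $S_c^*$ each $x \in X \setminus K$ has an open neighborhood $W_x \subseteq X \setminus K$ on which $\alpha$ restricts to the zero cochain.  Setting $\mc{U}_\alpha = \{U\} \cup \{W_x : x \in X \setminus K\}$ yields an open cover of $X$, so Lemma \ref{prop: U small chains} produces a chain map $g \co \tensor*[_\sigma]{C}{_*}(X) \to \tensor*[_\sigma]{C}{_*}(\mc{U}_\alpha)$ and a chain homotopy $h \co \tensor*[_\sigma]{C}{_*}(X) \to \tensor*[_\sigma]{C}{_{*+1}}(X)$ between $\iota \circ g$ and the identity, with $\supp(g(\sigma))$ and $\supp(h(\sigma))$ contained in $\supp(\sigma)$.

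The key observation is that if a singular simplex $\sigma$ has image in $X \setminus \bar{U}$, then each simplex appearing in $g(\sigma)$ or $h(\sigma)$ has image in $X \setminus \bar{U}$ and is $\mc{U}_\alpha$-small; since $U \cap (X \setminus \bar{U}) = \emptyset$, it must lie inside some $W_x$, where $\alpha$ vanishes.  Hence $\alpha \circ g \in \tensor*[_\sigma]{C}{_c^i}(X)$ (with support in $\bar{U}$) and $\alpha \circ h \in S_c^{i-1}(X)$; moreover, if $\alpha \in \tensor*[_\sigma]{C}{_c^i}(X)$ has compact support $L$, then support-preservation of $h$ forces $\alpha \circ h$ to vanish on simplices whose image misses $L$, giving $\alpha \circ h \in \tensor*[_\sigma]{C}{_c^{i-1}}(X)$.

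Both halves of the isomorphism then drop out of the dualized chain-homotopy identity $\alpha - \alpha \circ g = \delta(\alpha \circ h) + (\delta \alpha) \circ h$.  For surjectivity, any cocycle $\phi \in S_c^i(X)$ satisfies $\phi - \phi \circ g = \delta(\phi \circ h)$ with $\phi \circ g \in \tensor*[_\sigma]{C}{_c^i}(X)$ a cocycle and $\phi \circ h \in S_c^{i-1}(X)$, so $[\phi] = [\phi \circ g]$ in $H^i(S_c^*(X))$.  For injectivity, if $\phi \in \tensor*[_\sigma]{C}{_c^i}(X)$ is a cocycle equal to $\delta \psi$ for some $\psi \in S_c^{i-1}(X)$, then applying the construction to $\psi$ (not to $\phi$) yields $\psi \circ g \in \tensor*[_\sigma]{C}{_c^{i-1}}(X)$ and $\psi - \psi \circ g = \delta(\psi \circ h) + \phi \circ h$; taking $\delta$ gives $\phi = \delta(\psi \circ g) + \delta(\phi \circ h) = \delta(\psi \circ g + \phi \circ h)$, with both summands in $\tensor*[_\sigma]{C}{_c^{i-1}}(X)$.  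The main obstacle is the bookkeeping in the key observation: one must arrange the cover $\mc{U}_\alpha$ (by taking $U$ with compact closure containing $K$) so that any $\mc{U}_\alpha$-small simplex supported outside $\bar{U}$ is forced into some $W_x$, and confirm that support-preservation of $g$ and $h$ translates into honest compact support for $\alpha \circ g$ and for $\alpha \circ h$ whenever $\alpha$ already has compact support.
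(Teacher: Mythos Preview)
Your argument is correct, with one small slip in the ``key observation'': Lemma~\ref{prop: U small chains} guarantees that the simplices of $g(\sigma)$ are $\mc{U}_\alpha$--small, but it does \emph{not} guarantee this for the simplices appearing in $h(\sigma)$; only support--preservation is asserted for $h$.  Fortunately you never actually need $\mc{U}_\alpha$--smallness for $h(\sigma)$.  The conclusion $\alpha\circ h\in S_c^{i-1}(X)$ follows directly from support--preservation: for $x\in X\setminus K$ and $\sigma$ a simplex with image in $W_x$, every simplex of $h(\sigma)$ also has image in $W_x$, where $\alpha$ vanishes.  Likewise, in the injectivity step the containment $\phi\circ h\in\tensor*[_\sigma]{C}{_c^{i-1}}(X)$ uses only that $h$ preserves supports and that $\phi$ already has compact support, exactly as you say.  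So the argument stands once the overreach about $h(\sigma)$ is dropped.

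Your route differs from the paper's.  The paper argues indirectly: it passes to the cokernel $D^*$ of $\tensor*[_\sigma]{C}{_c^*}(X)\hookrightarrow S_c^*(X)$, observes that $S_0^*(X)$ (cochains locally zero everywhere) surjects onto $D^*$, and then reduces to showing that the kernel $E^*=S_0^*(X)\cap\tensor*[_\sigma]{C}{_c^*}(X)$ is acyclic, invoking the separately stated acyclicity of $S_0^*(X)$ (Proposition~\ref{prop: S_0 acyclic}).  For a cocycle $\phi\in E^i$ the vanishing neighborhoods alone cover $X$, so $\iota^*\phi=0$ and the dual homotopy immediately gives $\phi=\delta h^*\phi$.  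Your approach instead attacks surjectivity and injectivity of the map on cohomology head--on, manufacturing for each cochain a cover that includes one relatively compact set $U\supset K$ so that $\alpha\circ g$ is genuinely compactly supported.  The trade--off: the paper's proof is shorter once Proposition~\ref{prop: S_0 acyclic} is in hand and avoids the extra bookkeeping with $\bar U$, while your argument is self--contained (it does not appeal to the acyclicity of $S_0^*$) and makes the role of local compactness more visible, since that hypothesis is exactly what produces the relatively compact $U$.
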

\begin{proof}
Let $D^i$ denote the cokernel of $\tensor*[_\sigma]{C}{_c^i}(X)\rightarrow S_c^i(X)$.
If the cochain complex $D^i$ is acyclic, then the proposition follows.
Note that the submodule $S_0^i(X)$ surjects to $D^i$ under the map $S_c^i(X)\rightarrow D^i$.
Let $E^i$ be the kernel of the map $S_0^i(X)\rightarrow D^i$.
Since $S_0^i(X)$ is acyclic, it suffices to show that $E^i$ is acyclic.
$E^i$ consists of the cochains $\phii\in S_0^i(X)$ such that there is a compact subset $K\subseteq X$ with $\phii(\sigma)=0$ for all $\sigma$ whose image is in $X\setminus K$.

Suppose $\phii\in E^i$ is a cocycle.
Then, we may consider $\phii$ as a cocycle in $S_0^i(X)$.
There is a compact set $K$ such that $\phii(\sigma)=0$ whenever the $\op{supp}(\sigma)\subseteq X\setminus K$.
There is also an open cover $\mc{U}$ of $X$ such that $\phii(\sigma)=0$ for all $\sigma\notin C_{i,\sigma}(\mc{U})$.
Let $\iota,g$ and $h$ be as in Proposition \ref{prop: U small chains} and let $\iota^*,g^*$ and $h^*$ denote their duals on cochains.
Then, $h\partial+\partial h=\op{Id}-\iota\circ g$ so $\delta h^*+h^*\delta=\op{Id}-g^*\circ\iota^*$.
Since $\phii$ is a cocycle and $\iota^*(\phii)=0$, we obtain $\delta h^*(\phii)=\phii$.
We claim that $h^*(\phii)\in E^{i-1}$.
Indeed, if $\sigma$ is a singular $(i-1)$-simplex with $\op{supp}(\sigma)\subseteq X\setminus K$, then $\op{supp}(h(\sigma))\subseteq X\setminus K$.
Therefore, $\phii(h(\sigma))=(h^*(\phii))(\sigma)=0$.
This implies that $E^i$ is acyclic.
\end{proof}

We can now prove the following theorem.

\begin{restatable}{theorem}{LEScohomwithsupp}\label{thm: LES cohom with compact support}
Suppose $X$ is locally compact, locally contractible and Hausdorff.
Let $F\subseteq X$ be a closed, locally contractible subset such that $U:=X\setminus F$ is also locally contractible.
Then, there is the following long exact sequence.
\[
\cdots\rightarrow\tensor*[_\sigma]{H}{_c^i}(U)\rightarrow \tensor*[_\sigma]{H}{_c^i}(X)\rightarrow\tensor*[_\sigma]{H}{_c^i}(F)\rightarrow\tensor*[_\sigma]{H}{_c^{i+1}}(U)\rightarrow\cdots
\]
Moreover, if $X$ is a simplicial complex and $F$ is a subcomplex, then there is the following long exact sequence.
\[
\cdots\rightarrow \tensor*[_\sigma]{H}{_c^i}(U)\rightarrow \tensor*[_\Delta]{H}{_c^i}(X)\rightarrow\tensor*[_\Delta]{H}{_c^i}(F)\rightarrow\tensor*[_\sigma]{H}{_c^{i+1}}(U)\rightarrow\cdots
\]
\end{restatable}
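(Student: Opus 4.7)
The plan is to derive the singular form of the long exact sequence from sheaf cohomology, then bootstrap to the simplicial form by a comparison argument.

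For the singular version, apply the standard long exact sequence in sheaf cohomology with compact supports for the closed subspace $F \subseteq X$ with open complement $U$: since $X$ is locally compact and Hausdorff, \cite[II.10]{Bredon} provides
\[\cdots \to H^i_{c,\mathrm{sh}}(U;A) \to H^i_{c,\mathrm{sh}}(X;A) \to H^i_{c,\mathrm{sh}}(F;A) \to H^{i+1}_{c,\mathrm{sh}}(U;A) \to \cdots,\]
where $H^*_{c,\mathrm{sh}}$ denotes sheaf cohomology of the constant sheaf $A$ with compact supports. Each of $X$, $F$, and $U$ is locally compact Hausdorff and locally contractible, so the sheafification $\mathcal{S}^*$ of the singular cochain presheaf is a soft (hence $\Gamma_c$-acyclic) resolution of the constant sheaf, and $H^*_{c,\mathrm{sh}}$ is computed by $H^*\bigl(\Gamma_c(\mathcal{S}^*)\bigr)$. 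The natural map $S_c^*(-) \to \Gamma_c(\mathcal{S}^*)$ has kernel contained in $S_0^*$, which is acyclic by Proposition \ref{prop: S_0 acyclic}, so it is a quasi-isomorphism. Combined with Proposition \ref{prop: singular, sheaf cohom}, which identifies $H^*(S_c^*(-))$ with $\tensor*[_\sigma]{H}{_c^*}(-)$, this yields the first long exact sequence.

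For the simplicial version, assume $X$ is a simplicial complex and $F$ is a subcomplex, with $U = X \setminus F$ open. Restriction of simplicial cochains gives a surjection $\tensor*[_\Delta]{C}{_c^*}(X) \twoheadrightarrow \tensor*[_\Delta]{C}{_c^*}(F)$, since any finitely supported simplicial cochain on $F$ extends by zero to $X$. Let $K^*$ denote the kernel. I would then compare this to the analogous singular short exact sequence:
\[
\begin{tikzcd}[column sep=small]
0 \arrow{r} & K^* \arrow{r} \arrow{d} & \tensor*[_\Delta]{C}{_c^*}(X) \arrow{r} \arrow{d} & \tensor*[_\Delta]{C}{_c^*}(F) \arrow{r} \arrow{d} & 0 \\
0 \arrow{r} & \tensor*[_\sigma]{C}{_c^*}(X,F) \arrow{r} & \tensor*[_\sigma]{C}{_c^*}(X) \arrow{r} & \tensor*[_\sigma]{C}{_c^*}(F) \arrow{r} & 0
\end{tikzcd}
\]
where $\tensor*[_\sigma]{C}{_c^*}(X,F)$ is the kernel of restriction on singular cochains with compact support. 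The middle and right vertical maps induce isomorphisms on cohomology because simplicial and singular cohomology with compact supports agree on locally finite simplicial complexes. The five lemma applied to the two cohomology LES then identifies $H^*(K^*)$ with $H^*(\tensor*[_\sigma]{C}{_c^*}(X,F))$. A subdivision/excision argument, in the spirit of Proposition \ref{prop: U small chains}, identifies $H^*(\tensor*[_\sigma]{C}{_c^*}(X,F))$ with $\tensor*[_\sigma]{H}{_c^*}(U)$; alternatively, one can read this identification directly from the singular LES of Step 1 applied to the pair $(X,F)$.

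The main obstacle is the sheaf-theoretic step: establishing that the sheafified singular cochain complex $\mathcal{S}^*$ is a soft resolution of $A$, and that $\Gamma_c(\mathcal{S}^*)$ agrees up to acyclic error with $S_c^*$, is somewhat delicate and is where local compactness and local contractibility of each of $X$, $F$, $U$ are used essentially. The remainder of the argument is then formal — a pair of short exact sequences of cochain complexes and one application of the five lemma — once the identification $H^*(\tensor*[_\sigma]{C}{_c^*}(X,F)) \cong \tensor*[_\sigma]{H}{_c^*}(U)$ is in hand.
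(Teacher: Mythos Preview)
Your approach is correct and uses the same ingredients as the paper, but the organization differs in a way worth noting. The paper works at the cochain level throughout: it writes down the short exact sequence $0 \to {}_\sigma C_c^*(X,F) \to {}_\sigma C_c^*(X) \to {}_\sigma C_c^*(F) \to 0$, compares it via a commutative diagram to the analogous sequence $0 \to S_c^*(X,F) \to S_c^*(X) \to S_c^*(F) \to 0$, applies Proposition~\ref{prop: singular, sheaf cohom} and the five lemma to identify $H^*({}_\sigma C_c^*(X,F)) \cong H^*(S_c^*(X,F))$, and then observes that the bottom row computes sheaf cohomology, so by \cite[II.12.3]{Bredon} the relative term is $H_c^*(U)$ (sheaf), which equals ${}_\sigma H_c^*(U)$ by local contractibility of $U$. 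Your Step~1 instead imports the sheaf LES as a black box and identifies each of the three terms individually with singular cohomology; this is valid but tacitly requires that the three separate identifications ${}_\sigma H_c^* \cong H_{c,\mathrm{sh}}^*$ (for $U$, $X$, $F$) commute with all maps in the sequence, including the connecting homomorphism. The paper's chain-level comparison sidesteps that naturality check entirely. Your Step~2, the simplicial-versus-singular comparison via a ladder of short exact sequences and the five lemma, is exactly what the paper does (with $S_c^*$ in place of simplicial). One small gap: asserting that $S_c^* \to \Gamma_c(\mathcal{S}^*)$ is a quasi-isomorphism merely because its kernel lies in the acyclic $S_0^*$ is not quite enough---you also need the cokernel to be handled, which is where the softness of $\mathcal{S}^*$ and paracompactness enter.
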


\begin{proof}
For a closed subspace $F$, there is a surjection $\tensor*[_\sigma]{C}{_c^i}(X)\rightarrow\tensor*[_\sigma]{C}{_c^i}(F)$.
Let $\tensor*[_\sigma]{C}{_c^i}(X,F)$ denote the kernel of this map.
Similarly, there is a surjection $S_c^i(X)\rightarrow S_c^i(F)$ and we let $S_c^i(X,F)$ denote the kernel.
This gives the following commuting diagram with exact rows.
\begin{equation}\label{diag: singular to sheaf}
\begin{tikzcd}
0\arrow{r}&\tensor*[_\sigma]{C}{_c^i}(X,F)\arrow{r}\arrow{d}&\tensor*[_\sigma]{C}{_c^i}(X)\arrow{r}\arrow{d}&\tensor*[_\sigma]{C}{_c^i}(F)\arrow{r}\arrow{d}&0\\
0\arrow{r}&S_c^i(X,F)\arrow{r}&S_c^i(X)\arrow{r}&S_c^i(F)\arrow{r}&0
\end{tikzcd}
\end{equation}
The middle and right vertical maps in  \eqref{diag: singular to sheaf} are the inclusions of cochains and the left vertical map exists by exactness.
By Proposition \ref{prop: singular, sheaf cohom} the middle and right vertical maps induce isomorphisms on cohomology.
Thus, $\tensor*[_\sigma]{C}{_c^i}(X,F)\rightarrow S_c^i(X,F)$ induces an isomorphism on cohomology.

Since $X$ and $F$ are locally contractible and locally compact, the bottom row of Diagram \ref{diag: singular to sheaf} computes sheaf cohomology (see \cite[III.1]{Bredon}).
So, the cohomology of the complex $\tensor*[_\sigma]{C}{_c^i}(X,F)$ is isomorphic to the relative sheaf cohomology with compact support of the pair $(X,F)$ with coefficients in the constant sheaf.
By \cite[Proposition II.12.3]{Bredon}, this is isomorphic to $H_c^i(U)$.
Since $U$ is locally contractible, this is isomorphic to singular cohomology with compact support.
\end{proof}

We will need the following consequence of the proof of Theorem \ref{thm: LES cohom with compact support}.

\begin{restatable}{theorem}{RelCochainsWithSupport}\label{thm: rel cochains with support}
Let $X,F$ and $U$ be as in Theorem \ref{thm: LES cohom with compact support}.
Then the $i$-th cohomology of $\tensor*[_\sigma]{C}{_c^*}(X,F)$ is $\tensor*[_\sigma]{H}{_c^i}(U)$.
If $X$ is a simplicial complex and $F$ is a subcomplex then the $i$-th cohomology of $\tensor*[_\Delta]{C}{_c^*}(X,F)$ is $\tensor*[_\sigma]{H}{_c^i}(U)$
\end{restatable}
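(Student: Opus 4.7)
The proof is essentially a matter of extracting and packaging what has already been established, then performing a small extra comparison between simplicial and singular cochains. I would organize it as follows.

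First, observe that the singular statement is already contained in the proof of Theorem \ref{thm: LES cohom with compact support}. That proof constructs the commutative diagram \eqref{diag: singular to sheaf} of short exact sequences and, using Proposition \ref{prop: singular, sheaf cohom}, shows that the middle and rightmost vertical maps induce isomorphisms on cohomology. By the five lemma, the induced map $\tensor*[_\sigma]{C}{_c^*}(X,F)\to S_c^*(X,F)$ is also a quasi-isomorphism. Next, \cite[III.1]{Bredon} identifies the cohomology of $S_c^*(X,F)$ with relative sheaf cohomology with compact support, and \cite[II.12.3]{Bredon} identifies the latter with $H_c^*(U)$ (in the constant sheaf). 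Finally, since $U$ is locally contractible, sheaf and singular compactly supported cohomology of $U$ agree, yielding the first claim.

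For the simplicial case, the plan is to compare the simplicial and singular short exact sequences via the natural inclusion of cochain complexes. Concretely, consider the diagram
\[
\begin{tikzcd}
0\arrow{r}&\tensor*[_\Delta]{C}{_c^*}(X,F)\arrow{r}\arrow{d}&\tensor*[_\Delta]{C}{_c^*}(X)\arrow{r}\arrow{d}&\tensor*[_\Delta]{C}{_c^*}(F)\arrow{r}\arrow{d}&0\\
0\arrow{r}&\tensor*[_\sigma]{C}{_c^*}(X,F)\arrow{r}&\tensor*[_\sigma]{C}{_c^*}(X)\arrow{r}&\tensor*[_\sigma]{C}{_c^*}(F)\arrow{r}&0
\end{tikzcd}
\]
in which the middle and right vertical maps are the natural inclusions (which exist because a simplicial cochain with compact support defines a singular cochain with compact support), and the left vertical map is induced by exactness. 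The top row is exact because $F$ is a subcomplex, so every simplicial simplex of $F$ extends to a simplicial simplex of $X$. As noted in the remark following Definition \ref{def: singular cohom with compact support}, on any locally finite simplicial complex the inclusion of simplicial compactly supported cochains into singular compactly supported cochains is a quasi-isomorphism; so the middle and right vertical maps are quasi-isomorphisms. The five lemma applied to the induced maps on long exact sequences in cohomology then shows that $\tensor*[_\Delta]{C}{_c^*}(X,F)\to \tensor*[_\sigma]{C}{_c^*}(X,F)$ is a quasi-isomorphism. Combined with the first paragraph, this gives the second claim.

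The only step that is not purely formal is the quasi-isomorphism between simplicial and singular compactly supported cochains on a locally finite simplicial complex. The standard argument uses the subdivision chain homotopy together with a direct limit over finite subcomplexes: for each finite subcomplex $K\subseteq X$ the simplicial-to-singular comparison of $C^*(K)$ and $\tensor[_\sigma]{C}{^*}(K)$ is a quasi-isomorphism with homotopies having support control, and one exhausts $X$ by such $K$ and uses that compactly supported cochains form the colimit of cochains vanishing outside $K$. This is where one needs local finiteness (to ensure that the dual notion of support behaves well) and Hausdorffness (so that compact sets lie in finite subcomplexes). I expect this step to be the main technical obstacle, though it is standard and closely parallels the argument given in Proposition \ref{prop: singular, sheaf cohom}.
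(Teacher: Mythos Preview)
Your approach is essentially the same as the paper's, which does not give a separate proof but simply records the statement as ``a consequence of the proof of Theorem \ref{thm: LES cohom with compact support}.'' Your first paragraph extracts exactly that consequence, and your second paragraph supplies the comparison with the simplicial case that the paper leaves implicit.

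One small technical correction: the natural chain map between simplicial and singular compactly supported cochains goes the other way. The inclusion of simplicial chains into singular chains dualizes to a \emph{restriction} map $\tensor*[_\sigma]{C}{_c^*}(X)\to\tensor*[_\Delta]{C}{_c^*}(X)$, not an inclusion in the direction you wrote. (Local finiteness ensures this restriction lands in compactly supported simplicial cochains.) This does not affect your argument: simply reverse the vertical arrows in your diagram, and the five-lemma step goes through unchanged. With that fix, your proof is correct and matches the paper's intent.
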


We will also need the following corollary of Theorem \ref{thm: LES cohom with compact support}.

\begin{cor}\label{cor: cohom of product with half open interval}
Suppose $X$ is locally compact, locally contractible and Hausdorff.
Then \[\tensor*[_\sigma]{H}{_c^*}\left([0,1)\times X\right)=0.\]
\end{cor}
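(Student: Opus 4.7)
The plan is to apply the long exact sequence of Theorem~\ref{thm: LES cohom with compact support} with ambient space $[0,1]\times X$, closed subspace $F = \{1\}\times X$, and open complement $U = [0,1)\times X$. Since $X$ is locally compact, locally contractible and Hausdorff, the product $[0,1]\times X$ inherits all three properties, and both $F$ and $U$ are locally contractible. The resulting sequence reads
\[
\cdots\to \tensor*[_\sigma]{H}{_c^i}([0,1)\times X)\to \tensor*[_\sigma]{H}{_c^i}([0,1]\times X)\xrightarrow{\iota^*}\tensor*[_\sigma]{H}{_c^i}(\{1\}\times X)\to \tensor*[_\sigma]{H}{_c^{i+1}}([0,1)\times X)\to\cdots,
\]
where, per the construction in the proof of Theorem~\ref{thm: LES cohom with compact support}, the middle map is restriction of compactly supported cochains along the closed inclusion $\iota\co\{1\}\times X\hookrightarrow[0,1]\times X$. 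It suffices to prove that $\iota^*$ is an isomorphism in every degree, for then exactness forces $\tensor*[_\sigma]{H}{_c^i}([0,1)\times X) = 0$ for all $i$, as claimed.

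To show $\iota^*$ is an isomorphism I introduce the retraction $r\co[0,1]\times X\to \{1\}\times X$, $(t,x)\mapsto(1,x)$; it is proper because $[0,1]$ is compact, so it induces $r^*$ on compactly supported singular cohomology. From $r\circ\iota = \mathrm{id}_{\{1\}\times X}$ I get $\iota^*\circ r^* = \mathrm{id}$ immediately. For the other composition, consider the straight-line homotopy
\[
H\co([0,1]\times X)\times[0,1]\to[0,1]\times X,\qquad H((t,x),s) = (1 - s(1-t),\,x),
\]
which satisfies $H(-,0)=\iota\circ r$ and $H(-,1) = \mathrm{id}$; since $[0,1]$ is compact, $H$ is proper. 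The standard prism chain homotopy built from $H$ preserves compact supports because for a cochain $\phi$ with compact support $K$ the associated prism cochain vanishes outside the projection of $H^{-1}(K)$ to $[0,1]\times X$, which is compact. This gives a cochain homotopy on $\tensor*[_\sigma]{C}{_c^*}([0,1]\times X)$ between $(\iota\circ r)^*$ and $\mathrm{id}^*$, so $r^*\circ\iota^* = \mathrm{id}$ on $\tensor*[_\sigma]{H}{_c^*}([0,1]\times X)$ and $\iota^*$ is an isomorphism.

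The only delicate point is confirming that the prism chain homotopy respects the compact-support condition — this is essentially automatic because the direction being integrated over is $[0,1]$, which is compact, but it must be stated carefully to mesh with Definition~\ref{def: singular cohom with compact support}. An alternative shortcut, avoiding the prism bookkeeping entirely, is to invoke a Künneth-type identification $\tensor*[_\sigma]{H}{_c^n}([0,1]\times X)\cong\tensor*[_\sigma]{H}{_c^n}(X)$ (valid because $[0,1]$ is compact and contractible), under which $\iota^*$ is manifestly the identity and the same conclusion follows.
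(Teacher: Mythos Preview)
Your proposal is correct and follows essentially the same approach as the paper: apply the long exact sequence of Theorem~\ref{thm: LES cohom with compact support} to the pair $([0,1]\times X,\{1\}\times X)$ and observe that the inclusion $\{1\}\times X\hookrightarrow[0,1]\times X$ is a proper homotopy equivalence. The paper's proof simply asserts the proper homotopy equivalence in one sentence, whereas you have written out the explicit retraction, the straight-line homotopy, and the verification that the prism chain homotopy preserves compact supports---so your version is a fleshed-out form of the same argument.
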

\begin{proof}
The inclusion $\{1\}\times X\rightarrow [0,1]\times X$ is a proper homotopy equivalence so it induces isomorphisms on cohomology with compact support.
The result follows from the long exact sequence of Theorem \ref{thm: LES cohom with compact support}.
\end{proof}

\end{document}